\documentclass[11pt,b5paper,notitlepage]{article}
\usepackage[b5paper, margin={0.5in,0.65in}]{geometry}
\usepackage{amsmath,amssymb,amsthm,amsfonts}
\usepackage{mathtools,array}
\usepackage{mathrsfs,indentfirst,mathabx}
\usepackage{palatino}

\let\OLDthebibliography\thebibliography
\renewcommand\thebibliography[1]{%
  \OLDthebibliography{#1}%
  \setlength{\parskip}{0pt}%
  \setlength{\itemsep}{2pt}%
}

\allowdisplaybreaks
\usepackage[colorlinks,linkcolor=blue,anchorcolor=blue]{hyperref}
\hypersetup{
citecolor=[rgb]{0,0.5,0},
pdftitle={Twisted Jacquet modules and induction from Speh representations},
pdfauthor={Alan Xuelun Hou}
}
\theoremstyle{plain}
\newtheorem{theorem}{Theorem}[section]
\newtheorem{lemma}[theorem]{Lemma}
\newtheorem{proposition}[theorem]{Proposition}
\newtheorem{corollary}[theorem]{Corollary}
\newtheorem{maintheorem}{Theorem}

\theoremstyle{definition}
\newtheorem{remark}[theorem]{Remark}

\numberwithin{equation}{section}

\DeclareMathOperator{\Sp}{Sp}
\DeclareMathOperator{\SO}{SO}
\DeclareMathOperator{\GL}{GL}
\DeclareMathOperator{\SL}{SL}
\DeclareMathOperator{\Mat}{Mat}
\DeclareMathOperator{\Ind}{Ind}
\DeclareMathOperator{\ind}{ind}
\DeclareMathOperator{\WF}{WF}
\DeclareMathOperator{\Hom}{Hom}
\DeclareMathOperator{\diag}{diag}
\DeclareMathOperator{\tr}{tr}
\DeclareMathOperator{\Lie}{Lie}
\newcommand{\calE}{\mathcal{E}}
\newcommand{\calH}{\mathcal{H}}
\newcommand{\bbC}{\mathbb{C}}
\newcommand{\bfS}{\mathbf{S}}
\newcommand{\one}{\mathbf{1}}

\title{Twisted Jacquet modules and induction from Speh representations}
\author{\scshape Alan Xuelun Hou}
\date{}

\begin{document}\sloppy
\pagenumbering{arabic}

\maketitle
\begin{abstract}
Fourier coefficients of Eisenstein series induced from Speh representations are the kernels of the generalized doubling method of Cai, Friedberg, Ginzburg and Kaplan. Ginzburg and Soudry integrated these coefficients against cusp forms to obtain Eisenstein series induced from shorter Speh representations and arbitrary cuspidal data. We determine the corresponding local modules over a non-archimedean local field $F$ of characteristic zero. Let $q_F$ be the cardinality of its residue field. Let $J_s$ be the twisted Jacquet module of the representation induced from $\Delta(\tau,m+i)|\det|^s$. It carries an action of $G\times H$, where $G$ and $H$ are split symplectic or special orthogonal groups. Let $\sigma$ be an admissible representation of $G$ of finite length. We prove that $(J_s\otimes\sigma)_G$ is induced from $\Delta(\tau,i)|\det|^s$ and a fixed conjugate of $\sigma$, outside a finite set of values of $q_F^{-s}$ depending on $\tau$ and $\sigma$. No genericity assumption on $\sigma$ is needed. For irreducible $\sigma$, this determines all irreducible quotients of $J_s$ of the form $\sigma^\vee\boxtimes\pi$. The proof computes every orbit contribution, including those that vanish globally by cuspidality. It also gives an explicit finite set containing the exceptional parameters. This set is empty for irreducible supercuspidal $\sigma$ unless $G$ is the split group $\SO_2$. We prove the analogous result for the symplectic double cover. Over finite fields of large characteristic, we give the complete decomposition of the twisted Jacquet module when $\tau$ is cuspidal on $\GL_n$ and $n$ is larger than the rank of $G$.
\end{abstract}

\section{Introduction}

Twisted Jacquet modules of representations induced from Speh representations occur in the local theory of the generalized doubling method of Cai, Friedberg, Ginzburg and Kaplan \cite{CFGK19,CFK22}. Let $G$ be a split classical group. This method gives integral representations of the tensor product $L$-functions of $G\times\GL_n$ for every cuspidal representation of $G$, without a genericity assumption. The local theory defines the corresponding $\gamma$, $\epsilon$ and $L$ factors for every irreducible representation of $G$ \cite{CFK22}. The required uniqueness theorem is due to Gourevitch and Kaplan \cite{GK23}. In the doubling setting, a twisted Jacquet module of a representation induced from a Speh representation is a representation of $G\times G$. Let $\sigma$ and $\pi$ be irreducible representations of $G$. Gourevitch and Kaplan proved that, outside a discrete set of parameters, a nonzero equivariant map from this module to $\sigma^\vee\boxtimes\pi$ exists only if $\pi$ is isomorphic to a fixed outer conjugate $\sigma^\iota$ of $\sigma$. Such a map is unique up to scalars.

Ginzburg and Soudry \cite{GS21} showed that these modules form the first level of a tower. Let $\mathbb A$ be the ring of adeles of a number field $K$. Fix integers $n,m\ge1$ and $i\ge0$. Let $\calH_r$ denote either the symplectic group $\Sp_r$, with $r$ even, or the split special orthogonal group $\SO_r$. In the symplectic case, assume that $m$ is even. Put $N=n(m+i)$, $\bfS=\calH_{2N}$, $G=\calH_m$ and $H=\calH_{2ni+m}$. Let $\tau$ be an irreducible cuspidal automorphic representation of $\GL_n(\mathbb A)$. Write $\Delta(\tau,m+i)$ for the corresponding Speh representation of $\GL_N(\mathbb A)$.

Let $E(f_s)$ be the Eisenstein series on $\bfS(\mathbb A)$ attached to a section $f_s$ of the representation induced from $\Delta(\tau,m+i)|\det|^s$ on the Siegel parabolic subgroup. Let $(U,\psi_U)$ be the unipotent subgroup and character attached to the nilpotent orbit with partition $((2n-1)^m,1^{2ni+m})$. The stabilizer of $\psi_U$ contains $G\times H$. The Fourier coefficient $E^{U,\psi_U}(f_s)$ is therefore an automorphic function on $G(\mathbb A)\times H(\mathbb A)$. Let $\sigma$ be an irreducible cuspidal automorphic representation of $G(\mathbb A)$. Ginzburg and Soudry \cite[Theorem~A]{GS21} proved that, for $\varphi_\sigma$ in the space of $\sigma$,
\begin{equation}\label{GS}
  \int_{G(K)\backslash G(\mathbb A)} E^{U,\psi_U}(f_s)(g,h)\,\varphi_\sigma(g)\,dg =E\bigl(\Lambda(f_s,\varphi_\sigma)\bigr)(h).
\end{equation}
The right hand side is the Eisenstein series on $H(\mathbb A)$ attached to an explicit section $\Lambda(f_s,\varphi_\sigma)$ of the representation induced from $\Delta(\tau,i)|\det|^s\otimes\sigma^\iota$ on the standard parabolic subgroup with Levi subgroup $\GL_{ni}\times G$. For $i=0$ we have $H=G$. In this case, the proof of (\ref{GS}) reduces to the unfolding of the generalized doubling integrals. For $n=1$, the Speh representation is the character $\tau\circ\det$. Then (\ref{GS}) is a pullback formula for Siegel Eisenstein series; see the introduction of \cite{GS21} for its history. Ginzburg and Soudry compare the tower in $i$ to towers of theta lifts, and they propose applications to the poles of Eisenstein series and to the unipotent orbits supported by their residues; see also \cite{GS22,GS24}.

Over a non-archimedean local field, let $J_s$ be the twisted Jacquet module of the representation induced from $\Delta(\tau,m+i)|\det|^s$ along $(U,\psi_U)$. It is a representation of $G\times H$. Given an irreducible representation $\sigma$ of $G$, for which irreducible representations $\pi$ of $H$ is $\sigma^\vee\boxtimes\pi$ a quotient of $J_s$? More precisely, which $H$-module occurs in the maximal $\sigma^\vee$-isotypic quotient of $J_s$? The global identity (\ref{GS}) assumes that $\sigma$ is cuspidal. In its unfolding, every double coset other than the open one contributes a constant term of $\varphi_\sigma$ along a proper parabolic subgroup of $G$. Cuspidality makes these terms vanish \cite[Corollary~3.3]{GS21}. Locally, the corresponding terms involve Jacquet modules of $\sigma$ and are in general nonzero.

We compute $(J_s\otimes\sigma)_G$ for every admissible representation $\sigma$ of $G$ of finite length. For all but finitely many values of $q_F^{-s}$, where $q_F$ is the residue field cardinality, these coinvariants are induced from $\Delta(\tau,i)|\det|^s\boxtimes\sigma^\iota$ (Theorem~\ref{main}). For irreducible $\sigma$, these coinvariants are exactly the $H$-module in the maximal $\sigma^\vee$-isotypic quotient of $J_s$. Under the hypotheses of Theorem~\ref{finite}, we also give the complete decomposition of the analogous module over a finite field. This module is multiplicity free. Its constituents define an explicit correspondence between irreducible representations of $G$ and of $H$. Both results rest on a computation of the contributions of all orbits, including those that vanish globally (Theorem~\ref{layers}).

\subsection{The main theorem}

Let $F$ be a non-archimedean local field of characteristic zero. Let $\mathcal O_F$ be its ring of integers. Fix a uniformizer $\varpi$ and put $q_F:=\#(\mathcal O_F/\varpi\mathcal O_F)$. We normalize the absolute value by $|\varpi|=q_F^{-1}$. Fix a nontrivial additive character $\psi$ of $F$. We keep the integers $n,m,i$ and the family $\calH_r$, and assume that $m$ is even in the symplectic case. We set $\calH_0(F):=\{1\}$ and $\GL_0(F):=\{1\}$, and put $N:=n(m+i)$, $\bfS:=\mathcal H_{2N}(F)$, $G:=\mathcal H_m(F)$ and $H:=\mathcal H_{2ni+m}(F)$. For a group $L=\calH_r(F)$ and $0\le k\le\lfloor r/2\rfloor$, let $Q_k^L$ be the standard parabolic subgroup with Levi subgroup $\GL_k\times\calH_{r-2k}$. Put $Q_N:=Q_N^{\bfS}$, the Siegel parabolic subgroup, and $Q_{ni}:=Q_{ni}^H$. We identify the Levi subgroup of $Q_{ni}$ with $\GL_{ni}(F)\times G$ by (\ref{Levi}). The conventions for representations, Jacquet functors, induction and the endpoint parabolic subgroups of even orthogonal groups are in Section~\ref{notation} at the end of this introduction.

Let $\tau$ be an irreducible unitary generic representation of $\GL_n(F)$. For $c\geq1$, let $\Delta(\tau,c)$ be the local Speh representation of Section~2.2 of \cite{CFGK23}. It is an $(n,c)$-representation by \cite[Theorem~4]{CFGK23}. We set $\Delta(\tau,0):=\one$. For $s\in\bbC$, define
\[
  \rho_s:=\Ind_{Q_N}^{\bfS}\bigl(\Delta(\tau,m+i)|\det|^s\bigr).
\]

Let $U$, $\psi_U$, and $t:G\times H\longrightarrow\bfS$ be the local unipotent subgroup, character, and embedding of \cite{GS21}. We recall their matrices in Section~\ref{matrices}. The image of $t$ normalizes $U$ and preserves $\psi_U$, so $J_s:=J_{U,\psi_U}(\rho_s)$ is a representation of $G\times H$.

For a smooth representation $\sigma$ of $G$, give $J_s\otimes\sigma$ the diagonal $G$-action. The action of $H$ descends to the coinvariants
\[
  \mathbf I_s(\tau,\sigma):=(J_s\otimes\sigma)_G.
\]
This is the largest quotient of $J_s\otimes\sigma$ on which $G$ acts trivially. If $\sigma$ is irreducible and admissible, the maximal $\sigma^\vee$-isotypic quotient of $J_s$ is $\sigma^\vee\boxtimes\mathbf I_s(\tau,\sigma)$; see Lemma~\ref{homlemma}. Thus, for irreducible $\sigma$, the irreducible representations $\pi$ of $H$ for which $\sigma^\vee\boxtimes\pi$ is a quotient of $J_s$ are exactly the irreducible quotients of $\mathbf I_s(\tau,\sigma)$. Let $\iota:=\iota_0$ be the conjugation automorphism of $G$ defined in (\ref{iota}), and put $\sigma^\iota(g):=\sigma(g^\iota)$.

\begin{maintheorem}\label{main}
Let $\sigma$ be an admissible representation of $G$ of finite length. There is a finite set $\mathcal B(\tau,\sigma)\subset\bbC^\times$ such that, for $q_F^{-s}\notin\mathcal B(\tau,\sigma)$, there is a natural $H$-equivariant isomorphism
\begin{equation}\label{induced}
  \mathbf I_s(\tau,\sigma)\simeq \Ind_{Q_{ni}}^H \bigl(\Delta(\tau,i)|\det|^s\boxtimes\sigma^\iota\bigr).
\end{equation}
The map from the right hand side to the left hand side is induced by inclusion of the open orbit term.
\end{maintheorem}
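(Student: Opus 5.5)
The plan is to follow the local analogue of the Ginzburg--Soudry unfolding, orbit by orbit. First I apply the geometric lemma of Bernstein--Zelevinsky to the restriction of $\rho_s$ to the subgroup $t(G\times H)\ltimes U$. Concretely, I decompose $Q_N\backslash\bfS$ into finitely many double cosets $Q_N\gamma\,(t(G\times H)U)$, or rather into the orbits relevant for the character $\psi_U$. This gives a finite filtration of $\rho_s$ by $(G\times H)\ltimes U$-submodules. Its subquotients are compactly induced from stabilizers, with inducing data built out of $\Delta(\tau,m+i)|\det|^s$. Since $J_{U,\psi_U}$ is exact, $J_s$ inherits a finite filtration. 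Since $(\cdot\otimes\sigma)_G$ is right exact, I also get a spectral-sequence-type control through the $G$-homology groups $H_j(G,\cdot\otimes\sigma)$.

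For each orbit I compute the layer of $J_s$ explicitly. This is the content announced as Theorem~\ref{layers}. For a non-open orbit, I expect the $U$-twisted coinvariants to kill everything unless the orbit is attached to a standard parabolic $Q_k^G$ with $k\geq1$. The vanishing should come from the $(n,c)$-property of $\Delta(\tau,m+i)$ (\cite[Theorem~4]{CFGK23}): any degenerate character of a unipotent subgroup of $\GL_N$ that is not dominated by $(n^{m+i})$ gives zero Jacquet module. In the surviving case $k\geq1$, the layer should have the form
\[
\ind\bigl(\text{(Jacquet module of }\Delta(\tau,\cdot)\text{ along }\GL_k)\boxtimes(\text{data on }\calH_{m-2k})\bigr),
\]
with $G$ acting through $Q_k^G$. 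Tensoring with $\sigma$ and taking $G$-coinvariants then turns this layer, by Frobenius reciprocity, into an expression involving the Jacquet module $r_{Q_k^G}(\sigma)$. The open orbit, which corresponds to $k=0$, gives, as in the doubling method of \cite{GK23}, the layer
\[
\Ind_{Q_{ni}}^H\bigl(\Delta(\tau,i)|\det|^s\otimes \mathcal C^\infty_c(G)\bigr),
\]
with $G\times G$ acting by left and right translation twisted by $\iota$. Its $\sigma$-coinvariants are exactly the right hand side of (\ref{induced}). This layer is the natural map in the statement.

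Next I show that, away from finitely many $q_F^{-s}$, the non-open layers contribute nothing to $H_0$ or $H_1$ after tensoring with $\sigma$. On the layer attached to $Q_k^G$, the center $\GL_1$ of $\GL_k\subset M_k^G$ acts through two sources. From the Speh side it acts by a character containing $|\cdot|^{\pm s}$ times the finitely many central exponents of a Jacquet module of $\Delta(\tau,\cdot)$. On the other side it acts through the finitely many exponents of $r_{Q_k^G}(\sigma)$, which is admissible of finite length. If these exponents do not match, all $\GL_1$-homology groups vanish, and hence so do the $G$-homology groups of that layer. Mismatch is a polynomial condition in $q_F^{-s}$ that fails only for finitely many values. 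Collecting these values gives $\mathcal B(\tau,\sigma)$. The long exact sequence in $G$-homology then shows two things: the open-orbit map is surjective, because $H_0$ of the other layers vanishes, and it is injective, because $H_1$ of the closed-side layers vanishes. For injectivity I must also check that the open layer term is not reduced by $\Ind$. Here I use exactness of $\Ind$ and $(\mathcal C_c^\infty(G)\otimes\sigma)_G\simeq\sigma^\iota$.

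The main obstacle is the explicit layer computation. It requires showing that every double coset outside the family indexed by $Q_k^G$ dies under $J_{U,\psi_U}$, using the $(n,c)$-vanishing together with a conjugation argument that moves $\psi_U$ to a character of a unipotent radical in $\GL_N$. It also requires identifying the exact central characters on the surviving layers. I would first treat $n=1$, where $\Delta$ is a character and the computation is the classical pullback, and then handle general $n$ by root-exchange lemmas applied to the Speh representation.
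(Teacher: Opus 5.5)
Your outline has the same architecture as the paper's proof: the orbit filtration of $J_s$, the open orbit producing the induced representation, and the vanishing of $H_0$ and $H_1$ of the other layers through a mismatch of characters of $Z(\GL_e)$, followed by the long exact sequence. The gap is that the two steps you treat as formal are where the real content lies. First, you write the layer for $P_e=Q_e^G$ as induced from data on the Levi, and then invoke Frobenius reciprocity to bring in $r_{U_e^G}(\sigma)$. But the stabilizer contains $U_e^G$, and it acts on the coefficient $J_{V_{ni+m,m^{n-1}},\psi_e}(\Delta(\tau,m+i))$ through $\ell_e$, that is, through the Siegel Levi elements $z(y)=D_3(w)D_2(Z_y)$ of (\ref{zy}). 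Unless these act trivially, the $U_e^G$-coinvariants of (coefficient)$\,\otimes\,\sigma$ are not (coefficient)$\,\otimes\,\sigma_{U_e^G}$, and no Jacquet module of $\sigma$ appears. This is exactly where the global argument uses cuspidality. The paper proves the triviality in Theorem~\ref{triviality}, via Lemma~\ref{D2D3}. The root exchanges there produce unipotent groups that are not unipotent radicals, so the generalized Whittaker vanishing criterion must be checked by hand, with explicit gradings and a computation of the radical of $\omega_{N_B}$. Your plan contains no such step, and the wavefront bound applied to characters of standard unipotent radicals does not supply it.

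Second, the coefficient is not an ordinary Jacquet module of a Speh representation along $\GL_k$. It is the degenerate twisted Jacquet module for $\psi_e$, which has rank $m-e$ on the first adjacent block. It is not a priori of finite length. Moreover, $zI_e$ acts on it through an element that is $\diag(zI_e,I_{m-2e})$ on each block $Q_j$, which is in general not central in the corresponding Levi factor. So ``finitely many central exponents'' does not follow from finite length of Jacquet modules, and without it you have no finite set $\mathcal B(\tau,\sigma)$. The paper gets the finite list $\eta_{e,r}$ in two steps. Further root exchanges reduce the coefficient to an ordinary Jacquet module along $(ni+e,n(m-e),(n-1)e)$ followed by rectangular twisted Jacquet modules, with Jacobian $|z|^{-emR_n}$ (Theorem~\ref{coarse}). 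Then the rectangular quotient of each constituent is shown to be one dimensional, by comparing a neutral and a degenerate model for $(n^{m-e})$ (Lemma~\ref{derivative}). The same one-dimensionality, together with the derivative formula for $\Delta(\tau,c)$, identifies the open-orbit coefficient with $\Delta(\tau,i)|\det|^{-m/2}\boxtimes\one_G$ (Proposition~\ref{openSpeh}). For $i\ge1$ this is not covered by appealing to the doubling method.
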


For every smooth representation $\sigma$ of $G$, inclusion of the open orbit term and the integral (\ref{int1}) define a natural $H$-equivariant map
\[
  \Ind_{Q_{ni}}^H \bigl(\Delta(\tau,i)|\det|^s\boxtimes\sigma^\iota\bigr) \longrightarrow \mathbf I_s(\tau,\sigma).
\]
For each fixed admissible representation $\sigma$ of finite length, Theorem~\ref{main} shows that this map is an isomorphism whenever $q_F^{-s}\notin\mathcal B(\tau,\sigma)$.

\paragraph{The case $i=0$.}
Here $H=G$, and $J_s$ is the module of the generalized doubling method studied in \cite{GK23}, with $k=n$ and $c=m$. More precisely, for even $m$ the matrices of Section~\ref{matrices} are those of \cite[Section~1.5]{GK23}, and $J_s$ is the module of \cite[(2.1)]{GK23} at the parameter $s+1/2$, with $\psi$ replaced by $\psi^{-1}$. For odd $m$, the two characters are conjugate by a similitude of ratio $-1$ of the form defining $\bfS$, which fixes $\rho_s$, followed by an element of the Levi subgroup of the parabolic subgroup normalizing $U$. The open orbit term of $J_s$ is the regular representation of $G\times G$ on $C_c^\infty(G)$, with the second factor acting through $\iota$. Outside $\mathcal B(\tau,\sigma)$, all other orbit terms have zero $G$-coinvariants after tensoring with $\sigma$. Hence Theorem~\ref{main} gives $\mathbf I_s(\tau,\sigma)\simeq\sigma^\iota$. For irreducible $\sigma$ and $\pi$, it follows that
\[
  \dim\Hom_{G\times H}(J_s,\sigma^\vee\boxtimes\pi)
  =\begin{cases}
    1,&\pi\simeq\sigma^\iota,\\
    0,&\pi\not\simeq\sigma^\iota.
  \end{cases}
\]
For irreducible representations, this is the multiplicity one theorem of Gourevitch and Kaplan \cite[Theorem~2.1 and Corollary~2.3]{GK23}. For $\sigma$ of finite length, they bound $\dim\Hom_{G\times H}(J_s,\sigma^\vee\boxtimes\pi)$ by $\dim\Hom_G(\sigma^\iota,\pi)$ outside a discrete set of $s$ that depends on $\sigma$ and $\pi$. For the groups considered here, Theorem~\ref{main} gives equality for every smooth $\pi$. It identifies $(J_s\otimes\sigma)_G$ outside an explicit finite set depending only on $\tau$ and $\sigma$.

\paragraph{The case $i\ge1$.}
Now $\mathbf I_s(\tau,\sigma)$ is infinite dimensional, and Theorem~\ref{main} determines it as an $H$-module. Let $\sigma$ be admissible of finite length and let $\Pi$ be any smooth $H$-module. If $q_F^{-s}\notin\mathcal B(\tau,\sigma)$, then
\begin{equation}\label{hom1}
  \begin{aligned}
    &\Hom_{G\times H}(J_s,\sigma^\vee\boxtimes\Pi)\\
    &\qquad\simeq\Hom_H\!\left( \Ind_{Q_{ni}}^H \bigl(\Delta(\tau,i)|\det|^s\boxtimes\sigma^\iota\bigr),\Pi\right),
  \end{aligned}
\end{equation}
naturally in $\Pi$, by Lemma~\ref{homlemma}. In particular, for irreducible admissible $\sigma$ and $\pi$, the representation $\sigma^\vee\boxtimes\pi$ is a quotient of $J_s$ if and only if $\pi$ is a quotient of the induced representation in (\ref{induced}).

Suppose that $G$ is symplectic and $i=1$, that $\tau$ and $\sigma$ are irreducible and tempered, and that $s>0$ is real and satisfies the parameter condition. The induced representation in (\ref{induced}) is then a standard module. It has a unique irreducible quotient by the Langlands classification \cite[Theorem~1.1]{BJ13}. By (\ref{hom1}), this Langlands quotient is the unique irreducible representation $\pi$ of $H$ with $\Hom_{G\times H}(J_s,\sigma^\vee\boxtimes\pi)\ne0$. The $\Hom$ space is one dimensional by \cite[Remark~4.5]{BJ13}. Distinct $\sigma$ give nonisomorphic quotients. Thus $J_s$ realizes an injective map that sends $\sigma$ to the Langlands quotient of the induced representation in (\ref{induced}).

The theorem also determines geometric wavefront sets. The wavefront set of a parabolically induced representation is obtained by inducing nilpotent orbits from the Levi subgroup; see \cite[(2.1)]{CK25}. For $i\ge1$, $\Delta(\tau,i)$ has the rectangular wavefront orbit $(n^i)$ by \cite[Theorem~4]{CFGK23}. Under the parameter condition, the geometric wavefront set of $\mathbf I_s(\tau,\sigma)$ is therefore determined by $(n^i)$ and the geometric wavefront set of $\sigma^\iota$. The resulting geometric wavefront set bounds those of all irreducible constituents of $\mathbf I_s(\tau,\sigma)$.

Theorem~\ref{main} identifies the local representation underlying the right hand side of (\ref{GS}). At a non-archimedean place $v$, this representation is the largest quotient of $J_s\otimes\sigma_v$ on which $G$ acts trivially, provided that $\sigma_v$ satisfies the hypotheses and parameter condition of the theorem.

The set $\mathcal B(\tau,\sigma)$ is given explicitly in (\ref{badset}). A parameter is exceptional only if a central character of a Jacquet module of $\sigma$ along a maximal parabolic subgroup of $G$ matches one of finitely many characters attached to $\Delta(\tau,m+i)$. Remark~\ref{exception} gives an exceptional parameter at which (\ref{induced}) fails. If $\sigma$ is irreducible and supercuspidal and $G\ne\SO_2$, the exceptional set is empty. In this case, (\ref{induced}) holds for every $s$ (Corollary~\ref{cusp}). If $\tau$ is supercuspidal and $n>\lfloor m/2\rfloor$, with $n\ge2$, then (\ref{induced}) holds for every $s$ and every smooth $\sigma$ (Corollary~\ref{taucusp}). Theorem~\ref{metmain} proves the identity for genuine representations of the symplectic double cover.

\paragraph{Comparison with the theta correspondence.}
Ginzburg and Soudry compare (\ref{GS}) to towers of theta lifts. The local construction has the same quotient structure: one starts with a representation of $G\times H$ and takes its maximal $\sigma^\vee$-isotypic quotient. In the theta correspondence, this representation is the Weil representation, and the resulting $H$-module is the big theta lift. Here the representation is $J_s$. Theorem~\ref{main} identifies the resulting module $\mathbf I_s(\tau,\sigma)$ with an explicit induced representation for all but finitely many values of $q_F^{-s}$.

\subsection{Correspondence over finite fields}

Let $k$ be a finite field of odd characteristic $p$. Keep the family $\mathcal H_r$, and assume $m\ge2$ and $n>\lfloor m/2\rfloor$. Put $G_k:=\mathcal H_m(k)$ and $H_{i,k}:=\mathcal H_{2ni+m}(k)$. Let $\tau_k$ be an irreducible cuspidal representation of $\GL_n(k)$. Let $\Delta(\tau_k,r)$ be the finite Speh representation defined in Section~\ref{Fq}, with $\Delta(\tau_k,0):=\one$. Fix a nontrivial additive character of $k$. The same matrices define $U$, $\psi_U$, and $t$ over $k$. Let $Q_N$ denote the Siegel parabolic subgroup over $k$. Induction in this subsection is ordinary induction of finite groups. Define
\[
J_i:=J_{U,\psi_U}\!\left(\Ind_{Q_N}^{\mathcal H_{2N}(k)}\Delta(\tau_k,m+i)\right).
\]
Let $Q_{ni}^{H_{i,k}}$ be the standard parabolic subgroup with Levi subgroup $\GL_{ni}(k)\times G_k$. The automorphism $\iota$ is defined by the same matrices over $k$. For a representation $\sigma$ of $G_k$, put
\[
I_i(\sigma):=\Ind_{Q_{ni}^{H_{i,k}}}^{H_{i,k}}\bigl(\Delta(\tau_k,i)\boxtimes\sigma^\iota\bigr).
\]
Let $\operatorname{Irr}(G_k)$ denote the set of isomorphism classes of irreducible complex representations of $G_k$.

\begin{maintheorem}\label{finite}
If $p>2n(m+i)$, then
\[
J_i\simeq\bigoplus_{\sigma\in\operatorname{Irr}(G_k)} \sigma^\vee\boxtimes I_i(\sigma).
\]
If $\tau_k\not\simeq\tau_k^\vee$, each $I_i(\sigma)$ is irreducible. If $\tau_k\simeq\tau_k^\vee$, each $I_i(\sigma)$ is a direct sum of exactly $i+1$ pairwise nonisomorphic irreducible representations. Distinct $\sigma$ give representations $I_i(\sigma)$ with no common irreducible constituent.
\end{maintheorem}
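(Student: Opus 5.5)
The plan is to run the finite-field analogue of the orbit-by-orbit analysis behind Theorem~\ref{main} (the layer computation of Theorem~\ref{layers}), and then use semisimplicity of representations of finite groups. By Mackey theory, the restriction of $\Ind_{Q_N}^{\mathcal H_{2N}(k)}\Delta(\tau_k,m+i)$ to $Q_N$-double cosets, followed by twisted Jacquet functors, yields a finite filtration of $J_i$ by $G_k\times H_{i,k}$-modules indexed by the double cosets $Q_N\backslash\mathcal H_{2N}(k)/(t(G_k\times H_{i,k})\ltimes U)$-orbits. Over a finite field every such filtration splits. Hence $J_i$ is the direct sum of the orbit contributions.

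The open orbit contributes, exactly as over $F$, a module of the form $\Ind_{Q_{ni}}^{H_{i,k}}$ applied to $\Delta(\tau_k,i)\boxtimes k[G_k]$, with $G_k\times G_k$ acting on $k[G_k]$ by left and $\iota$-twisted right translation. Peter--Weyl gives
\[
k[G_k]\simeq\bigoplus_{\sigma\in\operatorname{Irr}(G_k)}\sigma^\vee\boxtimes\sigma^\iota.
\]
This yields $\bigoplus_{\sigma}\sigma^\vee\boxtimes I_i(\sigma)$. It therefore remains to show that every non-open orbit contributes zero.

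The vanishing step is the main obstacle. Each non-open orbit contribution is a module induced from a Jacquet module of $\Delta(\tau_k,m+i)$ along a parabolic subgroup meeting $\GL_N$ in a Levi of type containing a block $\GL_j$ with $j$ determined by a proper parabolic $Q_j^{G_k}$, $1\le j\le\lfloor m/2\rfloor$. It is tensored with a twisted Jacquet (or constant-term) datum. Since $\tau_k$ is cuspidal, the finite Speh representation $\Delta(\tau_k,r)$ has cuspidal support $\tau_k^{\otimes r}$. Its Jacquet modules along $\GL_{a_1}\times\cdots$ vanish unless every $a_\ell$ is a multiple of $n$. The hypothesis $n>\lfloor m/2\rfloor\ge j$ then forces each such contribution to vanish, because the relevant block size $j$ (or $j$ plus a multiple of $n$ not divisible by $n$) is not a multiple of $n$. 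The hypothesis $p>2n(m+i)$ is used to guarantee that the finite-field Speh representations have the needed $(n,c)$ property: the unique-model and degenerate Whittaker vanishing statements are proved via exponential/Kawanaka-type arguments that need $p$ large. The same hypothesis ensures that the orbit analysis of twisted Jacquet modules matches the $p$-adic one. I would check these vanishing claims orbit by orbit, following the list from Theorem~\ref{layers}.

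For the irreducibility statements, the plan is to compute the endomorphism algebra of $I_i(\sigma)$ by Mackey/Harish-Chandra theory. Write $\sigma$ via its cuspidal support. Since $n>\lfloor m/2\rfloor$, $\tau_k$ does not occur in the cuspidal support of $\sigma$. The relevant Weyl group therefore acts only on the $i$ copies of $\tau_k$ appearing in $\Delta(\tau_k,i)$.

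If $\tau_k\not\simeq\tau_k^\vee$, no Weyl element can invert a $\tau_k$-block. The intertwining algebra is then that of $\Delta(\tau_k,i)$ inside $\GL_{ni}$, which is one-dimensional, so $I_i(\sigma)$ is irreducible. If $\tau_k\simeq\tau_k^\vee$, the relative Hecke algebra for the $\tau_k$-part is of type $B_i$ with unequal parameters. The Speh representation corresponds to the sign-type character of the $S_i$ subalgebra. The induction to $B_i$ of that character decomposes as a multiplicity-free sum of $i+1$ irreducibles, indexed by bipartitions $((1^a),(1^{i-a}))$. This gives the $i+1$ pairwise nonisomorphic constituents.

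Finally, distinct $\sigma$ have disjoint constituents. This holds because the Harish-Chandra series, together with the Hecke-module parameter restricted to the $G_k$-part, recovers $\sigma^\iota$ from any constituent. Since $\iota$ is an automorphism, this recovers $\sigma$.
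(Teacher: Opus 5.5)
Your overall architecture is the paper's. The Mackey decomposition splits over $k$. The open orbit gives $\Ind$ of $\Delta(\tau_k,i)$ against the regular representation of $G_k$; your Peter--Weyl step, which should use $\bbC[G_k]$ rather than $k[G_k]$, is an equivalent repackaging of the paper's computation of $(J_i\otimes\sigma)_{G_k}$ followed by semisimplicity. The constituent count is the paper's Harish-Chandra argument with $\mathfrak S_i\subset B_i$; your Hecke-algebra phrasing is equivalent to the comparison theorem by Tits deformation.

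The genuine gap is the vanishing of the non-open orbits, and your sketch of it does not work as stated. The coefficient of the orbit $e>0$ is the twisted Jacquet module $J_{V_{ni+m,m^{n-1}},\psi_e}(\Delta(\tau_k,m+i))$. It is not an ordinary Jacquet module along a parabolic with a $\GL_e$-block. Cuspidal support also cannot be applied to it directly. Indeed, $\psi_e$ is nontrivial on every adjacent block $x_j$ (on $x_1$ because $a_e\ne0$, as $e<m$), hence on every standard parabolic radical contained in $V_{ni+m,m^{n-1}}$. Moreover, the visible block sizes $ni+m$ and $m$ may be divisible by $n$.

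The missing idea is an extra unipotent invariance, which comes from the wavefront bound and not from cuspidality. The character $\psi_e$ is trivial on the whole row of $L_0=W_0\oplus P_1$, which has dimension $ni+e$. This row of $V_{ni+m,m^{n-1}}$ lacks exactly the $(L_0,Q_1)$-block $D_0$, so you must show that $D_0$ acts trivially on the coefficient. Each nontrivial character of $D_0$, combined with $\psi_e$, is represented by a nilpotent $N_C$ with $N_C^n\ne0$, and Lemma~\ref{compare} kills the corresponding Jacquet module. That lemma rests on the bound $\lambda\le(n^c)$ for the finite Speh representation and on truncated $\exp$/$\log$ for non-standard unipotent groups. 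This is where $p>2n(m+i)$ is actually used.

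Only after adjoining $D_0$ does the coefficient factor through the ordinary Jacquet module along $(ni+e,N-ni-e)$. That module vanishes by (\ref{HC}), because $n\nmid ni+e$ for $1\le e\le d<n$. The paper proves the stronger reduction (\ref{finiteJe}), using root exchanges and the triviality of $D_2$ and $R$, but $D_0$ suffices for the vanishing.

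The same criterion is needed at two other places in your outline. The orbit list you import from Theorem~\ref{layers} presupposes that the double cosets with $r>0$ have been eliminated (Lemma~\ref{intersection}, Proposition~\ref{rzero}). On the open orbit, the missing $(W_0,M_1)$-block must act trivially before (\ref{rect}) and (\ref{HC}) give $\Delta(\tau_k,i)\boxtimes\one$. ``Exactly as over $F$'' does not work there, since the $p$-adic proof uses Bernstein--Zelevinsky derivatives.

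Finally, in the orthogonal case, $W_{H_i}(L,\eta)\simeq B_i\times W_0$ requires the reversal of a new hyperbolic $\GL_n$-block, which has determinant $(-1)^n$, to lie in the special orthogonal group. This uses the fact that a selfdual cuspidal $\tau_k$ with $n>1$ forces $n$ to be even.
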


Thus $J_i$ is multiplicity free as a representation of $G_k\times H_{i,k}$. Its constituents define a correspondence between $\operatorname{Irr}(G_k)$ and a subset of $\operatorname{Irr}(H_{i,k})$. A representation $\sigma$ corresponds to one irreducible representation of $H_{i,k}$ if $\tau_k$ is not selfdual, and to exactly $i+1$ if it is selfdual. Each of these irreducible representations determines $\sigma$ through an ordinary Jacquet module. Let $U_Q$ be the unipotent radical of $Q_{ni}^{H_{i,k}}$. If $\pi$ is an irreducible constituent of $I_i(\sigma)$, then
\[
  \Hom_{\GL_{ni}(k)} \bigl(\Delta(\tau_k,i),J_{U_Q,1}(\pi)\bigr) \simeq\sigma^\iota
\]
as $G_k$-modules. By Frobenius reciprocity, the multiplicity of $(\sigma')^\iota$ on the left equals the multiplicity of $\pi$ in $I_i(\sigma')$. Theorem~\ref{finite} gives multiplicity one for $\sigma'\simeq\sigma$ and zero otherwise.

As in Corollary~\ref{taucusp}, the condition $n>\lfloor m/2\rfloor$ makes every orbit other than the open one contribute zero. This vanishing holds before taking coinvariants with a representation of $G_k$. The decomposition of $J_i$ is proved in Theorem~\ref{finitecase}. The assertions about $I_i(\sigma)$ are proved in Proposition~\ref{constituents}, which requires only odd characteristic.

\subsection{The orbit contributions}

The proof of Theorem~\ref{main} starts from the double coset decomposition used by Ginzburg and Soudry. Global cuspidality eliminates all terms other than the open orbit. Locally, the other terms need not vanish. We compute them and prove that their $G$-coinvariants after tensoring with $\sigma$ vanish outside $\mathcal B(\tau,\sigma)$. The geometric lemma of Bernstein and Zelevinsky \cite{BZ77}, applied with the double coset representatives of \cite{GS21}, gives a filtration of $J_s$ by $G\times H$-modules. The matrix calculations of \cite{GS21}, together with the wavefront bound on $\Delta(\tau,m+i)$ from \cite{CFGK23}, eliminate most double cosets. The remaining orbits are indexed by an integer $e$ with $0\le e\le d:=\lfloor m/2\rfloor$, with two signed terms at $e=d$ when $G=\SO_{2d}$. For $0\leq e\leq d$, we write $P_e:=Q_e^G$, $R_e:=Q_{ni+e}^H$ and $G_e:=\mathcal H_{m-2e}(F)$. Let $U_e^G$ and $U_{R_e}$ be the unipotent radicals of $P_e$ and $R_e$, respectively. When $G=\SO_{2d}$, the signs of $Q_d^{G,\varepsilon}$ and $Q_{ni+d}^{H,\varepsilon}$ are matched as in Proposition~\ref{evenSO}. Except when $G=\SO_{2d}$ and $e=d$, the stabilizer of the orbit indexed by $e$ is
\begin{equation}\label{Se}
S_e:=(U_e^G\times U_{R_e})\rtimes (\GL_e\times\GL_{ni+e}\times G_e).
\end{equation}
The two projections of $G_e$ lie in the classical factors of the Levi subgroups of $P_e$ and $R_e$; see Proposition~\ref{stab}. The stabilizer acts, through a projection $\ell_e$ to the Siegel Levi subgroup, on the twisted Jacquet module $J_{V_{ni+m,m^{n-1}},\psi_e}(\Delta(\tau,m+i))$ defined in (\ref{Ce}). Put
\begin{equation}\label{ab}
  \alpha_e:=\frac{ni-m(n-1)+e}{2},\qquad \beta_e:=\frac{e-m(n-1)}{2}.
\end{equation}
Let $\calE_{e,s}$ denote this twisted Jacquet module with the following action of $\GL_e\times\GL_{ni+e}\times G_e$:
\begin{equation}\label{Eaction}
  \begin{aligned}
    \calE_{e,s}(a,A,b) &:={}|\det a|^{s+\alpha_e}|\det A|^{s+\beta_e}\\
    &\qquad\cdot \bigl[J_{V_{ni+m,m^{n-1}},\psi_e}(\Delta(\tau,m+i))\bigr] \bigl(\ell_e(a,A,b)\bigr).
  \end{aligned}
\end{equation}

\begin{maintheorem}\label{layers}
Let $W_{e,s}$ be the representation of $S_e$ defined in (\ref{rhoe}). The $G\times H$-module $J_s$ has a finite filtration whose first term corresponds to the open orbit. Its successive quotients are
\[
J_{e,s}\simeq\ind_{S_e}^{G\times H}W_{e,s},
\]
with $e=0,\ldots,d$ when $G\ne\SO_{2d}$; when $G=\SO_{2d}$, two signed indices replace $e=d$. For $e>0$, the unipotent radical $U_e^G$ acts trivially on $W_{e,s}$. For every smooth $G$-module $\sigma$ and every $s$, there are $H$-equivariant isomorphisms
\begin{align}
  (J_{0,s}\otimes\sigma)_G &\simeq \Ind_{Q_{ni}}^H \bigl(\Delta(\tau,i)|\det|^s\boxtimes\sigma^\iota\bigr), \notag\\
  (J_{e,s}\otimes\sigma)_G &\simeq \Ind_{R_e}^H \bigl(\calE_{e,s}\otimes r_{U_e^G}(\sigma)\bigr)_{\GL_e}, \qquad e>0. \label{bterm}
\end{align}
If $G=\SO_{2d}$, the two endpoint stabilizers are $S_{d,\varepsilon}:=Q_d^{G,\varepsilon}\times Q_{ni+d}^{H,\varepsilon}$, for $\varepsilon\in\{+,-\}$, and (\ref{bterm}) holds for each signed term.
\end{maintheorem}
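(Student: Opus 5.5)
The plan has three stages: build the filtration of $J_s$, compute the stabilizers and the modules $W_{e,s}$, and then compute the coinvariants in the two displayed formulas.

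\textbf{The filtration.} Restrict $\rho_s$ to the parabolic subgroup $P_U\supset U\rtimes t(G\times H)$ normalizing $U$. By Bernstein--Zelevinsky \cite{BZ77}, $\rho_s$ then has a finite filtration indexed by the double cosets $Q_N\backslash\bfS/P_U$, ordered so that open cosets come first. The functor $J_{U,\psi_U}$ is exact, so it carries this to a filtration of $J_s$.

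Next, refine each layer by the $G\times H$-orbits on the finitely many $U$-orbits involved. For the representatives $\gamma$ of \cite{GS21}, the layer is compactly induced from the stabilizer of $\gamma$. The twisted Jacquet module of the restricted section space is computed by the matrix calculations of \cite{GS21}. For most $\gamma$, either $\psi_U$ is nontrivial on $U\cap\gamma^{-1}N_{Q_N}\gamma$, or the calculation produces a Fourier coefficient of $\Delta(\tau,m+i)$ attached to an orbit not dominated by $((m+i)^n)$. The first case kills the layer. In the second case the layer vanishes by the wavefront bound of \cite[Theorem~4]{CFGK23}, using that $\Delta(\tau,m+i)$ is an $(n,m+i)$-representation. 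The survivors are indexed by $e$, with two signed terms at $e=d$ for $\SO_{2d}$.

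\textbf{Stabilizers and the modules $W_{e,s}$.} For each surviving $e$, compute the stabilizer $S_e$ as in Proposition~\ref{stab} and Proposition~\ref{evenSO}. Define $W_{e,s}$ as the restricted twisted Jacquet module. This is $\calE_{e,s}$, extended to $S_e$ with $U_e^G$ and $U_{R_e}$ acting trivially. The modulus characters $|\det a|^{\alpha_e}$ and $|\det A|^{\beta_e}$ come from the Jacobians of $\gamma$ and from the normalization of induction.

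Triviality of $U_e^G$ needs a check. Its image under $\ell_e$ lies in $U\cap\gamma^{-1}N_{Q_N}\gamma$, where the character is trivial on this orbit, or else $U_e^G$ maps into the Siegel unipotent radical, which acts trivially on $\rho_s$.

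\textbf{The coinvariants.} For $e>0$, the plan is a chain of identifications:
\[
(\ind_{S_e}^{G\times H}W_{e,s}\otimes\sigma)_G\simeq\ind_{R_e}^{H}\bigl((W_{e,s}\otimes\sigma)_{P_e\text{-twisted}}\bigr)\simeq\Ind_{R_e}^H\bigl(\calE_{e,s}\otimes r_{U_e^G}(\sigma)\bigr)_{\GL_e}.
\]
\begin{itemize}
\item Use $\ind_{S_e}^{G\times H}=\ind_{P_e\times R_e}^{G\times H}\ind_{S_e}^{P_e\times R_e}$, and note that $S_e\subset P_e\times R_e$.
\item Apply the Frobenius-type identity $(\ind_{P}^{G}W\otimes\sigma)_G\simeq(W\otimes\sigma|_{P}\otimes\delta)_P$, which holds because $G/P_e$ is compact.
\item Take $U_e^G$-coinvariants first. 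Since $U_e^G$ acts trivially on $W_{e,s}$, this replaces $\sigma$ by $r_{U_e^G}(\sigma)$, with the normalization absorbed into $\alpha_e$.
\item The factor $G_e$ of the stabilizer is embedded diagonally, so taking its coinvariants against $r_{U_e^G}(\sigma)$ leaves an $H$-module induced from $R_e$.
\item The $\GL_e$ factor survives only as the coinvariants shown in (\ref{bterm}).
\end{itemize}
For $e=0$, the stabilizer is $G\times Q_{ni}$ with $G$ embedded diagonally through $\iota$, so $J_{0,s}=\ind_{Q_{ni}}^H(\Delta(\tau,i)|\det|^s\otimes C_c^\infty(G))$. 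Tensoring with $\sigma$ and taking $G$-coinvariants gives $\sigma^\iota$ in place of $C_c^\infty(G)$. Here one uses the identification $J_{V,\psi_0}(\Delta(\tau,m+i))\simeq\Delta(\tau,i)$ twisted by a character, which follows from the $(n,c)$-property of \cite{CFGK23}.

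\textbf{Main obstacle.} The hardest step is the orbit elimination together with the exact identification of the surviving modules and exponents. This requires controlling the $\GL$ parts of $\gamma^{-1}U\gamma\cap Q_N$ and matching the characters $\psi_e$ precisely. For this I would rely on the calculations of \cite{GS21} and track the modulus characters carefully.
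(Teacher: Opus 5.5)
\textbf{Gap: the trivial action of $U_e^G$.} You treat the triviality of $U_e^G$ on $W_{e,s}$ as a routine check, but it is the real content of the theorem, and your argument for it is wrong. Neither of your two alternatives occurs for $n\ge2$.

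First, $t(G\times H)$ lies in the Levi factor $\GL_m^{n-1}\times\calH_{2(ni+m)}$ of the parabolic whose unipotent radical is $U$. Hence $t(U_e^G)\cap U=\{1\}$.

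Second, for $y\in U_e^G$, conjugation by $w_e$ does put $t(y,1)$ in $Q_N$, but not in its unipotent radical. Its Siegel Levi component is
\[
z(y)=\diag(I_{ni},g(w),y^{\Delta_{n-1}})=D_3(w)D_2(Z_y).
\]
This is a nontrivial unipotent element of $\GL_N$. It normalizes $V_{ni+m,m^{n-1}}$, fixes $\psi_e$, and acts on $J_{V_{ni+m,m^{n-1}},\psi_e}(\Delta(\tau,m+i))$ through $\Delta(\tau,m+i)(z(y))$. Nothing formal makes this action trivial. It is a property of the Speh representation, and it is exactly where the global unfolding of Ginzburg and Soudry uses cuspidality of $\varphi_\sigma$. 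Without it, taking $U_e^G$-coinvariants in your chain does not replace $\sigma$ by $r_{U_e^G}(\sigma)$, so (\ref{bterm}) does not follow. For $n=1$ the conclusion does hold, but the reason is that $\Delta(\tau,m+i)$ is a determinant character and $\det z(y)=1$.

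\textbf{What is missing.} You need a Fourier expansion along the abelian groups $D_2$ and $D_3$. For each nontrivial character, perform root exchanges until the twisted Jacquet module is attached to a nilpotent $N_B$ with $N_B^n\ne0$, that is, with a Jordan block of size greater than $n$. The vanishing criterion of \cite{GGS17}, together with the bound $\lambda_1\le n$ of \cite{CFGK23}, then kills it. Note that this bound means domination by $(n^{m+i})$, not by $((m+i)^n)$ as you wrote.

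For $D_3$, the group reached after the exchanges is not a unipotent radical. You must construct a grading and show that the integration group contains the radical of $\omega_{N_B}$ and a maximal isotropic subspace. This in turn requires first adjoining further subgroups shown to act trivially. This is the content of Lemma~\ref{D2D3} and Theorem~\ref{triviality}.

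\textbf{The same problem elsewhere.} You build the trivial action of $U_{R_e}$ into your definition of $W_{e,s}$. In fact its image is the Levi subgroup $D_0$ (the $(L_0,Q_1)$ block), and its triviality is again a wavefront vanishing (Lemmas~\ref{endpoint} and~\ref{radical}). It is needed to write the answer as $\Ind_{R_e}^H$ of an inflated Levi module.

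On the open orbit, identifying the coefficient with $\Delta(\tau,i)$ up to a character, with trivial $G$-action, takes more than the $(n,c)$-property. You must:
\begin{itemize}
\item adjoin the $(W_0,M_1)$ block using the same criterion;
\item identify the quotient with an $m$-fold derivative by comparing the neutral and degenerate models for $(n^m)$ (Lemma~\ref{derivative});
\item show separately that the diagonal $\SL_m(F)\supset G$ acts trivially.
\end{itemize}

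Once these trivialities are established, your coinvariant chain through $P_e\times R_e$ is a workable variant of the paper's route through $K=\GL_e\ltimes U_e^G$ and $G\times R_e$.
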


Theorem~\ref{layers} follows from Theorem~\ref{geom}, Proposition~\ref{integral} and Theorem~\ref{boundary}. Theorem~\ref{boundary} also specifies the action of $G_e$ on the coefficient in (\ref{bterm}). For $e>0$, formula (\ref{bterm}) expresses the orbit contribution in terms of the ordinary Jacquet module of $\sigma$ along $P_e$ and a twisted Jacquet module of the Speh representation. If $P_e$ is proper and $\sigma$ is supercuspidal, the ordinary Jacquet module vanishes. This gives Corollary~\ref{cusp}.

Theorem~\ref{triviality} proves that $U_e^G$ acts trivially on $W_{e,s}$. This makes the $G$-coinvariants of the term indexed by $e$ factor through $r_{U_e^G}(\sigma)$. The corresponding global calculation is where cuspidality of $\varphi_\sigma$ is used. The proof of Theorem~\ref{triviality} combines root exchange \cite{GRS11,GS22} with the vanishing criterion for degenerate Whittaker models of \cite{GGS17} and the wavefront bound on $\Delta(\tau,m+i)$. The unipotent groups that occur are not unipotent radicals of parabolic subgroups. We therefore check the hypotheses of the criterion by constructing explicit gradings and computing the radicals of the associated symplectic forms (Lemma~\ref{D2D3}).

To deduce Theorem~\ref{main}, we determine how the center of $\GL_e$ acts on $\calE_{e,s}$. Further root exchanges reduce $\calE_{e,s}$ to an ordinary Jacquet module of $\Delta(\tau,m+i)$ followed by rectangular twisted Jacquet modules (Theorem~\ref{coarse}). The rectangular quotient of each constituent is one dimensional. We prove this by comparing a neutral and a degenerate model for the same rectangular orbit (Lemma~\ref{derivative}). On the open orbit, the same calculation produces $\Delta(\tau,i)$ (Proposition~\ref{openSpeh}). Hence the center of $\GL_e$ acts on $\calE_{e,s}$ through finitely many explicit characters (Proposition~\ref{spectrum}). Comparing them with the central characters of the Jacquet modules of $\sigma$, as in \cite[Lemmas~2.15--2.16]{GK23}, shows that the boundary terms have no $G$-coinvariants outside $\mathcal B(\tau,\sigma)$. This only shows that the open orbit term maps onto $\mathbf I_s(\tau,\sigma)$. We obtain injectivity by applying the same argument to the first smooth group homology of the boundary terms (Lemma~\ref{exactness}).

\subsection{Further directions}

Filtrations of the same kind as Theorem~\ref{layers} occur in the doubling method \cite{PSR87,HKS96,C21}. For unramified data, Ginzburg and Soudry compute a full local twisted Jacquet module in the setting of double descent \cite[Theorem~12.2]{GS22}. Questions about the irreducible quotients of twisted Jacquet modules also arise in local descent; for instance, Jiang and Zhang \cite{JZ18} determine the spectrum of local descents at the first occurrence for irreducible representations of $p$-adic special orthogonal groups with generic Langlands parameters.

For depth zero inducing data, one may ask how Theorems~\ref{main} and~\ref{finite} are related. Liu, Ma, and Shi study local descent for unitary groups using finite Fourier--Jacobi models and depth zero types \cite{LMS25}. For the modules considered here, the question is whether parahoric restriction relates the two theorems.

In the symplectic case with $\tau$ on $\GL_2$, another question concerns the level with no shorter Speh factor. At this level, one may replace the ambient induced representation by a residual quotient. Related residual Fourier coefficients appear in the Siegel--Weil type identities proposed in \cite[Section~6.2]{GS24}. This question requires understanding how Theorem~\ref{main} interacts with the intertwining operators that define the residual quotients.

\paragraph{Organization.}
Section~\ref{orbits} constructs the orbit filtration, determines the stabilizers, and proves Theorem~\ref{triviality}. Section~\ref{terms} computes the contribution of each orbit and proves Theorem~\ref{layers}. Section~\ref{generic} proves Theorem~\ref{main} and the identity for the symplectic double cover. Section~\ref{Fq} proves Theorem~\ref{finite} and describes the constituents. The finite field vanishing criterion is proved in Appendix~\ref{comparison}.

\paragraph{Acknowledgments.}
The author thanks his advisors, Professor Solomon Friedberg and Professor Omer Offen, for suggesting this problem and for their help and guidance. This work is part of the author's Ph.D. thesis.

\subsection{Notation and conventions}\label{notation}

In even orthogonal type, we fix one of the two special orthogonal orbits of maximal isotropic subspaces. We distinguish the two endpoint parabolics below. For $L=\SO_{2a}(F)$, let $Q_a^{L,+}$ and $Q_a^{L,-}$ be the stabilizers of the two standard maximal isotropic subspaces stabilized by the fixed Borel subgroup. These parabolics are distinct for $a\geq2$. For $a=1$, both are $L$. At a signed endpoint, we identify the general linear Levi factor by its action on the corresponding maximal isotropic subspace. The group $Q_N$ is the Siegel parabolic stabilizing the chosen standard maximal isotropic subspace. Its Levi factor is $\GL_N(F)$. For $i=0$, we have $Q_{ni}=H$.

All representations are complex and smooth. We use the conventions for smooth representations in Section~1 of \cite{BZ77}. We omit $(F)$ from general linear groups when the field is clear. We denote by $\Mat_{a\times b}(F)$ the space of $a\times b$ matrices over $F$. We write $\Mat_a(F):=\Mat_{a\times a}(F)$. The symbols $I_a$ and ${}^tg$ denote the identity matrix of size $a$ and the transpose of $g$, respectively. We write $0_{a\times b}$ for the $a\times b$ zero matrix. Put $0_a:=0_{a\times a}$. A plain $0$ in a block matrix has the size determined by its row and column blocks.

We denote by $\pi^\vee$ the smooth contragredient of a representation $\pi$. The symbol $\one_L$ denotes the trivial representation of a group $L$; we omit the subscript when the group is clear. We use $\boxtimes$ for the external tensor product of representations of different group factors, and $\otimes$ with the diagonal action when both factors are representations of the same group. For a character $\chi$, the twist $\pi\chi$ has action $(\pi\chi)(g)=\chi(g)\pi(g)$.

For a parabolic subgroup $P$, we denote its unipotent radical by $U_P$ and its modulus character by $\delta_P$, with convention $\delta_P(m)=|\det(\operatorname{Ad}(m)|_{\Lie U_P})|$ on its Levi subgroup.

For an ordered block decomposition with dimensions $a_1,\ldots,a_t$, let $V_{a_1,\ldots,a_t}$ be the group of upper block unipotent matrices. We omit blocks of dimension zero. In a subscript, $a^t$ means $t$ consecutive blocks of size $a$. An empty sum is zero, an empty product is one, and a matrix of size zero has determinant one and trace zero.

For $P=MU_P\subset L$ and a representation $\pi$ of $M$, we inflate $\pi$ to $P$ by letting $U_P$ act trivially. Our normalized induction $\Ind_P^L\pi$ has right translation action and consists of smooth functions satisfying $f(pg)=\delta_P(p)^{1/2}\pi(p)f(g)$ for $p\in P$ and $g\in L$. The symbol $\ind_K^L\rho$ denotes unnormalized compact induction: its functions satisfy $f(kg)=\rho(k)f(g)$ and have compact support modulo the closed subgroup $K$, again with right translation action.

As in Section~1.2 of \cite{GK23}, $J_{V,\chi}$ denotes the unnormalized twisted Jacquet functor. For a unipotent subgroup $V\subset L$, a character $\chi$ of $V$, and a smooth representation $\pi$ of $L$ on $E$, our convention is $J_{V,\chi}(\pi):=E/\langle\pi(v)\xi-\chi(v)\xi:v\in V,\ \xi\in E\rangle$. The stabilizer of $\chi$ in the normalizer of $V$ acts on this quotient. We abbreviate $J_{V,1}$ to $J_V$. The normalized ordinary Jacquet module for $P=MU_P$ is $r_{U_P}(\pi):=\delta_P^{-1/2}\otimes J_{U_P,1}(\pi)$ as an $M$-module. For a smooth representation $V$ of a locally profinite group $L$, define its space of coinvariants by $V_L:=V/\langle\ell v-v:\ell\in L,\ v\in V\rangle$.

We use the self-dual measure for $\psi$ on each root coordinate. We choose compatible quotient measures.

\section{The orbit filtration}\label{orbits}

\subsection{The unipotent subgroup and its character}\label{matrices}

We recall the matrices in (1.20)--(1.27) of \cite{GS21} over $F$. Let $w_r$ be the antidiagonal permutation matrix of size $r$. For a nonsingular symmetric matrix $A$, we write $\SO(A):=\{g\in\SL_r(F):{}^tgAg=A\}$, where $r$ is the size of $A$. The split source orthogonal groups and all even orthogonal groups are realized by the forms $w_r$. The symplectic group $\Sp_{2a}(F)$ preserves the alternating form $\bigl(\begin{smallmatrix}0&w_a\\-w_a&0\end{smallmatrix}\bigr)$. For the odd-dimensional target orthogonal group, we use the form $w'_{2k+1}$ specified below. For $g\in\GL_r(F)$, we write $g^*:=w_r\,{}^tg^{-1}w_r$.

For $n\ge2$, let $U$ be the unipotent radical of the standard parabolic of $\bfS$ with Levi subgroup $\GL_m^{\,n-1}\times\mathcal H_{2(ni+m)}$. Let $d:=\lfloor m/2\rfloor$ and $b:=2(ni+\lceil m/2\rceil)$.

With block sizes $m,\ldots,m,d,b,d,m,\ldots,m$, an element of $U$ has the form
\[
  \setlength{\arraycolsep}{4pt}
u=\begin{pmatrix}
I_m&x_1&\star&\star&\star&\star&\star&\star&\star\\
    &\ddots&x_{n-2}&\star&\star&\star&\star&\star&\star\\
    &&I_m&y_1&y_2&y_3&\star&\star&\star\\
    &&&I_d&0&0&y'_3&\star&\star\\
    &&&&I_b&0&y'_2&\star&\star\\
    &&&&&I_d&y'_1&\star&\star\\
    &&&&&&I_m&x'_{n-2}&\star\\
    &&&&&&&\ddots&x'_1\\
    &&&&&&&&I_m
  \end{pmatrix}\in\bfS.
\]
There are $n-1$ diagonal blocks of size $m$ on each side of the central block. Blank entries are zero. The primed entries are determined by preservation of the defining form. The symbols $\star$ denote the remaining entries allowed by that condition. For $n=2$, there is one block of size $m$ on each side. No $x_j$ occurs in this case. The blocks $y_1,y_3$ belong to $\Mat_{m\times d}(F)$. The block $y_2$ belongs to $\Mat_{m\times b}(F)$.

We define $A_{\bfS}\in\Mat_{2(ni+m)\times m}(F)$ as follows. For $m=2d$, we put
\[
A_{\bfS}:=\begin{pmatrix}
I_d&0\\
    0_{(2ni+m)\times d}&0_{(2ni+m)\times d}\\
    0&I_d
  \end{pmatrix}.
\]
For $m=2d+1$, let $k:=ni+d$. With row sizes $d,k,1,1,k,d$ and column sizes $d,1,d$, we put
\[
A_{\bfS}:=\begin{pmatrix}
I_d&0&0\\
    0&0&0\\
    0&1&0\\
    0&\tfrac12&0\\
    0&0&0\\
    0&0&I_d
  \end{pmatrix}.
\]
The character of $U$ is defined by
\begin{equation}
  \psi_U(u):=\psi\left(\sum_{j=1}^{n-2}\tr x_j+ \tr\bigl((y_1,y_2,y_3)A_{\bfS}\bigr)\right). \label{psiU}
\end{equation}
For $n=1$, $U:=\{1\}$ and $\psi_U:=1$.

For a square matrix $g$, we write $g^{\Delta_r}:=\diag(g,\ldots,g)$ with $r$ copies. For $g\in G$ and $h\in H$, the matrix $j(g,h)$ is defined as follows. For $m=2d$, we write $g=\bigl(\begin{smallmatrix}a&b_1\\c&d_1\end{smallmatrix}\bigr)$ in $d\times d$ blocks.  In block sizes $d,2ni+m,d$,
\begin{equation}
j(g,h):=\begin{pmatrix}
a&0&b_1\\
    0&h&0\\
c&0&d_1
  \end{pmatrix}.
  \label{jm}
\end{equation}
For $m=2d+1$ the groups are orthogonal.  As in (1.26) of \cite{GS21}, we realize the split group $H$ using the form $w'_{2k+1}:=\bigl(\begin{smallmatrix}0&0&w_k\\0&-1&0\\w_k&0&0\end{smallmatrix}\bigr)$, where $k=ni+d$. We write
\[
g=\begin{pmatrix}a_1&b_1&c_1\\a_2&b_2&c_2\\a_3&b_3&c_3\end{pmatrix},
  \qquad
h=\begin{pmatrix}A_1&B_1&C_1\\A_2&B_2&C_2\\A_3&B_3&C_3\end{pmatrix},
\]
in block sizes $d,1,d$ and $k,1,k$, respectively. With block sizes $d,k,1,1,k,d$, we put
\[
  \setlength{\arraycolsep}{4pt}
  \renewcommand{\arraystretch}{1.2}
j(g,h):=\begin{pmatrix}
a_1&0&\tfrac12b_1&b_1&0&c_1\\
    0&A_1&\tfrac12B_1&-B_1&C_1&0\\
a_2&A_2&\tfrac12(b_2+B_2)&b_2-B_2&C_2&c_2\\
    \tfrac12a_2&-\tfrac12A_2&\tfrac14(b_2-B_2)&\tfrac12(b_2+B_2)&-\tfrac12C_2&\tfrac12c_2\\
    0&A_3&\tfrac12B_3&-B_3&C_3&0\\
a_3&0&\tfrac12b_3&b_3&0&c_3
  \end{pmatrix}.
\]
For every odd $c=2a+1$, let $w'_c:=\bigl(\begin{smallmatrix}0&0&w_a\\0&-1&0\\w_a&0&0\end{smallmatrix}\bigr)$. Put $K_c:=\diag(I_a,1,-I_a)$. The identity ${}^tK_cw'_cK_c=-w_c$ identifies $\SO(w_c)$ with $\SO(w'_c)$ by $b\mapsto K_cbK_c^{-1}$. For symplectic groups and even orthogonal groups, we take $K_c:=I_c$. These conventions identify every residual classical factor in a Levi subgroup of $H$ with the corresponding group in the source family.

Let $P\subset G$ and $R\subset H$ have residual dimension $c$. For $b\in\mathcal H_c(F)$ and general linear elements $a,A$ of the corresponding sizes, we use
\begin{equation}
m_P(a,b):=\diag(a,b,a^*),\qquad m_R(A,b):=\diag(A,K_cbK_c^{-1},A^*). \label{Levi}
\end{equation}
For odd orthogonal groups, the two embeddings use the forms $w_c$ and $w'_c$, respectively.

The embedding $t:G\times H\longrightarrow\bfS$ is
\begin{equation}
t(g,h):=\diag\bigl(g^{\Delta_{n-1}},j(g,h), (g^*)^{\Delta_{n-1}}\bigr), \qquad g\in G,\quad h\in H, \label{tgh}
\end{equation}
Blocks of dimension zero are omitted. The matrices satisfy $j(g,h)A_{\bfS}=A_{\bfS}g$. Since trace is invariant under conjugation, this identity shows that $t(g,h)$ preserves the character in (\ref{psiU}). When $n=1$, (\ref{tgh}) means $t(g,h)=j(g,h)$.

\subsection{Double cosets and stabilizers}

Let $\delta_{\Sp}:=-1$ and $\delta_{\SO}:=1$. For a standard parabolic $P$ with general linear factor $\GL_k$, let $m_P(a,h)$ denote its Levi embedding. For the source and target groups, we use (\ref{Levi}). When the ambient group is denoted by $\calH_M$, we also write $m_k(a,h)$. At a signed endpoint, $m_P(a):=m_P(a,1)$ denotes the Levi element whose action on the chosen maximal isotropic subspace is $a$. The standard parabolic $Q_k^{\calH_M}$ has modulus
\begin{equation}
  \delta_{Q_k^{\calH_M}}(m_k(a,h)) =|\det a|^{M-k-\delta_{\calH}}.                  \label{deltaQ}
\end{equation}

For a partition $\lambda$ of $N$, let $\mathcal O_\lambda$ be the corresponding nilpotent orbit in $\mathfrak{gl}_N$. For an admissible representation $\pi$ of $\GL_N(F)$ of finite length, let $\WF(\pi)$ denote its wavefront set. By Theorem~4 of \cite{CFGK23},
\begin{equation}
  \lambda_1\le n \qquad\bigl(\mathcal O_\lambda\in\WF(\Delta(\tau,m+i))\bigr).                     \label{WFn}
\end{equation}
We use the vanishing criterion for generalized Whittaker models in Theorems~A and~E of \cite{GGS17} and \cite{MW87} in the following form. For the unipotent groups and characters specified below, a nonzero twisted Jacquet module implies a nonzero generalized Whittaker model for the corresponding nilpotent matrix. The nilpotent orbit of this matrix must therefore lie in the closure of an orbit in the wavefront set. Consequently, (\ref{WFn}) forces the twisted Jacquet module to vanish whenever that matrix has a Jordan block of size greater than $n$.

For a unipotent matrix group $V$ and a character $\chi$ occurring below, we choose the trace-dual matrix $N_\chi$ with convention $\chi(\exp X)=\psi^{-1}\bigl(\tr(N_\chi X)\bigr)$ for $X\in\Lie V$. Entries outside the root blocks dual to those occurring in $\chi$ are taken to be zero. If $\chi$ is supported on adjacent blocks of a standard unipotent radical, $N_\chi$ has nonzero entries only in the corresponding blocks immediately below the diagonal. If $N_\chi^n\ne0$, its Jordan partition has first part greater than $n$. The vanishing criterion then gives a zero twisted Jacquet module. When the group of integration is not a standard unipotent radical, we verify the hypotheses of the same criterion below. We use root exchange as in Section~7.1 of \cite{GRS11} and the proof of Corollary~12.7 of \cite{GS22}.

We apply Mackey theory to the successive double coset decompositions, as in the proof of Theorem~12.4 of \cite{GS22}. We use the representatives in (2.31)--(2.43), (2.53)--(2.54) of \cite{GS21}. The resulting filtrations are given by the geometric lemma of \cite{BZ76,BZ77}. For $n\ge2$, the first index is an integer $r$ with $0\leq r\leq m(n-1)$. The second index is a tuple $\bar r:=(r_1,\ldots,r_{n-1})$ with $0\leq r_j\leq m$ and $\sum r_j=r$. Put $\ell_j:=m-r_j$. Let $P_{a,b}$ denote the standard parabolic of $\GL_{a+b}$ with Levi subgroup $\GL_a\times\GL_b$. Choose representatives $\eta_j\in\GL_m(F)$ in $P_{r_j,\ell_j}\backslash\GL_m$. Let $\bar\eta:=(\eta_1,\ldots,\eta_{n-1})$.

The remaining orbits are those of $G\times H$ on the maximal isotropic subspaces in the chosen orbit under $\mathcal H_{2(ni+m)}$. They are indexed by $0\leq e\leq d=\lfloor m/2\rfloor$. Here $e$ is the dimension of the intersection with the $m$-dimensional space on which $G$ acts. Denote this intersection by $E$. The quotient $E^\perp/E$ in the space for $G$ has dimension $m-2e$. The split even orthogonal endpoint is described separately below. For $n=1$, only these orbits occur.

Assume $n\ge2$ for the transition matrices below. For $1\le j<n-1$, the rows of $\eta_{j+1}\eta_j^{-1}$ are divided into sizes $r_{j+1},\ell_{j+1}$ and its columns into sizes $r_j,\ell_j$. The rows of $\eta_{n-1}^{-1}$ are divided into sizes $e,m-e$ and its columns into sizes $r_{n-1},\ell_{n-1}$. Write $\eta_{j+1}\eta_j^{-1}=\bigl(\begin{smallmatrix}*&*\\\kappa_j&*\end{smallmatrix}\bigr)$ and $\eta_{n-1}^{-1}=\bigl(\begin{smallmatrix}*&*\\\kappa_\infty&*\end{smallmatrix}\bigr)$. Here $\kappa_j\in\Mat_{\ell_{j+1}\times r_j}(F)$ and $\kappa_\infty\in\Mat_{(m-e)\times r_{n-1}}(F)$.

\begin{lemma}\label{intersection}
Assume $n\ge2$. The representative $\bar\eta$ does not contribute to the twisted Jacquet module unless
\begin{equation}
  \kappa_1=\cdots=\kappa_{n-2}=\kappa_\infty=0,\quad r_1\leq\cdots\leq r_{n-1}\leq e.                    \label{ij}
\end{equation}
In particular, the term indexed by $(\bar r,e)$ is zero if the inequalities in (\ref{ij}) fail.
\end{lemma}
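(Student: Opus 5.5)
The plan is to use the geometric lemma. It expresses the term indexed by $(\bar r,\bar\eta,e)$ as a twisted Jacquet module of a conjugate of $\Delta(\tau,m+i)$, taken along $U\cap w^{-1}Q_Nw$, where $w$ is the double coset representative built from $\bar\eta$ and $e$ as in (2.31)--(2.43) of \cite{GS21}. The character is $\psi_U$ transported by $w$. Following the matrix calculations of \cite[Section~2]{GS21}, I would compute $w\,(U\cap w^{-1}Q_Nw)\,w^{-1}$ and record which parts of it land in the unipotent radical of $Q_N$ and which land in $\GL_N$. The part in the radical acts trivially on the inflated representation. So if $\psi_U$ is nontrivial on the corresponding subgroup, the term vanishes at once. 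If it is trivial there, what remains is a twisted Jacquet module of $\Delta(\tau,m+i)$ along a unipotent subgroup $V\subset\GL_N$ with a character $\chi$.

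First I would treat the conditions $\kappa_j=0$ and $\kappa_\infty=0$. The entry $\kappa_j$ is the lower left block of $\eta_{j+1}\eta_j^{-1}$. The coordinates of $x_j$ that pair with $\kappa_j$ under the trace in (\ref{psiU}) are conjugated into the unipotent radical of $Q_N$. On those coordinates the character restricts to $u\mapsto\psi(\tr(\kappa_j X))$ for the corresponding block $X$. If $\kappa_j\ne0$, this restriction is nontrivial on a subgroup that acts trivially, so the contribution vanishes. The same argument, applied to the $y$-blocks paired with $A_{\bfS}$, disposes of $\kappa_\infty$. This is exactly the mechanism of \cite[Corollary~3.3]{GS21} before cuspidality is invoked, and it is purely a matrix computation.

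Second, assume all the $\kappa$'s vanish. Then $\eta_{j+1}\eta_j^{-1}$ is block upper triangular, so the image of $\bar F^{r_j}$ under it lies inside $F^{r_{j+1}}$ whenever the sizes permit. Suppose instead that $r_j>r_{j+1}$ for some $j$. Then $\kappa_j=0$ is impossible for an invertible matrix, since $r_j$ independent columns would have to lie in an $r_{j+1}$-dimensional space. The same rank argument with $\eta_{n-1}^{-1}$ and $\kappa_\infty$ forces $r_{n-1}\le e$. So in the second step the inequalities are automatic consequences of the vanishing of the $\kappa$'s, and not separate conditions.

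The remaining obstacle is the cases where $\psi_U$ is trivial on the radical part but the $\GL_N$-part still carries a character. For these I would use (\ref{WFn}) with the vanishing criterion of \cite{GGS17}. The resulting $\chi$ on $V$ is supported on adjacent blocks. Its dual matrix $N_\chi$ chains the $n-1$ copies of the blocks of size $r_j$ together with the $e$-block and the $\GL_{ni}$ parts. When the $\kappa$'s are in the configuration that survives the first step but a chain of length greater than $n$ appears, we get $N_\chi^n\neq0$, and the module vanishes. I expect the hardest part to be the bookkeeping: identifying exactly which coordinates land in the radical versus in $\GL_N$ for the representatives of \cite{GS21}. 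I would check this first for $n=2$ and then induct on $j$.
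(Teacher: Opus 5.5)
Your first two steps are correct and follow the paper's main ingredient. The character computation is the matrix calculation (2.31)--(2.43) in the proof of Proposition~2.3 of \cite{GS21}; Corollary~3.3 there is instead the cuspidality step for the orbits with $e>0$. Your rank argument is also right. Since $\eta_{j+1}\eta_j^{-1}$ and $\eta_{n-1}^{-1}$ are invertible, the vanishing of $\kappa_1,\ldots,\kappa_{n-2},\kappa_\infty$ forces $r_1\le\cdots\le r_{n-1}\le e$. Hence, whenever the inequalities fail, every $\bar\eta$ has some nonzero $\kappa$, and the projected twisted Jacquet quotient at that $\bar\eta$ vanishes.

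What is missing is the step from this pointwise vanishing to the vanishing of the term indexed by $(\bar r,e)$. You write ``the term indexed by $(\bar r,\bar\eta,e)$'' as if the geometric lemma produced one subquotient for each $\bar\eta$. It does not: the $\bar\eta$ run over $\prod_j P_{r_j,\ell_j}\backslash\GL_m$, on which the stabilizer (essentially the diagonal $G$) need not have finitely many orbits. The compactly induced term is therefore a family over this space, not a finite extension of pieces indexed by $\bar\eta$. The paper supplies the missing step with the second ingredient of its proof, the injective map on twisted Jacquet quotients in (12.41) of \cite{GS22}. That map shows that a compactly induced term vanishes once every projected Jacquet quotient of its inducing representation vanishes, which is a Bernstein--Zelevinsky stalk-type argument. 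Without it you prove only the first sentence of the lemma, not the ``in particular'' clause.

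You should also drop your third paragraph. The lemma makes no claim when all the $\kappa$'s vanish, and it is not stated under (\ref{WFn}). The wavefront argument you sketch there is the content of Proposition~\ref{rzero}, not of this lemma.
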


\begin{proof}
This follows from the matrix calculation in (2.31)--(2.43), in the proof of Proposition~2.3 of \cite{GS21}, together with the injectivity argument in (12.41) of \cite{GS22}.
\end{proof}

We use the injective map on twisted Jacquet quotients in (12.41) of \cite{GS22}, from the proof of Theorem~12.4. The same construction applies here with middle block of size $ni+m$ instead of $m$. It follows that a compactly induced term is zero if every projected Jacquet quotient of its inducing representation is zero.

\begin{proposition}\label{rzero}
Under (\ref{WFn}), only the double cosets with $r=0$ contribute to $J_s$ when $n\ge2$.
\end{proposition}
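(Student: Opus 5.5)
By Lemma~\ref{intersection}, the only representatives that can contribute satisfy (\ref{ij}). In particular $\kappa_\infty=0$ and $r_1\le\cdots\le r_{n-1}\le e$. So it suffices to show that every pair $(\bar r,e)$ with $r=\sum r_j>0$ and (\ref{ij}) contributes zero. The plan is to show that each projected twisted Jacquet quotient of the inducing representation in the corresponding compactly induced term vanishes. The injectivity statement from (12.41) of \cite{GS22}, recalled just above the proposition, then kills the whole term.

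First I will compute the inducing datum. Conjugating $(U,\psi_U)$ by the representative attached to $(\bar r,\bar\eta,e)$ and intersecting with the Siegel unipotent radical, the contribution factors through a twisted Jacquet module of $\Delta(\tau,m+i)$. This is taken along a unipotent subgroup of $\GL_N$ and a character $\chi_{\bar r,e}$, and both are read off from the block computation (2.31)--(2.43) of \cite{GS21}. Because all $\kappa$'s vanish, the conjugated character restricted to $\GL_N$ should be supported on identity blocks linking consecutive $r_j$-rows. Concretely, the chain $r_1\le r_2\le\cdots\le r_{n-1}\le e$ is fed by the $x_j$ entries (trace characters) and by the $y$ entries through $A_{\bfS}$. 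In addition, there is the reflection of the $E$-part of size $e$ on the other side of the Siegel Levi, because of the form.

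Next I will write down the trace-dual matrix $N_\chi$ and show that it has a Jordan chain of length $n+1$. The natural chain starts in an $r_1$-block, which is nonzero since $r>0$ and $r_1\le r_j$, so $r_1\ge1$ may be arranged after reordering. If instead only the minimal index is positive, I will start from the first $j$ with $r_j>0$. The chain passes through the $n-1$ blocks $r_1,\dots,r_{n-1}$ and then into $E$ (dimension $e\ge r_{n-1}\ge1$). From there it crosses via the pairing on the classical part to the dual copy of $E$ and continues. This gives at least $n+1$ steps, so $N_\chi^n\neq0$. The vanishing criterion of \cite{GGS17,MW87}, combined with (\ref{WFn}), then forces the twisted Jacquet module to be zero.

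The main obstacle is this Jordan-length count. First, the blocks with $r_j=0$ shorten the naive chain, so the argument must handle the first nonzero $r_j$ and still reach length $n+1$. I expect this to use that the identity blocks propagate through all later $r_j\ge r_{j_0}$, and that the isotropic part doubles the tail. Second, the relevant unipotent group need not be a standard radical. For that, I would first try root exchange (Section~7.1 of \cite{GRS11}) to replace it by a standard radical carrying a character supported on adjacent blocks. After that, the remark on $N_\chi$ applies directly.
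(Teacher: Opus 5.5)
Your strategy is the paper's. The paper's proof consists of three citations: the matrix calculation (2.53)--(2.54) of \cite{GS21}, which supplies the projected group and character; the vanishing criterion following (\ref{WFn}); and the injective quotient map from (12.41) of \cite{GS22}. You reduce by Lemma~\ref{intersection} to terms with $r>0$ satisfying (\ref{ij}), exhibit a Jordan block of size $>n$, and conclude in the same way.

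\textbf{The gap.} The Jordan count is the one step you must supply yourself, and it is not established; the version you sketch fails. From $r>0$ and $r_1\le\cdots\le r_{n-1}$ you only get $r_{n-1}\ge1$, not $r_1\ge1$. No reordering is available: (\ref{ij}) allows, for instance, $n=3$, $\bar r=(0,1)$, $e=1$. A chain that must pass through all the blocks $r_1,\dots,r_{n-1}$ then does not exist. Your fallback starts at the first nonzero $r_{j_0}$, passes to $E$, and then makes one more step to a ``dual copy of $E$''. That gives only $n-j_0+2$ vectors, which equals $n$ in the example.

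\textbf{The fix.} The extra length comes from the tail after $E$, not from the head of the chain. Set up notation as follows.
\begin{itemize}
\item Let $L$ be the central maximal isotropic subspace of the representative, $X_j$ the span of the first $jm$ basis vectors, and $R_j$ the image of $L\cap X_j$ in the $j$th block of size $m$.
\item The trace-dual nilpotent $f$ of $\psi_U$ has type $((2n-1)^m,1^{2ni+m})$. Each long chain runs through the $n-1$ blocks, then $W$, then the $n-1$ dual blocks, and $f$ is injective at every step.
\item The nilpotent of the projected character on $\GL(L)$ is the map induced by $f$ between adjacent graded pieces of the flag that $L$ inherits. These pieces are $R_1,\dots,R_{n-1}$, the central space, and the annihilators $\operatorname{Ann}(R_{n-1}),\dots,\operatorname{Ann}(R_1)$ in the dual blocks.
\end{itemize}
Now build the chain in three steps.
\begin{itemize}
\item By $\kappa_\infty=0$, which enters through the $y$-entries and $A_{\bfS}$, you have $f(R_{n-1})\subset E$.
\item For $w\in E$ and $x'\in R_{n-k}$, the conditions $\kappa_j=0$ give $f^kx'\in E$. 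Since $E$ is isotropic, $\langle f^kw,x'\rangle=\pm\langle w,f^kx'\rangle=0$. Hence $f^k(E)\subset\operatorname{Ann}(R_{n-k})$ for $1\le k\le n-1$.
\item For $0\ne x\in R_{n-1}$, the vectors $x,fx,\dots,f^nx$ are therefore nonzero. They lie in the successive pieces $R_{n-1}$, $E$, $\operatorname{Ann}(R_{n-1}),\dots,\operatorname{Ann}(R_1)$.
\end{itemize}
This is a chain of length $n+1$, and it uses only $r_{n-1}\ge1$. For $r=0$ the tail is exactly the chain $E_1\to M_2\to\cdots\to M_n$ visible in (\ref{Ce}), where $a_e$ is the identity on $E_1$; the terms with $r>0$ prepend the block $R_{n-1}$. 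With this chain, your argument goes through once you also check the hypotheses of the criterion for the projected group, which is generally not a standard radical. You can do that by root exchange or by an explicit grading as in Lemma~\ref{D2D3}.
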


\begin{proof}
The result follows from the matrix calculation in (2.53)--(2.54), in the proof of Proposition~2.4 of \cite{GS21}, together with the vanishing criterion following (\ref{WFn}) and the injective local quotient map recalled above.
\end{proof}

For $n\ge2$, we henceforth take $r=0$. Let $\alpha^{(0)}$ and $\gamma_e$ be the matrices in (3.5) and (2.25) of \cite{GS21}, respectively. Let $h_{\gamma_e}:=\diag(I_{m(n-1)},\gamma_e,I_{m(n-1)})$. Put $w_e^{(0)}:=\alpha^{(0)}h_{\gamma_e}$. For $n=1$, the outer blocks of $w_e^{(0)}$ are omitted.

Let $c_e:=m-2e$, so that $G_e=\calH_{c_e}(F)$. Let $W$ and $Y$ be the defining spaces of $G$ and $H$, respectively. Let $\epsilon:=m-2d$. We define
\[
  \beta_e^{(0)}:=
  \begin{pmatrix}
    0_{(d-e)\times(ni+e)}&I_{d-e}\\
I_{ni+e}&0_{(ni+e)\times(d-e)}
  \end{pmatrix},\qquad
  \widehat\beta_e:= \diag\bigl(\beta_e^{(0)},I_\epsilon, (\beta_e^{(0)})^*\bigr)\in H.
\]
The matrix $\widehat\beta_e$ is the one in (3.11) of \cite{GS21}. It lies in the Levi subgroup of the maximal isotropic parabolic of $H$.

Let $w_{\mathrm{GL}}\in\GL_N(F)$ be the matrix in (3.14)--(3.15) of \cite{GS21}. For even $m$, it is
\[
w_{\mathrm{GL}}:=\diag\left(
  \begin{pmatrix}0&I_{ni}\\I_m&0\end{pmatrix},I_{m(n-1)}\right),
\]
where the first matrix has row sizes $ni,m$ and column sizes $m,ni$. For odd $m=2d+1$, it is
\[
w_{\mathrm{GL}}:=\diag\left(
  \begin{pmatrix}
    0&0&I_{ni}&0\\
I_d&0&0&0\\
    0&0&0&\tfrac12\\
    0&I_d&0&0
  \end{pmatrix},I_{m(n-1)}\right),
\]
where the first matrix has row sizes $ni,d,1,d$ and column sizes $d,d,ni,1$. Let $\widehat w_{\mathrm{GL}}:=\diag(w_{\mathrm{GL}},w_{\mathrm{GL}}^*)\in Q_N$. We use the representative $w_e:=\widehat w_{\mathrm{GL}}\,w_e^{(0)}\,t(1,\widehat\beta_e)$. Right multiplication by $t(1,\widehat\beta_e)$ places the projection of the stabilizer to $H$ in the standard parabolic $R_e$. Left multiplication by $\widehat w_{\mathrm{GL}}$ gives the coordinates in (3.16)--(3.17) of \cite{GS21}. All stabilizer actions below use this representative.

The central maximal isotropic subspace represented by $w_e$ has three summands. The first is an $e$-dimensional isotropic subspace of the space for $G$. The second is an $(ni+e)$-dimensional isotropic subspace of the space for $H$. The third projects isomorphically to each of the two remaining nondegenerate spaces of dimension $c_e$. Let $\zeta:=(-1)^{n-1}$. The matrices in (3.9)--(3.10) of \cite{GS21} are
\[
J_e:=
  \begin{cases}
    \begin{pmatrix}0&I_{d-e}\\I_{d-e}&0\end{pmatrix},
    &G=\Sp_{2d},\\[6pt]
    \begin{pmatrix}0&I_{d-e}\\-I_{d-e}&0\end{pmatrix},
    &G=\SO_{2d},\\[6pt]
    \begin{pmatrix}0&0&\zeta I_{d-e}\\0&1&0\\-\zeta I_{d-e}&0&0\end{pmatrix},
    &G=\SO_{2d+1}.
  \end{cases}
\]
The third summand is the graph of a scalar multiple of $J_e$ from the residual source space to the residual target space. This scalar does not affect the stabilizer relation. We define $\iota_e$ on $G_e$ by
\begin{equation}
b^{\iota_e}:=C_{c_e}bC_{c_e}^{-1},\qquad C_{c_e}:=J_e^{-1}K_{c_e}. \label{iota}
\end{equation}
The matrix $J_e$ identifies the two residual forms up to a nonzero scalar. Together with the identity for $K_{c_e}$, this shows that $C_{c_e}$ normalizes the fixed matrix group $G_e$. For odd $c_e=2(d-e)+1$, the formula is explicitly
\[
C_{c_e}=
  \begin{pmatrix}0&0&\zeta I_{d-e}\\0&1&0\\\zeta I_{d-e}&0&0\end{pmatrix},
  \qquad {}^tC_{c_e}w_{c_e}C_{c_e}=w_{c_e}.
\]
We write $\iota:=\iota_0$. The parameter $b$ always belongs to the source model $G_e$. The target residual block is $K_{c_e}bK_{c_e}^{-1}$. Let $\operatorname{pr}_N:Q_N\to\GL_N(F)$ be the Siegel Levi projection. For an element $s=(g,h)$ of this stabilizer, let $\ell_e(s):=\operatorname{pr}_N\bigl(w_et(g,h)w_e^{-1}\bigr)$.

\begin{proposition}\label{stab}
Except when $G=\SO_{2d}$ and $e=d$, the stabilizer at $w_e$ is
\[
S_e=(U_e^G\times U_{R_e})\rtimes (\GL_e\times\GL_{ni+e}\times G_e).
\]
For $a\in\GL_e(F)$, $A\in\GL_{ni+e}(F)$, and $b\in G_e$, the stabilizer elements have the form
\[
  \left(
  \begin{pmatrix}a&*&*\\0&b^{\iota_e}&*\\0&0&a^*\end{pmatrix},
  \begin{pmatrix}A&*&*\\0&K_{c_e}bK_{c_e}^{-1}&*\\0&0&A^*\end{pmatrix}
  \right).
\]
Setting the stars to zero gives the Levi element attached to $(a,A,b)$. Let $\ell_e(a,A,b)$ denote its image under $\ell_e$. Then
\begin{equation}
  \det\ell_e(a,A,b)=\det a\det A.                       \label{detell}
\end{equation}
Thus the source residual coordinate is $b^{\iota_e}$ when the target residual coordinate is parametrized by $b$ as in (\ref{Levi}).
\end{proposition}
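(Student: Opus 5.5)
The plan is to identify the stabilizer of $w_e$ geometrically, as the stabilizer in $G\times H$ of the central maximal isotropic subspace $L_e$ represented by $w_e$. With $r=0$ (Proposition~\ref{rzero}), the outer $\GL_m$ blocks of $t(g,h)$ impose no further condition. The coset $Q_Nw_et(g,h)=Q_Nw_e$ then holds exactly when $j(g,h)$ preserves $L_e$, after the fixed conjugations by $\alpha^{(0)}$, $h_{\gamma_e}$ and $\widehat w_{\mathrm{GL}}$, which only reorder coordinates. Write $L_e=E\oplus E_H\oplus\Gamma$, using the three summands described before the proposition. Here $E\subset W$ has dimension $e$, and $E_H\subset Y$ has dimension $ni+e$; after right multiplication by $t(1,\widehat\beta_e)$, $E_H$ is the standard isotropic subspace of $R_e$. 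The third summand $\Gamma$ is the graph of a scalar multiple of $J_e$ between $E^\perp/E$ and $E_H^\perp/E_H$.

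The key steps are as follows.

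\textbf{Step 1.} Show that $(g,h)$ preserves $L_e$ only if $g$ preserves $E=L_e\cap W$ and $h$ preserves $E_H=L_e\cap Y$. The intersections are intrinsic, so this is immediate. Hence $g\in P_e$ and $h\in R_e$.

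\textbf{Step 2.} Pass to $L_e\cap(E^\perp\oplus E_H^\perp)$ modulo $E\oplus E_H$. This image is $\Gamma$. Preserving it means that the residual components $g_0\in G_e$ and $h_0\in\mathcal H_{c_e}$ satisfy $h_0J_e=J_eg_0$, up to the scalar, which drops out. Conversely, any $(p,q)\in P_e\times R_e$ whose residual parts satisfy this relation preserves $L_e$. Indeed, $L_e$ is then the full preimage of $\Gamma$ intersected appropriately, since $L_e$ is maximal isotropic and contains $E\oplus E_H$.

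\textbf{Step 3.} Parametrize the target residual block as $K_{c_e}bK_{c_e}^{-1}$, as in (\ref{Levi}). Solving the relation gives the source residual block $J_e^{-1}K_{c_e}bK_{c_e}^{-1}J_e=b^{\iota_e}$, by (\ref{iota}). The identity ${}^tK_cw'_cK_c=-w_c$, together with $J_e$ relating the two residual forms up to scalar, shows that $C_{c_e}$ normalizes $G_e$. So $b$ ranges over all of $G_e$, and the Levi parts $a$ and $A$ are unconstrained. Both unipotent radicals act trivially on the graded pieces, so they lie entirely in the stabilizer. This yields the semidirect product $S_e$ and the displayed matrix form.

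\textbf{Step 4.} Prove (\ref{detell}). The element $w_et(g,h)w_e^{-1}$ preserves $L_e$, and its Siegel Levi projection is its action on $L_e$. For a Levi element $(a,A,b)$, this action is block triangular with diagonal blocks $a$ on $E$, $A$ on $E_H$, and on $\Gamma\simeq E^\perp/E$ the block $b^{\iota_e}$. The last has determinant $1$, since $b\in G_e$ lies in a special orthogonal or symplectic group. The fixed conjugations may introduce a scalar factor such as $\tfrac12$ in the odd case, but that factor is independent of the group element and cancels under conjugation. Hence $\det\ell_e=\det a\det A$.

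The main obstacle is the bookkeeping in Step 2 when $m$ is odd. There the middle coordinates of $j(g,h)$ mix $b_2$ and $B_2$ with coefficients $\tfrac12$, and the sign $\zeta=(-1)^{n-1}$ and the form $w'$ enter. I would check directly, with the explicit $A_{\bfS}$ and the relation $j(g,h)A_{\bfS}=A_{\bfS}g$, that the one-dimensional middle line of $\Gamma$ forces $b_2=B_2$ consistently with $C_{c_e}$. Everything else follows by reading off the coordinates (3.16)--(3.17) of \cite{GS21}.
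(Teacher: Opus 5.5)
Your route is essentially the paper's. The paper cites (3.7)--(3.11) of \cite{GS21} for the stabilizer, extracts the residual relation $h_0J_e=J_eg_0$, and solves it with $h_0=K_{c_e}bK_{c_e}^{-1}$ to get $g_0=b^{\iota_e}$. It then reads the determinant off diagonal blocks. Your Steps~1--3 replace the citation with an intrinsic derivation from the three-summand description of $L_e$: intersect with $W$ and $Y$, then use the graph in the residual quotients. That derivation is correct and self-contained.

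One correction is needed in Step~4. For $n\ge2$, $\ell_e(a,A,b)$ is the action of $t(g,h)$ on the full $N$-dimensional maximal isotropic space $w_e^{-1}X_0$, where $X_0$ is the space stabilized by $Q_N$. It is not the action on the $(ni+m)$-dimensional space $L_e$ alone, as you wrote. The remaining $m(n-1)$ coordinates are the outer blocks, on which $t(g,h)$ acts by $n-1$ copies of $g$ or of $g^*$. You must note that these contribute $\det(g)^{\pm(n-1)}=1$, because $G\subset\SL_m(F)$. This is the second sentence of the paper's determinant argument, and with it (\ref{detell}) follows.

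Two smaller remarks. The matrix $h_{\gamma_e}$ does not merely reorder coordinates; it is what creates the graph summand. Since you work with the intrinsic subspace, nothing in your argument depends on that remark. Likewise, once the graph description is taken from \cite{GS21} as the paper does, the explicit odd-case check you plan at the end is unnecessary. The relation $j(g,h)A_{\bfS}=A_{\bfS}g$ concerns $\psi_U$, not the stabilizer.
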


\begin{proof}
The stabilizer calculation is (3.7)--(3.11) of \cite{GS21}. Right translation by $t(1,\widehat\beta_e)$ places its target projection in $R_e$. If $g_0$ and $h_0$ are the residual classical blocks, that calculation gives $h_0J_e=J_eg_0$. In the coordinates of (\ref{Levi}), we have $h_0=K_{c_e}bK_{c_e}^{-1}$. Hence $g_0=b^{\iota_e}$.

On the central maximal isotropic space, the three diagonal blocks have determinants $\det a$, $\det A$, and $\det g_0=1$. Each of the other $n-1$ blocks carries a defining $G$-action, whose determinant is one. Thus $\det\ell_e(a,A,b)=\det a\det A$. Left multiplication by $\widehat w_{\mathrm{GL}}$ conjugates the projected action and does not change its determinant.
\end{proof}

Assume now that $G=\SO_{2d}$. Let $d_H:=ni+d$. In a split space of dimension $2a$, fix a maximal isotropic subspace $L^+$ in the chosen special orthogonal orbit. For a maximal isotropic subspace $L$, we define $\varepsilon(L):=\operatorname{codim}_L(L\cap L^+)\pmod2$. The signs $+$ and $-$ mean $\varepsilon(L)=0$ and $1$, respectively. Choose the reference spaces $L_W^+\subset W$ and $L_Y^+\subset Y$ so that $L_W^+\oplus L_Y^+$ belongs to the chosen orbit under the ambient special orthogonal group.

\begin{proposition}\label{evenSO}
Among the maximal isotropic subspaces in the chosen ambient orbit, those with $e=d$ form two $\SO_{2d}\times\SO_{2d_H}$-orbits.  Their stabilizers are
\begin{equation}
S_{d,+}=Q_d^{G,+}\times Q_{d_H}^{H,+},\qquad S_{d,-}=Q_d^{G,-}\times Q_{d_H}^{H,-}.                \label{signs}
\end{equation}
The action of elements of determinant $-1$ in both orthogonal factors exchanges these orbits. The product of the two special orthogonal groups preserves each orbit. For each $e<d$, the subspaces with this value of $e$ form one orbit.
\end{proposition}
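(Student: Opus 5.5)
The plan is to reduce the statement to linear algebra of maximal isotropic subspaces of the orthogonal sum $W\oplus Y$, where $\dim W=2d$ and $\dim Y=2d_H$. Let $L$ be a maximal isotropic subspace of $W\oplus Y$ in the chosen ambient orbit, and put $E:=L\cap W$ and $E':=L\cap Y$. Isotropy and maximality give a standard structure: $L$ is the direct sum of $E$, of $E'$, and of the graph of an anti-isometry $E^\perp/E\to E'^\perp/E'$ (up to the identification in the third summand). Hence $\dim E'=ni+e$, and the residual spaces have dimension $2(d-e)$ on each side.

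For $e<d$, Witt's theorem in the full orthogonal groups $O(W)\times O(Y)$ already gives a single orbit. We then pass to $\SO(W)\times\SO(Y)$ using the stabilizer computed in Proposition~\ref{stab}. Its residual factor $G_e=\SO_{2(d-e)}$ is nontrivial, so the stabilizer in $O(W)\times O(Y)$ contains pairs $(g,h)$ with $\det g=\det h=-1$. Such a pair is obtained by taking a reflection $b$ in the residual source space and its image under the graph identification. We also need an element with $\det g=-1$ and $\det h=1$. This is impossible inside the stabilizer, and I expect that to be the main obstacle. I would handle it as follows. Any $O\times O$-translate of $L$ by an element with $\det g\det h=-1$ leaves the chosen ambient $\SO$-orbit, because the ambient orbit invariant $\varepsilon$ is additive: $\varepsilon(L)\equiv\varepsilon_W(E\oplus\cdots)+\varepsilon_Y(\cdots)$. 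Therefore the subspaces in the chosen ambient orbit form one orbit under the index-two subgroup $\{\det g=\det h\}$. Combining this with the stabilizer element of determinants $(-1,-1)$ shows that $\SO\times\SO$ is already transitive.

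For $e=d$, we have $L=E\oplus E'$ with $E\subset W$ and $E'\subset Y$ maximal isotropic. The sign $\varepsilon(L)$ computed in $W\oplus Y$ relative to $L_W^+\oplus L_Y^+$ equals $\varepsilon(E)+\varepsilon(E')$, since the codimension of the intersection is additive. Membership in the chosen ambient orbit therefore forces $\varepsilon(E)=\varepsilon(E')$. The group $\SO(W)$ acts transitively on each of its two families of maximal isotropic subspaces, and similarly for $\SO(Y)$. This gives exactly two $\SO_{2d}\times\SO_{2d_H}$-orbits, labeled by the common sign. The stabilizer of $L_W^\pm\oplus L_Y^\pm$ is the product of the stabilizers $Q_d^{G,\pm}\times Q_{d_H}^{H,\pm}$, which is (\ref{signs}). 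Elements of determinant $-1$ flip $\varepsilon$ in each factor. Consequently a pair with determinants $(-1,-1)$ exchanges the two orbits and preserves the ambient orbit, while $\SO\times\SO$ preserves each orbit.

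The degenerate case $d=1$ needs a separate remark, since there $Q_1^{G,\pm}=G$. The two isotropic lines of $W$ are still distinguished by $\varepsilon$, so the argument goes through with the stabilizer in $G$ equal to $G$ itself.
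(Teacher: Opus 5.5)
Your proof is correct and is essentially the paper's argument. At $e=d$ you use $L=L_W\oplus L_Y$, additivity of $\varepsilon$, transitivity of each special orthogonal group on a family, and reflections in both factors. For $e<d$ you use Witt extension in $\mathrm O(W)\times\mathrm O(Y)$, the constraint $\det g\det h=1$ from staying in the ambient orbit, and correction by a reflection pair that preserves $E$, $E'$ and the graph, all as the paper does.

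One small point: for $e<d$, justify $\det g\det h=1$ by the standard fact that determinant $-1$ elements of $\mathrm O(W\oplus Y)$ interchange the two ambient families. An additivity formula for $\varepsilon(L)$ does not apply there, because $L$ does not split along $W\oplus Y$.
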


\begin{proof}
At $e=d$, the residual nondegenerate space of dimension $m-2e$ is zero. The subspace is therefore $L_W\oplus L_Y$, where $L_W\subset W$ and $L_Y\subset Y$ are maximal isotropic subspaces. Since the subspace lies in the chosen ambient orbit, $\varepsilon(L_W)+\varepsilon(L_Y)=0$ in $\mathbf Z/2$. The possible pairs of labels are therefore $(+,+)$ and $(-,-)$. Each special orthogonal factor is transitive on the subspaces with a fixed label. Choose a reflection of determinant $-1$ in each orthogonal factor. These reflections switch both labels. Their product has determinant $1$ in the ambient space. The stabilizer of each maximal isotropic subspace is the corresponding signed Siegel parabolic. Their product gives (\ref{signs}).

When $e<d$, the remaining nondegenerate spaces have positive dimension. Witt extension first gives an element of $\mathrm O(W)\times\mathrm O(Y)$ carrying one such subspace to another. Since both subspaces lie in the chosen ambient orbit, the product of its two determinants is one. If both determinants are $-1$, choose a reflection $r$ on the residual source space. The corresponding reflection on the residual target space is $J_erJ_e^{-1}$. The pair of reflections preserves the graph. We extend each reflection by the identity on the two hyperbolic summands of a Witt decomposition. Composing with this pair changes both determinants to $1$. Thus the required element lies in $\SO(W)\times\SO(Y)$, and $e$ is the complete invariant.
\end{proof}

At the even orthogonal endpoint, we write $w_{d,+}:=w_d$. Choose involutive reflections $r_G$ of $W$ and $r_H$ of $Y$ that carry the standard positive isotropic subspaces to the standard negative ones. Each reflection has determinant $-1$. We extend the matrix formulas (\ref{jm}) and (\ref{tgh}) to this pair and put $c:=t(r_G,r_H)$. The product of the two determinants is one, so $c\in\bfS$. The same matrix identities show that $c$ normalizes $U$ and preserves $\psi_U$. Let $w_{d,-}:=w_{d,+}c$. The stabilizer and its Levi action are obtained by conjugation through $(g,h)\mapsto(r_Ggr_G,r_Hhr_H)$. The projected Jacquet group and character are unchanged. Thus the two signed terms use the same Jacquet coefficient below, with the Levi identifications on their respective isotropic subspaces.

\subsection{The Jacquet modules}

Let $W_0:=F^{ni}$. Let $q:=m-e$ and $p:=e$. For $1\le j\le n$, fix a space $M_j:=Q_j\oplus P_j$ with $Q_j:=F^q$ and $P_j:=F^p$. The symbols $Q_j$ and $P_j$ in this block decomposition denote vector spaces. Consider the parabolic of $\GL_N$ whose Levi subgroup acts on the successive summands $W_0\oplus M_1,\ M_2,\ldots,M_n$. Let $V_{ni+m,m^{n-1}}$ be its unipotent radical. For $n\ge2$, we write its elements in blocks corresponding to $W_0,M_1,\ldots,M_n$:
\[
v=\begin{pmatrix}
I_{ni}&0&y_2&y_3&\cdots&y_n\\
    &I_m&x_1&\star&\cdots&\star\\
    &&I_m&x_2&\cdots&\star\\
    &&&\ddots&\ddots&\vdots\\
    &&&&I_m&x_{n-1}\\
    &&&&&I_m
  \end{pmatrix},\qquad
  \begin{gathered}
y_j\in\Mat_{ni\times m}(F),\\
x_j\in\Mat_m(F).
  \end{gathered}
\]
The $(W_0,M_1)$ block is zero because these two spaces form a single Levi block. For $n=2$, the display has just three diagonal blocks. For $2\le j<n$, denote the adjacent block $M_{j+1}\to M_j$ by $x_j$. The $M_1$-component of the first adjacent block is denoted by $x_1:M_2\to M_1$. Let $a_e:=\diag(I_q,0_p)$. Define the character $\psi_e$ by
\begin{equation}
  \psi_e(v):=\psi^{-1}\!\left( \operatorname{tr}(a_ex_1)+\sum_{j=2}^{n-1}\operatorname{tr}x_j\right).
  \label{Ce}
\end{equation}
Its twisted Jacquet module is $J_{V_{ni+m,m^{n-1}},\psi_e}(\Delta(\tau,m+i))$. The character $\psi_e$ is trivial on the $(W_0,M_2)$ block. For $n=1$, the group $V_{ni+m,m^{n-1}}$ is trivial and we set $\psi_e:=1$. Its Jacquet module is then $\Delta(\tau,m+i)$.

Let $\psi'_e$ be the projected character for $w_e^{(0)}$ in (3.2)--(3.4) of \cite{GS21}. For $n\ge2$, the change of basis defining $w_e$ gives $\psi_e(v)=\psi'_e(w_{\mathrm{GL}}^{-1}vw_{\mathrm{GL}})$. The isomorphism of Jacquet quotients is induced by $\xi\mapsto\Delta(\tau,m+i)(w_{\mathrm{GL}})\xi$ on the space of $\Delta(\tau,m+i)$. The stabilizer action is therefore the action through $\ell_e$ defined in Proposition~\ref{stab}.

\begin{proposition}\label{fibre}
Let $\lambda_{\calH}:=(ni+m(2-n)-\delta_{\calH})/2$. Let $W_{e,s}$ denote the representation of $S_e$ that occurs in the compact induction attached to $w_e$. The compact induction is unnormalized. The representation $W_{e,s}$ corresponds to the stabilizer representation $\sigma_{[\tau',e]}$ in (12.47)--(12.53) of \cite{GS22}. The notation distinguishes it from the given representation $\sigma$ of $G$. The underlying space of $W_{e,s}$ is $J_{V_{ni+m,m^{n-1}},\psi_e}(\Delta(\tau,m+i))$. Its action on the Levi subgroup is
\begin{equation}
  \begin{aligned}
W_{e,s}(a,A,b) &={}|\det a\det A|^{s+\lambda_{\calH}}\\
    &\qquad\cdot \bigl[J_{V_{ni+m,m^{n-1}},\psi_e}(\Delta(\tau,m+i))\bigr] \bigl(\ell_e(a,A,b)\bigr).
  \end{aligned}
  \label{rhoe}
\end{equation}
For a general $s_0\in S_e$, the same formula defines $W_{e,s}(s_0)$, with $\ell_e(a,A,b)$ replaced by $\ell_e(s_0)$ and $\det a\det A$ by $\det\ell_e(s_0)$. In particular, the unipotent radical acts through the operators induced by $\Delta(\tau,m+i)(\ell_e(s_0))$ on the Jacquet quotient. The change of variables in the quotient measure contributes the character
\begin{equation}
  \mathfrak j_e(a,A,b) :=|\det a|^{-m(n-1)}|\det A|^{-m(n-1)}.                \label{measure}
\end{equation}
This character is trivial on $G_e$. The formulas apply to both signs at $e=d$ for the even orthogonal group.
\end{proposition}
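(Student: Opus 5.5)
The plan is to run the geometric lemma of Bernstein--Zelevinsky on the double coset $Q_N w_e (U\rtimes t(G\times H))$ and to read off the inducing datum on the stabilizer. The restriction of $\rho_s$ to this double coset, after taking the $(U,\psi_U)$-twisted Jacquet functor, is compactly induced from the stabilizer $S_e$ of Proposition~\ref{stab}. The datum is the $(w_eUw_e^{-1}\cap Q_N,\psi_U^{w_e^{-1}})$-Jacquet module of $\delta_{Q_N}^{1/2}\Delta(\tau,m+i)|\det|^s$, twisted by a measure character. By Proposition~\ref{rzero} only $r=0$ occurs, and the projection of $w_eUw_e^{-1}\cap Q_N$ to the Siegel Levi $\GL_N$ is $V_{ni+m,m^{n-1}}$. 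The conjugated character is $\psi_e$, as already stated after (\ref{Ce}) via $w_{\mathrm{GL}}$. The part of $w_eUw_e^{-1}\cap Q_N$ lying in $U_{Q_N}$ acts trivially, and $\psi_U$ is trivial on it. This is the content of the Ginzburg--Soudry computation (3.2)--(3.4), and it is what (12.47)--(12.53) of \cite{GS22} records as $\sigma_{[\tau',e]}$. This identifies the underlying space of $W_{e,s}$ and shows that $S_e$ acts through $\ell_e$.

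Next I compute the characters. The factor $|\det|^s$ evaluated at $\ell_e(a,A,b)$ gives $|\det a\det A|^s$ by (\ref{detell}). The modulus $\delta_{Q_N}^{1/2}$ of the Siegel parabolic, from (\ref{deltaQ}) with $M=2N$ and $k=N$, is $|\det|^{(N-\delta_{\calH})/2}$, which again depends only on $\det\ell_e=\det a\det A$. Unnormalized compact induction from the stabilizer of the orbit of $U\rtimes t(G\times H)$ introduces the Jacobian of $\operatorname{Ad}(w_e\,\cdot\,w_e^{-1})$. This Jacobian acts on the quotient $U/(U\cap w_e^{-1}Q_Nw_e)$, the part of $U$ integrated out. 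I would compute it block by block. $A$ and $a$ act on the $n-1$ pairs of off-diagonal blocks of size $m\times(ni+e)$ and $m\times e$ that are moved out of $Q_N$, while the action of $G_e$ is unimodular. The expected result is the character $\mathfrak j_e$ of (\ref{measure}), trivial on $G_e$ since $G_e$ is a classical group.

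Adding the exponents, $(N-\delta_{\calH})/2-m(n-1)=(ni+m(2-n)-\delta_{\calH})/2=\lambda_{\calH}$, using $N=ni+m+m(n-1)$. This gives (\ref{rhoe}). For a general stabilizer element $s_0$, every character involved factors through $\det\ell_e(s_0)$, since each is a power of $|\det|$ on the Levi of $Q_N$ or comes from a unimodular unipotent part. So the same formula holds with $\ell_e(s_0)$. For $G=\SO_{2d}$ and $e=d$, the minus term is obtained by conjugating by $c=t(r_G,r_H)$, which normalizes $U$, fixes $\psi_U$ and preserves the determinants, so the formulas transfer unchanged.

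The main obstacle is the Jacobian (\ref{measure}): one has to identify exactly which root blocks of $U$ lie outside $w_e^{-1}Q_Nw_e$ and how $(a,A)$ scales them. I would check this against the explicit coordinates (3.16)--(3.17) of \cite{GS21}, first in the case $n=2$, where the count gives $|\det a\det A|^{-m}$, and then argue that each additional $m$-block contributes one more factor $|\det a\det A|^{-m}$.
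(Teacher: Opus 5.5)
Your proposal is correct and follows essentially the same route as the paper: you identify the coefficient through the geometric lemma and (3.2)--(3.4) of \cite{GS21}, take $|\det a\det A|^{s+(N-\delta_{\calH})/2}$ from the Siegel induction, subtract the Jacobian exponent $m(n-1)$, and transfer the result to the second type~$D$ endpoint by the simultaneous reflections. The only difference is that the paper computes the Jacobian in one step: it writes the coordinates of $U_{e,0}\backslash U$ as $x\in\Hom(L,X)$ together with a symmetric or alternating $y$ on $X$, acted on by $x\mapsto Txh_L^{-1}$ and $y\mapsto Ty\,{}^tT$ with $\det T=1$ and $\det h_L=\det a\det A$, and this also handles the symmetric/alternating block and the row action on $X$, which your block-by-block list omits (both contribute trivially, so your answer is unaffected).
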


\begin{proof}
The projected group and character are (3.2)--(3.4) of \cite{GS21}, after the change of coordinates in (3.16)--(3.17) there. In particular, the $(W_0,M_1)$ block does not occur in this Jacquet functor. The local coefficient calculation for $i=0$ is (12.53) of \cite{GS22}. We compute the determinant factors for the present block sizes.

Let $U_{e,0}:=U\cap w_e^{-1}Q_Nw_e$. Let $X$ be the sum of the first $n-1$ blocks of size $m$. Let $L$ be the central maximal isotropic space. Then $\dim X=m(n-1)$ and $\dim L=ni+m$. In bases adapted to $L$, the free coordinates of $U_{e,0}\backslash U$ consist of $x\in\Hom(L,X)$ and a symmetric or alternating matrix $y$ on $X$, as in (3.42) of \cite{GS21}. Let $T$ and $h_L$ be the stabilizer Levi actions on $X$ and $L$. Conjugation sends these coordinates to $x\longmapsto Txh_L^{-1}$ and $y\longmapsto Ty\,{}^tT$. We have $\det T=1$ and $\det h_L=\det a\det A$ by Proposition~\ref{stab}. The quotient measure therefore contributes $|\det T|^{\dim L}|\det h_L|^{-\dim X}=|\det a\det A|^{-m(n-1)}$. The symmetric or alternating coordinates contribute one because $\det T=1$. The quotient integral uses this Jacobian as in (3.46) of \cite{GS21}. For $n=1$, $X=0$ and this factor is one.

By (\ref{deltaQ}), normalized Siegel induction contributes $|\det a\det A|^{s+(N-\delta_{\calH})/2}$. Consequently, the total exponent is $s+\frac{N-\delta_{\calH}}2-m(n-1)=s+\frac{ni+m(2-n)-\delta_{\calH}}2$. This proves (\ref{rhoe}) and (\ref{measure}). The simultaneous reflections defining the second orthogonal endpoint preserve these determinant factors.
\end{proof}

We use the following fact for additive matrix groups; see \cite{BZ76,BZ77}. Let $D$ be such a group and let $D'$ be a vector subgroup of $D$. Let $V$ be a smooth representation of $D$. If $J_{D,\chi}(V)=0$ for every smooth character $\chi$ with $\chi|_{D'}\ne1$, then $D'$ acts trivially on $V$. This is the Fourier argument used in the proof of Proposition~12.6 of \cite{GS22}.

Write $\mathcal C_e:=J_{V_{ni+m,m^{n-1}},\psi_e}(\Delta(\tau,m+i))$. Let $D_1$ be the unipotent subgroup consisting of the following matrices, with the same block sizes:
\[
d_1(y):=\begin{pmatrix}
I_{ni}&y&0&\cdots&0\\
    &I_m&0&\cdots&0\\
    &&I_m&&0\\
    &&&\ddots&\vdots\\
    &&&&I_m
  \end{pmatrix},\qquad y\in\Mat_{ni\times m}(F).
\]
For $n\ge2$, the subgroup $D_1$ normalizes $V_{ni+m,m^{n-1}}$. Its conjugation changes only blocks on which $\psi_e$ is trivial. Thus $D_1$ acts on $\mathcal C_e$ in this case. We define the subgroup $D_Q:=\left\{d_1\!\left((y_Q,0_{ni\times p})\right): y_Q\in\Mat_{ni\times q}(F)\right\}$. As in (3.18)--(3.19) of \cite{GS21}, we write a Fourier parameter for the trace pairing with $y$ as $A=\binom{A_Q}{A_P}:W_0\longrightarrow Q_1\oplus P_1$. Here $A_Q\in\Mat_{q\times ni}(F)$ and $A_P\in\Mat_{p\times ni}(F)$. For $n\ge2$, the character $y\mapsto\psi^{-1}(\tr(Ay))$, together with $\psi_e$, is represented under the trace pairing by
\begin{equation}
  \begin{gathered}
N_A:=\begin{pmatrix}
      0&&&&&\\
A&0&&&&\\
      &a_e&0&&&\\
      &&I_m&\ddots&&\\
      &&&\ddots&0&\\
      &&&&I_m&0
    \end{pmatrix},\qquad
a_e=\begin{pmatrix}I_q&0\\0&0_p\end{pmatrix},\\[4pt]
N_A^n\big|_{W_0}=a_eA=\binom{A_Q}{0_{p\times ni}}.
  \end{gathered}
  \label{Ae}
\end{equation}
The rows and columns correspond to $W_0,M_1,\ldots,M_n$. The formula for $N_A^n|_{W_0}$ uses the chosen identification $M_n\simeq F^m$. For $n=2$, there are no $I_m$-blocks below $a_e$.

For $n=1$, the Speh construction for the character $\tau$ gives $\mathcal C_e\simeq\tau\circ\det$. Hence $D_1$ acts trivially on $\mathcal C_e$. For $n\ge2$, adjoining $D_1$ to $V_{ni+m,m^{n-1}}$ gives the unipotent radical $V_{ni,m^n}$. The character is supported on adjacent blocks, with corresponding nilpotent matrix $N_A$ in (\ref{Ae}). If $A_Q\ne0$, then $N_A^n\ne0$. The associated Jordan partition therefore has first part greater than $n$. The criterion following (\ref{WFn}) gives a zero Jacquet module for every $A$ with $A_Q\ne0$. A character indexed by $A$ is trivial on $D_Q$ precisely when $A_Q=0$. The Fourier argument above therefore shows that $D_Q$ acts trivially on $\mathcal C_e$.

The $P_1$-coordinates will be included in the block $W_0\oplus P_1$ in Section~\ref{coinvariants}.

\subsection{Unipotent invariance and the filtration}

Fix $e>0$. Consider the unipotent radical $U_e^G$ of the parabolic $P_e$. Let $c:=m-2e$. We write $M_j=E_j\oplus C_j\oplus E'_j$ with summands of dimensions $e,c,e$, respectively.  Thus $Q_j=E_j\oplus C_j$ and $P_j=E'_j$. For $Z\in\Mat_{(m-e)\times e}(F)$ and $A\in\Mat_{e\times c}(F)$, put
\[
  \nu_e(Z):=\begin{pmatrix}I_{m-e}&Z\\0&I_e\end{pmatrix},\qquad
  g(A):=\begin{pmatrix}I_e&A&0\\0&I_c&0\\0&0&I_e\end{pmatrix}.
\]
Let $D_2$ and $D_3$ be the subgroups formed by the matrices
\[
  \begin{aligned}
    D_2(Z)&:=\diag(I_{ni},I_m,\nu_e(Z)^{\Delta_{n-1}}),\\
    D_3(A)&:=\diag(I_{ni},g(A)^{\Delta_n}).
  \end{aligned}
\]
For $n\ge2$, both subgroups normalize $V_{ni+m,m^{n-1}}$. The identities $\nu_e(Z)a_e=a_e$ and $g(A)a_e=a_eg(A)$ show that they preserve $\psi_e$. Thus they act on $J_{V_{ni+m,m^{n-1}},\psi_e}(\Delta(\tau,m+i))$ in this case. For
\[
y=\begin{pmatrix}I_e&w&c_0\\0&I_c&w'\\0&0&I_e\end{pmatrix}
  \in U_e^G,
\]
we have $w\in\Mat_{e\times c}(F)$, $w'\in\Mat_{c\times e}(F)$, and $c_0\in\Mat_e(F)$. Put $Z_y:=\binom{c_0-ww'}{w'}\in\Mat_{(m-e)\times e}(F)$. Conjugation into the Siegel Levi gives
\begin{equation}
z(y):=\diag(I_{ni},g(w),y^{\Delta_{n-1}}) =D_3(w)D_2(Z_y). \label{zy}
\end{equation}
Thus $z(y)=\ell_e((y,1))$ for $y\in U_e^G$.

\begin{lemma}\label{D2D3}
Under (\ref{WFn}), $D_2$ and $D_3$ act trivially on $J_{V_{ni+m,m^{n-1}},\psi_e}(\Delta(\tau,m+i))$. The subgroup $D_3$ is trivial when $c=0$.
\end{lemma}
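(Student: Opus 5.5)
The plan is to use the Fourier argument recalled before the definition of $\mathcal C_e$. The groups $D_2$ and $D_3$ are abelian vector groups. Indeed $\nu_e(Z)\nu_e(Z')=\nu_e(Z+Z')$ and $g(A)g(A')=g(A+A')$, and both normalize $V_{ni+m,m^{n-1}}$ and fix $\psi_e$. For $D_3$, if $c=0$ then $A\in\Mat_{e\times0}$, so $D_3$ is trivial; this settles the last sentence. We may assume $n\ge2$: for $n=1$ the module is $\tau\circ\det$, and $D_2$, $D_3$ lie in the derived group, so they act trivially. I treat $D_2$ first and then $D_3$ modulo $D_2$. For $D_3$ the order matters: $g(A)$ and $\nu_e(Z)$ do not commute, and I would use the semidirect product $D_3\ltimes D_2$ only after $D_2$ is known to act trivially.

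\textbf{Step 1 ($D_2$).} Let $\widetilde V:=V_{ni+m,m^{n-1}}\cdot D_2$. For a nontrivial character $\chi_Z$ of $D_2$, given by the trace pairing with matrices $B_j\in\Mat_{e\times(m-e)}(F)$ in the block $E'_j$ to $Q_j$ for $j\ge2$, I must show that $J_{\widetilde V,\psi_e\chi_Z}(\Delta(\tau,m+i))=0$. The dual nilpotent matrix $N$ has subdiagonal blocks $a_e$ and $I_m$, together with extra entries $B_j$ inside the diagonal blocks $M_j$.

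The group $\widetilde V$ is not a standard unipotent radical, so the criterion of \cite{GGS17} does not apply directly. I would first build a grading: a semisimple element $h$ with $\operatorname{ad}(h)N=-2N$, such that $\Lie\widetilde V$ is the sum of the weight $\ge1$ spaces, or is a Lagrangian-type subspace of the weight-one space in the sense of Theorem~E of \cite{GGS17}. I would take $h$ to act on $M_j$ with weights decreasing in $j$, and to split each $M_j=Q_j\oplus P_j$ by shifting the $P_j$ weight by one. Then the entries $B_j$ inside $M_j$ have weight two, like the subdiagonal blocks.

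Next I would compute the radical of the form $\omega_N(X,Y)=\tr(N[X,Y])$ on the weight-one space and check that $\widetilde V$ contains it. This is the main obstacle. The weight-one root blocks between $P_j$ and $Q_{j\pm1}$ must be paired so that $\widetilde V$ is maximal isotropic, and I expect to need a root exchange (as in \cite{GRS11}) to replace some $y$-type coordinates by their transposes before this holds.

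Granting this, I would show $N^n\neq0$ when $B\neq0$. A chain through the $P$-coordinates uses $B_j$ to jump from $E'_j$ into $Q_j$, then continues along $I_m$ and $a_e$. This gives a Jordan block of length at least $n+1$. By (\ref{WFn}) the Jacquet module vanishes, and the Fourier fact gives triviality of $D_2$.

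\textbf{Step 2 ($D_3$).} Now let $\widetilde V:=V_{ni+m,m^{n-1}}D_2D_3$, on which $D_2$ already acts trivially. A character of $D_3$ is a pairing with $C\in\Mat_{c\times e}(F)$ placed diagonally in each $M_j$ (block $C_j\to E_j$). Its dual matrix $N$ gains entries $C$ inside each $M_j$. I would again find a grading, now moving $C_j$ relative to $E_j$, and verify the isotropy and radical conditions as in Step 1.

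Then I would exhibit $N^n\ne0$. Start from $E_n\subset Q_n$ and follow the $I_m$ and $a_e$ blocks down to $M_1$. Use $C$ once to pass from $C$-type to $E$-type coordinates inside some $M_j$, which produces a chain of length $n+1$ inside $\bigoplus_j Q_j$. The computation $N_A^n|_{W_0}$ in (\ref{Ae}) serves as the model. The criterion then gives vanishing for every $C\neq0$, so $D_3$ acts trivially.
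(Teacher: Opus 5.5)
\textbf{There is a genuine gap.} The step you call the main obstacle and then grant is the whole content of the lemma, and it cannot be carried out for $\widetilde V=V_{ni+m,m^{n-1}}D_2$ (or $V_{ni+m,m^{n-1}}D_2D_3$) as it stands.

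The parts you do settle are correct:
\begin{itemize}
\item the reduction to the Fourier argument;
\item the decision to treat $D_2$ before $D_3$;
\item the cases $n=1$ and $c=0$;
\item $N^n\ne0$. In fact $N^n$ restricted to $Q_1\to P_n$, resp.\ $E_1\to C_n$, equals $\sum_jB_j$. This sum is the only parameter of a character of the diagonal group; the individual $B_j$ are not independent.
\end{itemize}

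\textbf{Your grading is not admissible.} Shifting $P_j$ by one puts $\Lie D_2$ in degree $1$ and the $B_j$ in degree $-1$, not $-2$. An $f$ of degree $-2$ satisfies $\tr(fX)=0$ for $X$ of degree $1$. So such an $f$ cannot represent a nontrivial character of $D_2$ if $\Lie D_2\subset\mathfrak g_1^S$. The shift must therefore be two, as in the paper's $S_2$.

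But then the identity blocks of $N$ force the $(P_j,Q_{j+1})$-blocks of $V_{ni+m,m^{n-1}}$, for $2\le j<n$, to have degree $0$. Similarly, for $D_3$ the blocks $x_j^{21}$ (row $C_j$, column $E_{j+1}$) have degree $0$ for every $1\le j<n$. So for any grading diagonal in the given basis with $N$ of degree $-2$, your integration group is not contained in $\mathfrak g^S_{\ge1}$. It is therefore never of the form $\exp(\mathfrak g^S_{>1}\oplus\mathfrak l)$ that the criterion following (\ref{WFn}) requires.

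\textbf{The missing idea is a root exchange that removes these degree-zero blocks.} The paper exchanges them, via commutator pairings such as (\ref{commutator}), against independent copies $Z_2,\ldots,Z_n$ of the diagonal variable of $D_2$, resp.\ $A_1,\ldots,A_n$ of $D_3$. This moves the entire character onto the last copy.

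For $D_2$ the result is $V_2'=\exp\mathfrak g^{S_2}_{>1}$ with $\mathfrak g^{S_2}_1=0$, and the criterion then applies directly.

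For $D_3$, ``as in Step 1'' does not suffice. After the exchanges the group is $\exp(\mathfrak g^{S_3}_{>1}\oplus\mathfrak l_3)$, and $\mathfrak l_3$ is not maximal isotropic. The radical $\mathfrak r_{3,B}$ of $\omega_{N_B}$ on $\mathfrak g_1^{S_3}$ contains directions outside the group, and each must be adjoined with trivial character by its own argument:
\begin{itemize}
\item $r_0(R)$, via triviality of $D_Q$;
\item $k(T)\in D_2$, via Step 1 (this is where the order $D_2$ before $D_3$ is actually used);
\item $r_1(D)$, via a second, separate vanishing argument with the modified grading $S_{3,L}$.
\end{itemize}
Only after the full radical lies in the group does Lemma~2.5.2 of \cite{GGS17} let the criterion apply with $S_3$ and $N_B$. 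Your proposal contains none of these steps.
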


\begin{proof}
For $n=1$, $\Delta(\tau,m+i)\simeq\tau\circ\det$. Both subgroups have determinant one, so they act trivially. Assume $n\ge2$. Let $V:=V_{ni+m,m^{n-1}}$. The proof combines the root exchange and nilpotent orbit argument in Proposition~3.2 of \cite{GS21} with the bound (\ref{WFn}).

For $D_2$, let $V_2$ be the subgroup of $V$ in which the $(P_j,Q_{j+1})$-blocks vanish for $2\le j<n$. These blocks are adjacent, so these conditions define a subgroup. When $n=2$, we take $V_2:=V$. Fix $0\ne B\in\Mat_{e\times(m-e)}(F)$. We apply the root exchanges in (3.34)--(3.38) of \cite{GS21} to the character $D_2(Z)\mapsto\psi^{-1}(\tr(BZ))$. The additional $W_0$-rows remain in the first block throughout. The resulting subgroup is
\[
V'_2:=\left\{ \diag(I_{ni},I_m,\nu_e(Z_2),\ldots,\nu_e(Z_n))v:
  \begin{array}{l}
v\in V_2,\\[-2pt]
Z_k\in\Mat_{(m-e)\times e}(F)
  \end{array}\right\}.
\]
On the displayed element, the character is $\psi_e(v)\psi^{-1}(\tr(BZ_n))$. By root exchange, its Jacquet module is zero if and only if the Jacquet module for the chosen character of $D_2$ is zero. Put $q:=m-e$ and $p:=e$. In block sizes $q,p$, define $C_B:=\bigl(\begin{smallmatrix}0_q&0\\B&0_p\end{smallmatrix}\bigr)$. In the order $W_0,M_1,\ldots,M_n$, the trace-dual matrix of the resulting character is
\[
N_B:=\begin{pmatrix}
    0_{ni}&0&0&\cdots&0\\
    0&0_m&0&\cdots&0\\
    0&a_e&0_m&\ddots&\vdots\\
    \vdots&0&\ddots&\ddots&0\\
    0&0&\cdots&I_m&C_B
  \end{pmatrix}.
\]
The first block below the diagonal on $M_1,\ldots,M_n$ is $a_e$. Each subsequent block below the diagonal is $I_m$. For $n=2$, the last two block rows and columns form $\bigl(\begin{smallmatrix}0_m&0\\a_e&C_B\end{smallmatrix}\bigr)$. The restriction of $N_B^n$ from $M_1$ to $M_n$ has matrix $C_Ba_e=C_B\ne0$. Thus the Jordan partition of $N_B$ has first part greater than $n$.

To apply the vanishing criterion following (\ref{WFn}), we verify its hypotheses for $V'_2$. Let $\mathfrak c_2:=\Lie V'_2$. Let $\mathfrak g:=\mathfrak{gl}_N$. For a diagonal matrix $S$ with rational eigenvalues, let $\mathfrak g_a^S$ be the eigenspace of $[S,\cdot]$ with eigenvalue $a$. We also write $\mathfrak g_{>1}^S:=\bigoplus_{a>1}\mathfrak g_a^S$. For $2\le j\le n$, define
\[
  \begin{aligned}
    S_{2,j}&:=\begin{pmatrix}(2(n-j)+1)I_q&0\\0&(2(n-j)-1)I_p\end{pmatrix},\\
    S_2&:=\diag\bigl((2n-1)I_{ni+m},S_{2,2},\ldots,S_{2,n}\bigr).
  \end{aligned}
\]
The matrix $S_2$ uses the same block order. The exchanged root groups are those in (3.38) of \cite{GS21}, with the first block enlarged from $m$ to $ni+m$. The added $W_0$-rows have weights at least two for $S_2$. Thus
\begin{equation}
  \mathfrak c_2=\mathfrak g_{>1}^{S_2},\qquad \mathfrak g_1^{S_2}=0.                                \label{S2}
\end{equation}
The nonzero blocks of $N_B$ have weight $-2$ for $S_2$. Equation (\ref{S2}) verifies the hypotheses of the vanishing criterion following (\ref{WFn}). Since $N_B^n\ne0$, the Jacquet module for every nontrivial character of $D_2$ is zero. It follows that $D_2$ acts trivially on $J_{V_{ni+m,m^{n-1}},\psi_e}(\Delta(\tau,m+i))$.

If $c=0$, the subgroup $D_3$ is trivial. Assume $c>0$ for the rest of the proof. The matrices in (3.39) of \cite{GS21} give the subgroup $D_3$. We verify the corresponding local vanishing argument. All matrices below use the order $W_0,M_1,\ldots,M_n$. Within each $M_j$, the block sizes are $e,c,e$, in the order $E_j,C_j,E'_j$. Blocks of size zero are omitted. For $n=2$, the intermediate blocks in the banded matrices below are omitted. For $1\le j<n$, we write
\[
x_j=\begin{pmatrix}
x_j^{11}&x_j^{12}&x_j^{13}\\
x_j^{21}&x_j^{22}&x_j^{23}\\
x_j^{31}&x_j^{32}&x_j^{33}
  \end{pmatrix},\qquad x_j^{21}\in\Mat_{c\times e}(F).
\]
Let $V_3$ be the subgroup of $V$ obtained by replacing $x_j^{21}$ by zero for $1\le j<n$. These are adjacent blocks, so the zero conditions define a subgroup. For $C\in\Mat_{c\times e}(F)$, put $\mathsf Y(C):=\bigl(\begin{smallmatrix}0&0&0\\C&0&0\\0&0&0\end{smallmatrix}\bigr)$. For $1\le j<n$, define
\[
Y_j(C):=\begin{pmatrix}
I_{ni+(j-1)m}&0&0&0\\
    0&I_m&\mathsf Y(C)&0\\
    0&0&I_m&0\\
    0&0&0&I_{(n-j-1)m}
  \end{pmatrix}.
\]
For $1\le j<n$, let $Y_j:=\{Y_j(C):C\in\Mat_{c\times e}(F)\}$ be the corresponding unipotent subgroup. For $A\in\Mat_{e\times c}(F)$ and $1\le j\le n$, put $g_j(A):=\diag(I_{ni+(j-1)m},g(A),I_{(n-j)m})$. Let $0\ne B\in\Mat_{c\times e}(F)$ and consider the character $D_3(A)\longmapsto\psi^{-1}(\tr(BA))$. We use the convention $[x,y]:=xyx^{-1}y^{-1}$ for group commutators. Direct multiplication gives
\begin{equation}
  \setlength{\arraycolsep}{4pt}
  \begin{aligned}
    [Y_j(C),g_j(A)]&=\begin{pmatrix}
I_{ni+(j-1)m}&0&0&0\\
      0&I_m&-\diag(AC,0_{m-e})&0\\
      0&0&I_m&0\\
      0&0&0&I_{(n-j-1)m}
    \end{pmatrix},\\
    \psi_e([Y_j(C),g_j(A)])&=\psi(\tr(CA)).
  \end{aligned}
  \label{commutator}
\end{equation}
For $j=1$, the same formula holds because $a_e$ is the identity on $E_1$. The trace pairing is nondegenerate. We may therefore exchange $Y_1$ with the subgroup consisting of $g_1(A_1)$, using root exchange in the form cited after (\ref{WFn}). The first internal block is then arbitrary. The remaining $n-1$ internal blocks still have the common value $g(A)$. The character on these internal blocks is $\psi^{-1}(\tr(BA))$.

For $j=2,\ldots,n-1$, exchange $Y_j$ with the subgroup consisting of $g_j(A_j)$. Before the $j$th exchange, the blocks with indices less than $j$ are independent. The blocks with indices $j,\ldots,n$ have the same value $g(A)$. The block $x_{j-1}^{21}$ has already been removed. Thus the only remaining adjacent block that pairs with $g_j(A_j)$ is $x_j^{21}$, with pairing (\ref{commutator}).

The common subgroup is obtained by setting $x_k^{21}=0$ in $V$ for $1\le k\le j$ and adjoining the first $j-1$ independent internal blocks and the common internal blocks with indices $j,\ldots,n$. Both exchanged subgroups normalize this common subgroup. The two contributions from the common values of $g(A)$ in positions $j$ and $j+1$ cancel in the character. The remaining commutators contribute only blocks with trivial character. At each step, the exchange replaces $x_j^{21}$ by zero and allows the $j$th internal block to vary independently. At the last step, the common variable $A$ is the variable in the $n$th block. The resulting subgroup is
\[
V'_3:=\left\{ \diag(I_{ni},g(A_1),\ldots,g(A_n))v:
  \begin{array}{l}
v\in V_3,\\[-2pt]
A_k\in\Mat_{e\times c}(F)
  \end{array}\right\}.
\]
On the displayed element, the character is $\psi_e(v)\psi^{-1}(\tr(BA_n))$. Root exchange again reduces the required vanishing to the Jacquet module for this subgroup and character. Put $b_B:=\bigl(\begin{smallmatrix}0&0&0\\B&0&0\\0&0&0\end{smallmatrix}\bigr)$. The resulting character has nilpotent matrix
\[
N_B:=\begin{pmatrix}
    0&0&0&\cdots&0&0\\
    0&0&0&\cdots&0&0\\
    0&a_e&0&\cdots&0&0\\
    \vdots&0&I_m&\ddots&\vdots&\vdots\\
    0&\vdots&\ddots&\ddots&0&0\\
    0&0&\cdots&0&I_m&b_B
  \end{pmatrix}.
\]
For $n=2$, this matrix is $\left(\begin{smallmatrix}0&0&0\\0&0&0\\0&a_e&b_B\end{smallmatrix}\right)$. By matrix multiplication, the block of $N_B^n$ in row $M_n$ and column $M_1$ is
\begin{equation}
b_Ba_e=\begin{pmatrix}0&0&0\\B&0&0\\0&0&0\end{pmatrix}\ne0.
  \label{cube}
\end{equation}
Thus the Jordan partition of $N_B$ has first part greater than $n$.

To verify the hypotheses of the local vanishing criterion, we calculate the pairing on the space of weight one. Let $\mathfrak c_3:=\Lie V'_3$. For $1\le j\le n$, define $s_j:=\diag\bigl((2(n-j)+1)I_e,(2(n-j)-1)I_c,2(n-j)I_e\bigr)$. Put $S_3:=\diag\bigl((2n-2)I_{ni},s_1,\ldots,s_n\bigr)$. The nonzero blocks of $N_B$ have weight $-2$ for $S_3$. As in Definition~2.5.1 of \cite{GGS17}, the pairing on $\mathfrak g_1^{S_3}$ is defined by $\omega_{N_B}(X,Y):=\operatorname{tr}(N_B[X,Y])$, where $[X,Y]=XY-YX$. The chosen bases identify $E_j$ and $E'_j$ with $F^e$. The next four matrices use row and column block sizes $e,c,e$. For $A,U\in\Mat_e(F)$, $D\in\Mat_{e\times c}(F)$, and $V\in\Mat_{c\times e}(F)$, put
\[
  \begin{aligned}
    \mathsf A(A)&:=\begin{pmatrix}0&0&A\\0&0&0\\0&0&0\end{pmatrix},&
    \mathsf Z(D)&:=\begin{pmatrix}0&0&0\\0&0&0\\0&D&0\end{pmatrix},\\
    \mathsf U(U)&:=\begin{pmatrix}0&0&0\\0&0&0\\U&0&0\end{pmatrix},&
    \mathsf V(V)&:=\begin{pmatrix}0&0&0\\0&0&V\\0&0&0\end{pmatrix}.
  \end{aligned}
\]
For $R\in\Mat_{ni\times c}(F)$, put $\widehat R:=\bigl(0_{ni\times e},\,R,\,0_{ni\times e}\bigr)$. For $X\in\Mat_{ni\times e}(F)$, put $\widehat X:=\bigl(X,\,0_{ni\times c},\,0_{ni\times e}\bigr)$. Their column sizes are $e,c,e$. The radical of the pairing is
\begin{equation}
  \mathfrak r_{3,B}:=\left\{
  \begin{pmatrix}
    0&\widehat R&0&\cdots&0\\
    0&\mathsf Z(D)&\mathsf V(-BT)&\cdots&0\\
    0&0&\mathsf A(T)&\ddots&\vdots\\
    \vdots&\vdots&\ddots&\ddots&\mathsf V(-BT)\\
    0&0&\cdots&0&\mathsf A(T)
  \end{pmatrix}:
  \begin{array}{l}
R\in\Mat_{ni\times c}(F),\\
D\in\Mat_{e\times c}(F),\\
T\in\Mat_e(F)
  \end{array}\right\}.
  \label{rad3}
\end{equation}
Here $\mathsf V(-BT)$ occupies every block immediately above the diagonal of the $M_1\oplus\cdots\oplus M_n$ part. All other blocks above the diagonal in that part are zero. To verify this formula, let $U_j\in\Mat_e(F)$ and $V_j\in\Mat_{c\times e}(F)$ for $1\le j<n$. Put $\mathsf L_j:=\bigl(\begin{smallmatrix}0&0&0\\0&0&V_j\\U_j&0&0\end{smallmatrix}\bigr)$. Define
\[
  \mathfrak l_3:=\left\{
  \begin{pmatrix}
    0&0&\widehat X&0&\cdots&0\\
    0&0&\mathsf L_1&0&\cdots&0\\
    0&0&0&\mathsf L_2&\ddots&\vdots\\
    \vdots&\vdots&\vdots&\ddots&\ddots&0\\
    0&0&0&\cdots&0&\mathsf L_{n-1}\\
    0&0&0&\cdots&0&0
  \end{pmatrix}:
  \begin{array}{l}
X\in\Mat_{ni\times e}(F),\\
U_j\in\Mat_e(F),\\
V_j\in\Mat_{c\times e}(F),\\
    1\le j<n
  \end{array}\right\}.
\]
The weights of the adjacent blocks then give $\mathfrak c_3=\mathfrak g_{>1}^{S_3}\oplus\mathfrak l_3$. For $i=0$, the first row and column of blocks are absent. In particular, the parameter $X$ is absent. We use the following root matrices:
\[
  \begin{aligned}
A_j(A)&:=\diag(0_{ni+(j-1)m},\mathsf A(A),0_{(n-j)m}),\\
Z_j(D)&:=\diag(0_{ni+(j-1)m},\mathsf Z(D),0_{(n-j)m}) &&(1\le j\le n),\\
U_j(U)&:=\diag\left(0_{ni+(j-1)m},
    \begin{pmatrix}0&\mathsf U(U)\\0&0\end{pmatrix},
    0_{(n-j-1)m}\right),\\
V_j(V)&:=\diag\left(0_{ni+(j-1)m},
    \begin{pmatrix}0&\mathsf V(V)\\0&0\end{pmatrix},
    0_{(n-j-1)m}\right) &&(1\le j<n).
  \end{aligned}
\]
The central $2\times2$ matrices in these last two formulas have block sizes $m,m$. For $Y\in\Mat_{e\times ni}(F)$, put $\widehat Y:=\bigl(\begin{smallmatrix}Y\\0_{c\times ni}\\0_{e\times ni}\end{smallmatrix}\bigr)\in\Mat_{m\times ni}(F)$. In the order $W_0,M_1,M_2,M_3\oplus\cdots\oplus M_n$, define
\[
H(X):=\begin{pmatrix}
    0&0&\widehat X&0\\0&0&0&0\\0&0&0&0\\0&0&0&0
  \end{pmatrix},\qquad
K(Y):=\begin{pmatrix}
    0&0&0&0\\\widehat Y&0&0&0\\0&0&0&0\\0&0&0&0
  \end{pmatrix}.
\]
The last row and column of blocks are absent when $n=2$. Direct matrix multiplication gives
\begin{equation}
  \omega_{N_B}(H(X),K(Y))=-\tr(XY). \label{B0}
\end{equation}
For $1\le j<n$, it also gives
\begin{equation}
  \begin{aligned}
    \omega_{N_B}(U_j(U),A_j(A))&=-\tr(UA),\\
    \omega_{N_B}(V_j(V),Z_{j+1}(D))&=\tr(VD).
  \end{aligned}
  \label{pivots}
\end{equation}
For $2\le j<n$, the adjacent identity blocks of $N_B$ give $\omega_{N_B}(U_j(U),A_{j+1}(A))=\tr(UA)$ and $\omega_{N_B}(V_j(V),Z_j(D))=-\tr(VD)$. At $j=1$, the corresponding pairings are $\omega_{N_B}(U_1(U),A_2(A))=0$ and $\omega_{N_B}(V_1(V),Z_1(D))=0$. These equalities follow from the zero block of $a_e$ on $E'_1$. The last diagonal block $b_B$ gives $\omega_{N_B}(A_n(A),Z_n(D))=\tr(BAD)$. All other pairings between the displayed root spaces are zero, apart from those obtained by skew symmetry.

For a vector in $\mathfrak g_1^{S_3}$, let $A_j,Z_j,U_j,V_j$ denote its matrix coordinates in the four families of root spaces above. Pairing with every $U_j(U)$ and $V_j(V)$ gives $A_1=0$, $A_2=\cdots=A_n=T$, and $Z_2=\cdots=Z_n=0$ by nondegeneracy of the trace pairing. Pairing with every $A_j(A)$ then gives $U_j=0$ for $1\le j<n$. Pairing with $Z_n(D)$ gives $V_{n-1}+BT=0$. Pairing with $Z_j(D)$ gives $V_{j-1}=V_j$ for $2\le j<n$. Consequently, $U_j=0$ and $V_j=-BT$ for $1\le j<n$. For $n=2$, there are no intermediate equations $V_{j-1}=V_j$. Equation (\ref{B0}) forces the $H(X)$ and $K(Y)$ coordinates to vanish. The $(W_0,C_1)$ and $(E'_1,C_1)$ coordinates are unrestricted. Conversely, these conditions make every displayed pairing zero. Hence these conditions define the radical in (\ref{rad3}). The nondegenerate trace pairings on the quotient by the radical also show that $\mathfrak l_3+\mathfrak r_{3,B}$ is a maximal isotropic subspace of $\mathfrak g_1^{S_3}$ for $\omega_{N_B}$.

Consider the matrices
\[
  \begin{aligned}
r_0(R)&:=\diag\left(
    \begin{pmatrix}I_{ni}&\widehat R\\0&I_m\end{pmatrix},I_{(n-1)m}\right),
    &R&\in\Mat_{ni\times c}(F),\\
r_1(D)&:=\diag(I_{ni},I_m+\mathsf Z(D),I_{(n-1)m}), &D&\in\Mat_{e\times c}(F),\\
k(T)&:=\diag\bigl(I_{ni+m}, \underbrace{I_m+\mathsf A(T),\ldots,I_m+\mathsf A(T)}_{n-1\text{ blocks}}\bigr), &T&\in\Mat_e(F).
  \end{aligned}
\]
The matrices $r_0(R)$ lie in $D_Q$. They therefore act trivially before the exchanges, by the calculation following (\ref{Ae}). Let $R\in\Mat_{ni\times c}(F)$ and $C\in\Mat_{c\times e}(F)$. During the $D_3$ exchanges, direct multiplication gives
\[
  [r_0(R),Y_1(C)]=\begin{pmatrix}
I_{ni}&0&\bigl(RC,\,0_{ni\times c},\,0_{ni\times e}\bigr)&0\\
    0&I_m&0&0\\
    0&0&I_m&0\\
    0&0&0&I_{(n-2)m}
  \end{pmatrix}.
\]
The corresponding unipotent subgroup belongs to the common subgroup used in root exchange. The character is trivial on it. The matrices $r_0(R)$ commute with $Y_j(C)$ for $j>1$ and with every $g_j(A)$. Since $k(T)=D_2\!\left(\binom{T}{0_{c\times e}}\right)$, these matrices belong to $D_2$. They act trivially before the exchanges by the first part of this proof. For $1\le j<n$, direct multiplication gives
\[
  [k(T),Y_j(C)]=\begin{pmatrix}
I_{ni+(j-1)m}&0&0&0\\
    0&I_m&\mathsf V(-CT)&0\\
    0&0&I_m&0\\
    0&0&0&I_{(n-j-1)m}
  \end{pmatrix}.
\]
This unipotent subgroup also belongs to the common subgroup and has trivial character. The matrices $k(T)$ commute with every $g_j(A)$. The commutator calculations show that the root exchange maps are equivariant for $r_0(R)$ and $k(T)$. Both families therefore act trivially after root exchange.

We may adjoin $r_0(R)$ with trivial character. This adds the subspace parametrized by $R$ in (\ref{rad3}). We may also adjoin $k(T)$ with trivial character. The blocks $\mathsf V(-BT)$ in (\ref{rad3}) already belong to $\mathfrak c_3$ and have trivial character. Adjoining $k(T)$ therefore adds the subspace parametrized by $T$ in the radical.

The matrices $r_1(D)$ normalize the group obtained after adjoining $r_0(R)$ and $k(T)$. Indeed, brackets with roots of weight one lie in $\mathfrak g_{>1}^{S_3}$. Further brackets have higher weight. Equation (\ref{rad3}) shows that these conjugations preserve the character. Fix $0\ne L\in\Mat_{c\times e}(F)$. Consider the character $r_1(D)\mapsto\psi^{-1}(\tr(LD))$. Put $b_L:=\bigl(\begin{smallmatrix}0&0&0\\0&0&L\\0&0&0\end{smallmatrix}\bigr)$. The nilpotent matrix for this character is
\[
N_{B,L}:=\begin{pmatrix}
    0&0&0&\cdots&0&0\\
    0&b_L&0&\cdots&0&0\\
    0&a_e&0&\cdots&0&0\\
    \vdots&0&I_m&\ddots&\vdots&\vdots\\
    0&\vdots&\ddots&\ddots&0&0\\
    0&0&\cdots&0&I_m&b_B
  \end{pmatrix}.
\]
For $n=2$, this matrix is $\left(\begin{smallmatrix}0&0&0\\0&b_L&0\\0&a_e&b_B\end{smallmatrix}\right)$. By matrix multiplication, the block of $N_{B,L}^n$ in row $C_n$ and column $E'_1$ is $L\ne0$. Thus the Jordan partition of $N_{B,L}$ has first part greater than $n$. Let $s_{1,L}:=\diag\bigl((2n-1)I_e,(2n-3)I_c,(2n-1)I_e\bigr)$. Define $S_{3,L}:=\diag\bigl((2n-1)I_{ni},s_{1,L},s_2,\ldots,s_n\bigr)$. Relative to $S_3$, precisely the blocks $(W_0,C_1),\ (W_0,E_2),\ (E'_1,C_1),\ (E'_1,E_2)$ move from weight one to weight two. The blocks $(E_1,W_0)$ and $(E_1,E'_1)$ move from weight one to weight zero. Removing the corresponding pairings from (\ref{B0})--(\ref{pivots}) leaves the subspace parametrized by $T$ in (\ref{rad3}) as the full radical. We retain the blocks of $\mathfrak l_3$ that still have weight one. We add the subspace parametrized by $T$ in (\ref{rad3}). The remaining pairings in (\ref{pivots}) show that their sum is a maximal isotropic subspace of $\mathfrak g_1^{S_{3,L}}$. The group of integration has Lie algebra equal to this subspace plus $\mathfrak g_{>1}^{S_{3,L}}$. The vanishing criterion following (\ref{WFn}) therefore gives a zero Jacquet module for every $L\ne0$. By the Fourier argument above, the subgroup consisting of the $r_1(D)$ acts trivially. Adjoining it with trivial character therefore does not change the Jacquet module.

After adjoining this subgroup, the integration space contains the full radical in (\ref{rad3}). By Lemma~2.5.2 of \cite{GGS17}, the vanishing criterion following (\ref{WFn}) applies with $S_3$ and $N_B$. By (\ref{cube}), the associated partition has first part greater than $n$. The bound (\ref{WFn}) therefore makes the resulting Jacquet module zero for $B\ne0$. Thus every nontrivial character of $D_3$ gives a zero Jacquet module. It follows that $D_3$ acts trivially on $J_{V_{ni+m,m^{n-1}},\psi_e}(\Delta(\tau,m+i))$.
\end{proof}

\begin{theorem}\label{triviality}
For every orbit indexed by $e>0$, $U_e^G$ acts trivially on $W_{e,s}$. Equivalently, it acts trivially on $J_{V_{ni+m,m^{n-1}},\psi_e}(\Delta(\tau,m+i))$. The assertion holds for both signs at $e=d$ for the even orthogonal group.
\end{theorem}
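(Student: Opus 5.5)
The plan is to reduce the statement to Lemma~\ref{D2D3} by means of the factorization (\ref{zy}). By Proposition~\ref{fibre}, an element $s_0=(y,1)$ with $y\in U_e^G$ acts on $W_{e,s}$ by the scalar $|\det\ell_e(s_0)|^{s+\lambda_{\calH}}$ times the operator induced by $\Delta(\tau,m+i)(\ell_e(s_0))$ on $\mathcal C_e$. Since $\ell_e(s_0)=z(y)$ is unipotent, its determinant is $1$. The scalar factor is therefore trivial. This also accounts for the measure character (\ref{measure}), which is trivial on unipotent elements. So the action of $U_e^G$ on $W_{e,s}$ is exactly the action of $z(U_e^G)$ on $\mathcal C_e=J_{V_{ni+m,m^{n-1}},\psi_e}(\Delta(\tau,m+i))$. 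This proves the claimed equivalence.

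Next I would invoke (\ref{zy}), which writes $z(y)=D_3(w)D_2(Z_y)$ with $D_3(w)\in D_3$ and $D_2(Z_y)\in D_2$. Both subgroups normalize $V_{ni+m,m^{n-1}}$ and preserve $\psi_e$, so each acts on $\mathcal C_e$. The action of $z(y)$ is the composite of their actions. By Lemma~\ref{D2D3}, both act trivially, and hence so does $z(y)$. For $n=1$, there is nothing to check beyond the lemma's first case: $\mathcal C_e\simeq\tau\circ\det$ and $\det z(y)=1$. When $c=m-2e=0$, the block $w$ is absent and $D_3$ is trivial, consistent with the lemma.

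For the even orthogonal endpoint with sign $-$, I would use the description after Proposition~\ref{evenSO}. The stabilizer $S_{d,-}$ and its Levi action are obtained from $S_{d,+}$ by conjugation by $(r_G,r_H)$, while the projected Jacquet group, the character, and the coefficient $\mathcal C_d$ are unchanged. Conjugation by $r_G$ carries $U_d^{G,+}$ to $U_d^{G,-}$. Under the identification at $w_{d,-}=w_{d,+}c$, the image $\ell_d$ of an element of $U_d^{G,-}$ is again of the form $z(y)$ with $y\in U_d^{G,+}$, and here $c=0$. The triviality therefore transfers.

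The only step needing care is verifying that the identification $\ell_e((y,1))=z(y)$ in (\ref{zy}) is compatible with the representative $w_e$, including the left factor $\widehat w_{\mathrm{GL}}$. Once that is accepted, the proof is formal.
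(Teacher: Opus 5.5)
Your proposal is correct and follows the paper's proof: the determinant factor in (\ref{rhoe}) is trivial on unipotents; for $n\ge2$, the factorization (\ref{zy}) and Lemma~\ref{D2D3} make $z(y)$ act trivially on the coefficient; for $n=1$, $\det z(y)=1$ suffices; and the second even orthogonal endpoint follows by simultaneous conjugation by $(r_G,r_H)$. Your explicit check that, at $w_{d,-}=w_{d,+}c$, an element $y\in U_d^{G,-}$ has image $z(r_Gyr_G)$ with $r_Gyr_G\in U_d^{G,+}$ unpacks the paper's one-line remark on outer conjugation.
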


\begin{proof}
For $n\geq2$, the factorization (\ref{zy}) and Lemma~\ref{D2D3} show that $z(y)$ acts trivially on $J_{V_{ni+m,m^{n-1}},\psi_e}(\Delta(\tau,m+i))$. The determinant character in (\ref{rhoe}) is trivial on unipotents. Thus $U_e^G$ acts trivially on $W_{e,s}$.

For $n=1$, $\Delta(\tau,m+i)\simeq\tau\circ\det$. The identity $\det z(y)=1$ proves triviality in this case as well. For the even orthogonal group, simultaneous outer conjugation carries the assertion for one endpoint to the other.
\end{proof}

We define the set of orbit indices by
\[
  \mathfrak E_G:=
  \begin{cases}
    \{0,1,\ldots,d\},&G\text{ symplectic or odd orthogonal},\\
    \{0,1,\ldots,d-1,(d,+),(d,-)\},&G=\SO_{2d}.
  \end{cases}
\]
For a signed index $e=(d,\varepsilon)$, we use the stabilizer $S_{d,\varepsilon}$ in (\ref{signs}). Let $W_{d,\varepsilon,s}$ denote the coefficient in (\ref{rhoe}) at that stabilizer. In numerical formulas, we set $e=d$ at either signed endpoint.

\begin{theorem}\label{geom}
Under (\ref{WFn}), $J_s$ has a finite filtration stable under $G\times H$. The successive quotients occur in increasing order of $e$, starting with $e=0$. Denote them by $J_{e,s}$. Some quotients may be zero. They satisfy
\[
J_{e,s}\simeq\ind_{S_e}^{G\times H}W_{e,s} \qquad(e\in\mathfrak E_G).
\]
For the even orthogonal group, the two terms at $e=d$ may be combined into their direct sum. The coefficient $W_{e,s}$ is given by (\ref{Ce}) and (\ref{rhoe}). For $e>0$, $U_e^G$ acts trivially on $W_{e,s}$.
\end{theorem}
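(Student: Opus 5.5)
We will obtain the filtration by applying the geometric lemma of Bernstein and Zelevinsky to $J_{U,\psi_U}\circ\Ind_{Q_N}^{\bfS}$. Exactness of $J_{U,\psi_U}$ is what makes the filtration survive the Jacquet functor. The first step is to restrict $\rho_s$ to the subgroup $t(G\times H)U$ and stratify $Q_N\backslash\bfS$ by the double cosets $Q_N\backslash\bfS/t(G\times H)U$. We use the representatives of \cite{GS21} recalled above, first indexed by $(r,\bar r,\bar\eta)$ and then by the orbit of $G\times H$ on central maximal isotropic subspaces. There are finitely many such orbits, and the orbit of the open cell is open. We order the orbits so that each union of the first $k$ strata is open; in the central step this ordering is by increasing $e$. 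Restriction to these open unions gives a $t(G\times H)U$-stable filtration of $\rho_s$. Its successive quotients are compactly induced from the stabilizers $t(G\times H)U\cap w^{-1}Q_Nw$, with coefficient the conjugate of $\Delta(\tau,m+i)|\det|^s\delta_{Q_N}^{1/2}$ restricted to $w^{-1}Q_Nw$. Applying the exact functor $J_{U,\psi_U}$ then yields a filtration of $J_s$ by $G\times H$-modules, since $t(G\times H)$ normalizes $U$ and fixes $\psi_U$.

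Next we eliminate terms. For $n\ge2$, Lemma~\ref{intersection} kills every representative violating (\ref{ij}). Proposition~\ref{rzero} then shows, under (\ref{WFn}), that only $r=0$ survives. For these reductions we use the injective local quotient map of (12.41) of \cite{GS22}: a compactly induced term vanishes once each projected Jacquet quotient of its coefficient vanishes. With $r=0$ we have $r_j=0$ for all $j$, so the $\bar\eta$ are trivial. The surviving terms are therefore indexed by the $G\times H$-orbits of central isotropic subspaces. By Proposition~\ref{evenSO}, these orbits are $e=0,\dots,d$, except that the endpoint splits into $(d,\pm)$ for $G=\SO_{2d}$. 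For $n=1$ the central orbits are all there is.

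For each surviving $e$ we identify the term via the standard Mackey-type formula. The twisted Jacquet module of $\ind_{\mathrm{Stab}}^{t(G\times H)U}(\cdot)$ along $(U,\psi_U)$ equals $\ind_{S_e}^{G\times H}$ of the twisted Jacquet module of the coefficient along $U\cap w_e^{-1}Q_Nw_e$. Here we integrate over $U_{e,0}\backslash U$, and the quotient-measure Jacobian (\ref{measure}) enters. Proposition~\ref{stab} and Proposition~\ref{evenSO} give $S_e$. Proposition~\ref{fibre} identifies the coefficient with $W_{e,s}$ on the space $J_{V_{ni+m,m^{n-1}},\psi_e}(\Delta(\tau,m+i))$, which is (\ref{Ce}) and (\ref{rhoe}). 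At the even orthogonal endpoint the two signed orbits are open-closed relative to each other within the $e=d$ stratum. Their contributions therefore split as a direct sum, and the conjugation by $c=t(r_G,r_H)$ transports one to the other. The final assertion, triviality of $U_e^G$ on $W_{e,s}$ for $e>0$, is Theorem~\ref{triviality}.

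The main obstacle is the justification that the Jacquet functor of a compactly induced representation from a non-normal stabilizer is again compactly induced with the stated coefficient. Concretely, the integral over $U_{e,0}\backslash U$ must converge and be equivariant, and the character $\psi_U$ restricted to $U_{e,0}$, after conjugation by $w_e$, must be exactly $\psi_e$ on $V_{ni+m,m^{n-1}}$. We would handle this first by following the proof of Theorem~12.4 of \cite{GS22} verbatim, replacing the middle block $m$ by $ni+m$. We would then check the character identity $\psi_e(v)=\psi'_e(w_{\mathrm{GL}}^{-1}vw_{\mathrm{GL}})$ using (3.2)--(3.4) and (3.16)--(3.17) of \cite{GS21}.
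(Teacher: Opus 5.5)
Your proposal is correct and follows essentially the same route as the paper's proof: filtration of $\rho_s$ by support on the double cosets, with the central step ordered by increasing $e$ (the paper justifies openness by noting that $\dim(X\cap W)\ge e$ is a closed condition); the geometric lemma together with exactness of $J_{U,\psi_U}$; elimination of the terms with $r>0$ via Lemma~\ref{intersection} and Proposition~\ref{rzero}; identification of stabilizers and coefficients via Propositions~\ref{stab}, \ref{evenSO} and \ref{fibre}; and Theorem~\ref{triviality} for the last assertion. Two points of phrasing need adjusting: the submodules are the functions supported, compactly modulo $Q_N$, in the open unions, not restrictions to those unions; and for $r>0$ the representatives $\bar\eta$ range over products of Grassmannians, so the finiteness you assert is that of the central orbits indexed by $e$, while the $r>0$ strata are disposed of through the injective map of (12.41) of \cite{GS22}, as you propose.
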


\begin{proof}
We filter the induced representation by support on the successive double cosets. Functions supported on the open orbit form the first submodule. Let $X$ vary over the central maximal isotropic subspaces. For the remaining orbits, the condition $\dim(X\cap W)\ge e$ is closed. In coordinates, it is given by vanishing minors of the projection $X\to (W\oplus Y)/W$. Thus extending functions by zero gives the filtration in increasing order of $e$. The geometric lemma identifies its quotients with compact induction from the stabilizer of a representative; see \cite{BZ76,BZ77}. For $n\ge2$, Lemma~\ref{intersection} and Proposition~\ref{rzero} eliminate the terms with $r>0$. For $n=1$, the filtration starts with the orbits indexed by $e$. Propositions~\ref{stab}, \ref{evenSO}, and \ref{fibre} identify the stabilizers and their representations for these orbits. Exactness of twisted Jacquet functors gives the asserted filtration. The triviality of $U_e^G$ follows from Theorem~\ref{triviality}.
\end{proof}

At an endpoint for the split even orthogonal group, we write \(J_{d,+,s}\) and \(J_{d,-,s}\).

For $G=\SO_2$, the endpoint radical is trivial. Its Jacquet functor is therefore the identity. For $\SO_{2d}$ with $d\geq2$, the two endpoint parabolics are proper. We keep their Jacquet functors separate throughout.

\section{The orbit contributions}\label{terms}

\begin{lemma}\label{homlemma}
Let $L$ and $M$ be locally profinite groups. Let $V$ be a smooth $L\times M$-module. Let $\sigma$ be a smooth $L$-module and $\Pi$ a smooth $M$-module. Give $V\otimes\sigma$ the diagonal $L$-action. Then
\begin{equation}\label{quotient}
  \Hom_{L\times M}(V\otimes\sigma,\one_L\boxtimes\Pi) \simeq\Hom_M((V\otimes\sigma)_L,\Pi).
\end{equation}
If $\sigma$ is admissible, evaluation also gives
\begin{equation}\label{adjunction}
  \Hom_{L\times M}(V,\sigma^\vee\boxtimes\Pi) \simeq\Hom_M((V\otimes\sigma)_L,\Pi).
\end{equation}
Both isomorphisms are natural in $V$ and $\Pi$.
\end{lemma}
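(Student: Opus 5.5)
The first isomorphism (\ref{quotient}) is formal, and I will prove it first. An $L\times M$-map $V\otimes\sigma\to\one_L\boxtimes\Pi$ is an $M$-equivariant linear map $V\otimes\sigma\to\Pi$ that kills every vector $\ell x-x$ with $\ell\in L$, $x\in V\otimes\sigma$. The $M$-action on $V\otimes\sigma$ is through $V$ and commutes with the diagonal $L$-action. Hence the subspace spanned by the vectors $\ell x-x$ is $M$-stable, and $M$ acts on $(V\otimes\sigma)_L$. Such maps are therefore exactly the $M$-maps out of the coinvariants. Naturality in $V$ and $\Pi$ is clear, since both sides are built functorially from $V$ and $\Pi$.

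For (\ref{adjunction}), I will compare both sides with $\Hom_{L\times M}(V\otimes\sigma,\one_L\boxtimes\Pi)$, using the tensor--hom bijection given by evaluation. Given $\Phi\in\Hom_{L\times M}(V,\sigma^\vee\boxtimes\Pi)$, define
\[
\phi(v\otimes x):=(\mathrm{ev}_x\otimes\mathrm{id}_\Pi)\bigl(\Phi(v)\bigr),
\]
where $\mathrm{ev}_x:\sigma^\vee\to\bbC$ is evaluation at $x$. Since $\langle\ell\lambda,\ell x\rangle=\langle\lambda,x\rangle$, the map $\phi$ is $L$-invariant for the diagonal action, and it is $M$-equivariant. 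Conversely, given $\phi$, I will send $v$ to the element $\Phi(v)$ of $\Hom_{\bbC}(\sigma,\Pi)$ defined by $x\mapsto\phi(v\otimes x)$. The point to check is that this element lies in $\sigma^\vee\otimes\Pi\subset\Hom_{\bbC}(\sigma,\Pi)$. The two constructions are then mutually inverse, and naturality is clear.

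I expect the inclusion $\Phi(v)\in\sigma^\vee\otimes\Pi$ to be the main obstacle, and this is where admissibility enters. Fix $v$ and an open compact subgroup $K\subset L\times M$ fixing $v$, and set $K_L:=K\cap(L\times 1)$. Then $\Phi(v)$ is $K_L$-equivariant from $\sigma$ to $\Pi$, with $K_L$ acting trivially on $\Pi$. So $\Phi(v)$ factors through the projection $\sigma\to\sigma^{K_L}$ onto isotypic components, that is, it kills the complementary $K_L$-isotypic summands. Since $\sigma$ is admissible, $\sigma^{K_L}$ is finite dimensional. A linear map from $\sigma^{K_L}$ to $\Pi$ is a finite sum $\sum\lambda_j\otimes p_j$ with $\lambda_j\in(\sigma^{K_L})^*$. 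Each $\lambda_j$, extended by zero on the other isotypic components, is a smooth functional, namely an element of $(\sigma^\vee)^{K_L}$. Hence $\Phi(v)\in\sigma^\vee\otimes\Pi$.

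Equivariance of $\Phi$ under $L\times M$ follows from the invariance of $\phi$, and smoothness is automatic. Combining the bijection just constructed with (\ref{quotient}) gives (\ref{adjunction}).
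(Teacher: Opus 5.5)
Your proposal is correct and follows essentially the same route as the paper. The first isomorphism is the universal property of $L$-coinvariants. For the second, you, like the paper, observe that $x\mapsto\phi(v\otimes x)$ is invariant under an open compact subgroup $K$ of $L$ fixing $v$, so by admissibility it factors through the finite-dimensional space $\sigma^{K}$ and therefore lies in $(\sigma^\vee)^{K}\otimes\Pi$.
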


\begin{proof}
An $L$-invariant linear map on $V\otimes\sigma$ factors uniquely through $(V\otimes\sigma)_L$. This proves (\ref{quotient}). Evaluation $\sigma^\vee\otimes\sigma\to\bbC$ defines the forward map in (\ref{adjunction}).

Conversely, let $T:(V\otimes\sigma)_L\to\Pi$ be $M$-equivariant. For $v\in V$, define $T_v(u):=T([v\otimes u])$ for $u\in\sigma$. Choose a compact open subgroup $K\subset L$ fixing $v$. The coinvariant relations give $T_v(ku)=T_v(u)$ for $k\in K$. Let $e_K$ be averaging over $K$ with total mass one. Then $T_v=T_v\circ e_K$. Since $\sigma$ is admissible, $\sigma^K$ is finite dimensional. Thus $T_v$ belongs to $(\sigma^\vee)^K\otimes\Pi$. The map $v\mapsto T_v$ therefore takes values in $\sigma^\vee\boxtimes\Pi$. The coinvariant relations give $L$-equivariance. Since $T$ is $M$-equivariant, so is the map constructed above. These two constructions are inverse and commute with homomorphisms of $V$ and $\Pi$.
\end{proof}

For admissible $\sigma$, Lemma~\ref{homlemma} gives
\begin{equation}\label{hom2}
  \Hom_{G\times H}(J_s,\sigma^\vee\boxtimes\Pi) \simeq\Hom_H(\mathbf I_s(\tau,\sigma),\Pi)
\end{equation}
for every smooth $H$-module $\Pi$. This isomorphism is natural in $\Pi$. The representation identity in Theorem~\ref{main} therefore determines these $\Hom$ spaces.

We use the stabilizers and actions defined in Section~\ref{orbits}. The modulus formula (\ref{deltaQ}) gives $\delta_{P_e}(a)=|\det a|^{m-e-\delta_{\calH}}$ and $\delta_{R_e}(A)=|\det A|^{ni+m-e-\delta_{\calH}}$. Let $\delta_{S_e}$ denote the modulus character of $S_e$, with the determinant convention used for parabolic subgroups. The two unipotent radicals in (\ref{Se}) give $\delta_{S_e}(g,h)=\delta_{P_e}(g)\delta_{R_e}(h)$ for $(g,h)\in S_e$. We define the normalized coefficient by $\calE_{e,s}:=\delta_{S_e}^{-1/2}W_{e,s}$. The exponents $\alpha_e,\beta_e$ were defined in (\ref{ab}). Substitution in (\ref{rhoe}) gives
\[
  \begin{aligned}
    \calE_{e,s}(a,A,b) &={}|\det a|^{s+\alpha_e}|\det A|^{s+\beta_e}\\
    &\qquad\cdot \bigl[J_{V_{ni+m,m^{n-1}},\psi_e}(\Delta(\tau,m+i))\bigr] \bigl(\ell_e(a,A,b)\bigr).
  \end{aligned}
\]
The action of $zI_e\in\GL_e$ on $\calE_{e,s}$ differs from its action on $\calE_{e,0}$ by the factor $|z|^{es}$.

\subsection{The open orbit}

On the open orbit, the matrix $a_e$ in (\ref{Ce}) is $a_0=I_m$. For $n\ge2$, we have $D_1=D_Q$ when $e=0$. Hence $D_1$ acts trivially on $J_{V_{ni+m,m^{n-1}},\psi_0}(\Delta(\tau,m+i))$ by the argument following (\ref{Ae}). For $n=1$, the representation $\Delta(\tau,m+i)$ is a determinant character. It is therefore trivial on $D_1$.

For $a\ge0$, let $P_{a,nm}\subset\GL_{a+nm}$ be the standard parabolic with Levi subgroup $\GL_a\times\GL_{nm}$. Let $V_{a,m^n}$ be the unipotent radical for the composition $(a,m,\ldots,m)$. For $v\in V_{a,m^n}$, let $v_{j,j+1}$ be the block between the $j$th and $(j+1)$st blocks of size $m$. We use the character $\psi_{m,n}(v):=\psi^{-1}\!\left(\sum_{j=1}^{n-1}\tr v_{j,j+1}\right)$. It is trivial on $U_{P_{a,nm}}$. For $a=0$, we omit the first block and set $\GL_a:=\{1\}$. In particular, $P_{ni,nm}$ is a parabolic subgroup of $\GL_N$.

We write $D^{(r)}$ for the normalized Bernstein--Zelevinsky derivative of order $r$. For a representation of $\GL_b(F)$ and $1\le r\le b$, the derivative is a representation of $\GL_{b-r}(F)$. We use the character $u\mapsto\psi^{-1}(\sum_{j=1}^{r-1}u_{j,j+1})$ on the upper unitriangular group of the removed block of size $r$. We set $D^{(0)}=\operatorname{Id}$ and $D^{(r)}(\pi)=0$ for $r>b$. On the remaining general linear factor, the underlying unnormalized quotient is $D^{(r)}(\pi)|\det|^{r/2}$. The diagonal $\GL_m$ below is embedded by $g\mapsto\diag(I_a,g^{\Delta_n})$.

\begin{lemma}\label{derivative}
Let $\pi$ be a representation of $\GL_{a+nm}(F)$ of finite length. Assume that its wave-front orbits have first part at most $n$. There is an isomorphism of $\GL_a$-modules
\[
J_{V_{a,m^n},\psi_{m,n}}(\pi) \simeq \bigl((D^{(n)})^m\pi\bigr)|\det|^{mn/2}.
\]
The diagonal $\SL_m(F)$ acts trivially on this quotient.
\end{lemma}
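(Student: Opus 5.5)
We compare two models of the same rectangular orbit: the degenerate model $(V_{a,m^n},\psi_{m,n})$ and a neutral model built from iterated derivatives. The bound on the wave-front orbits will absorb the error terms. Concretely, we conjugate the diagonal blocks of size $m$ by the permutation $w$ that regroups coordinates. Index the coordinates as $(k,j)$, with $k$ the position inside a block of size $m$ and $j$ the block number. Then $w$ reorders them so that the $n$ coordinates with the same $k$ become consecutive. After this conjugation, $\psi_{m,n}$ becomes the restriction of a character of $V_{a,n^m}$: on each new block of size $n$ it is the standard generic character of the upper unitriangular group $N_n$, and it is trivial on blocks between different new blocks. However, $wV_{a,m^n}w^{-1}$ is not all of $V_{a,n^m}$. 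It misses some root groups and contains some lower-triangular ones in the same Levi.

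\textbf{Step 1: root exchange.} We use the root exchange of \cite{GRS11}, as in the proof of Lemma~\ref{D2D3}, to pass from $wV_{a,m^n}w^{-1}$ with its character to the group $V'$. This group is the unipotent radical of $P_{a,nm}$ times the standard unipotent group of the composition $(n,\ldots,n)$ ($m$ blocks), with characters. The missing root groups are added one pair at a time. Each exchange requires that the adjoined root group act trivially on the intermediate Jacquet module. We prove this by the Fourier argument stated before Lemma~\ref{D2D3}. Every nontrivial character of the adjoined group produces a nilpotent element $N$ with $N^n\neq0$, exactly as in (\ref{Ae}) and (\ref{cube}). The criterion following (\ref{WFn}), applied with the hypothesis on $\pi$ in place of (\ref{WFn}), then makes the corresponding twisted Jacquet module vanish. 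At each step we check the grading hypothesis of \cite{GGS17}, namely $\mathfrak c=\mathfrak g^S_{>1}$ with $\mathfrak g^S_1=0$. We take $S$ as in (\ref{S2}): a diagonal element with weights $2(n-j)+1$ on the $j$th coordinate of each new block. We expect this verification, together with keeping track of which commutators land in the common subgroup, to be the main obstacle.

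\textbf{Step 2: identify the result as iterated derivatives.} After Step 1 the unipotent group is the unipotent radical of the composition $(a,n,\ldots,n)$ enlarged by the $N_n$ factors. Its character is generic on each $N_n$ and trivial between different $n$-blocks. Its Jacquet module is obtained by peeling off one block of size $n$ at a time. For each block we first take the Jacquet module along the mirabolic-type unipotent group of the remaining last $n$ coordinates, with generic character on $N_n$. By Bernstein–Zelevinsky \cite{BZ77}, this is the unnormalized $n$-th derivative, which equals $D^{(n)}|\det|^{n/2}$. Iterating $m$ times gives $(D^{(n)})^m\pi$ on $\GL_a$, twisted by a character.

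To justify dropping the remaining root groups between different $n$-blocks, we argue as follows. The bound on the wave-front orbits forces all higher derivatives $D^{(r)}$ with $r>n$ to vanish at each stage. Hence the Bernstein–Zelevinsky filtration of the restriction to the mirabolic subgroup has only the pieces of order at most $n$. The pieces of order less than $n$ are killed by the generic character on $N_n$. Consequently the trivial character on the intermediate blocks yields the same quotient as integrating over the full block. Tracking the $|\det|$ exponents through the $m$ steps gives the total twist $|\det|^{mn/2}$ on $\GL_a$.

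\textbf{Step 3: triviality of $\SL_m$.} After conjugation by $w$, the diagonal $\GL_m$ sits inside the stabilizer of the character on the Levi $\GL_n^m$. Its unipotent root subgroups become root groups of $V_{a,n^m}$ between different $n$-blocks, on which the character is trivial. By Step 2 these act trivially on the quotient. The lower-triangular root subgroups act trivially by the symmetric argument, with the order of the blocks reversed. Since these unipotent subgroups generate $\SL_m(F)$, the group $\SL_m(F)$ acts trivially.
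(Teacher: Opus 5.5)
Your Step~2 agrees with the end of the paper's proof. Once the quotient is known to be $J_{C_a\rtimes N_L,\psi_{\rm deg}}(\pi)$, peeling off $m$ derivatives of order $n$ gives $((D^{(n)})^m\pi)|\det|^{mn/2}$.

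\textbf{The gap is Step~1.} That step carries the whole content of the lemma, you leave it open, and the verification you sketch cannot work as stated. (Also, $(V_{0,m^n},\psi_{m,n})$ is the \emph{neutral} model of $(n^m)$ and the derivative model is the degenerate one, not the reverse.)
\begin{itemize}
\item Your grading, with weight $2(n-j)+1$ on the $j$th coordinate of every new block, is the neutral grading. The entry in row $(k,j)$ and column $(k',j')$ has degree $2(j'-j)$, so $\mathfrak g^S_{>1}=\Lie(wV_{0,m^n}w^{-1})$, and the condition $\mathfrak c=\mathfrak g^S_{>1}$ holds only for the starting group.
\item The root groups still missing from $N_L$ are those with $k<k'$ and $j'\le j$, and they have $S$-degree at most $0$. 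After you adjoin one, the group is no longer $\mathfrak g^S_{>1}$. For a nontrivial character $t\phi$ on it, $f+t\phi$ is not homogeneous of degree $-2$, so the criterion following (\ref{WFn}) does not apply with this $S$.
\item It is also false that every nontrivial character of an adjoined group gives $N^n\ne0$. Take $m=n=2$ and number the reordered coordinates $1,\dots,4$. Then $f=E_{21}+E_{43}$, and $\Lie(wV_{0,2^2}w^{-1})$ is spanned by $E_{12},E_{14},E_{32},E_{34}$. On the span of $E_{13},E_{24}$, the character dual to $E_{31}-E_{42}$ gives $N=f+E_{31}-E_{42}$ with $N^2=0$.
\item In that example, every direction in this span normalizes the group, but only $E_{13}+E_{24}\in\mathfrak g^f$ fixes the character. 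One must adjoin that direction and exchange a complementary one against $E_{32}$.
\end{itemize}
In general you would need the interpolating gradings $h+tZ$ of \cite{GGS17}. At each critical value of $t$ you would adjoin the radical $\mathfrak g^f\cap\mathfrak g^{S_t}_1$, which is spanned by Toeplitz-type combinations of root vectors rather than root groups. Each such step needs its own vanishing argument with its own grading. None of this is in the proposal.

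\textbf{How the paper avoids this.} Theorem~A of \cite{GGS17} gives, with no vanishing input, a natural surjection $J_{V,\psi_{m,n}}(T)\twoheadrightarrow J_{N_L,\psi_{\rm deg}}(T)$ from the neutral to the degenerate model. The paper upgrades it to an isomorphism by counting dimensions on each irreducible constituent $\kappa\boxtimes\rho$ of $T_a=J_{C_a,1}(\pi)$. Both sides vanish unless the maximal wavefront orbit of $\rho$ is $(n^m)$, and then both are one dimensional by \cite{MW87}. Exactness of the two functors finishes the argument. This needs each $\rho$ to inherit the wavefront bound from $\pi$, which the paper proves with the grading $\diag(cI_a,h_\lambda)$; your proposal has no analogue of this step.

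\textbf{Steps~2 and~3.} The same one-dimensionality gives triviality of $\SL_m(F)$ by perfectness. Your Step~3 instead needs the Step~1 isomorphism to be equivariant for the root subgroups of the diagonal $\SL_m(F)$, which you do not check. Your paragraph in Step~2 about dropping root groups between blocks is not an argument. That $\Psi^-(\Phi^-)^{n-1}$ sees only the order-$n$ piece of the Bernstein--Zelevinsky filtration is true for every representation. It says nothing about whether additional root groups act trivially on a partial quotient.
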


\begin{proof}
Let $L:=\GL_{nm}(F)$ and $C_a:=U_{P_{a,nm}}$. Let $T_a:=J_{C_a,1}(\pi)$. Ordinary Jacquet modules preserve finite length, so $T_a$ has finite length as a $\GL_a\times L$-module. For $a=0$, we set $C_a:=1$ and $T_a:=\pi$.

Let $\kappa\boxtimes\rho$ be an irreducible constituent of $T_a$. We claim that every orbit in $\WF(\rho)$ has first part at most $n$. Suppose otherwise. Choose a maximal orbit $\mathcal O_\lambda$ of $\rho$ with $\lambda_1>n$. Choose $f_\lambda$ in lower triangular Jordan form with partition $\lambda$. On each Jordan block of size $b$, define $h_\lambda$ by $\diag(b-1,b-3,\ldots,1-b)$. Then $[h_\lambda,f_\lambda]=-2f_\lambda$. Let $U_\lambda$ and $\chi_\lambda$ be a unipotent subgroup and character defining the generalized Whittaker module for this orbit. We use the character convention $\chi_\lambda(\exp X):=\psi^{-1}(\tr(f_\lambda X))$ for $X\in\Lie U_\lambda$.

Choose an integer $c$ such that $c-\mu>1$ for every eigenvalue $\mu$ of $h_\lambda$ on $F^{nm}$. Let $h:=\diag(cI_a,h_\lambda)$ and $f:=\diag(0_a,f_\lambda)$. By the choice of $c$, all weights of $h$ on $\Lie C_a$ are greater than one. The trace character defined by $f$ is trivial on $C_a$. Embed $U_\lambda$ in the last $nm$ coordinates. The quotient defined by $h$ and $f$ is $J_{C_a\rtimes U_\lambda,\,1\boxtimes\chi_\lambda}(\pi) \simeq J_{U_\lambda,\chi_\lambda}(T_a)$. This isomorphism follows from transitivity of Jacquet modules. The corresponding twisted Jacquet module of $\rho$ is nonzero because $\mathcal O_\lambda$ is a maximal wavefront orbit. By exactness, this quotient is nonzero. The matrix $f$ has Jordan partition $(\lambda,1^a)$, whose first part is greater than $n$. The nonvanishing criterion following (\ref{WFn}) contradicts the assumed wavefront bound on $\pi$.

Let $V:=V_{0,m^n}\subset L$ and $\lambda_0:=(n^m)$. Let $N_L$ be the upper unitriangular group of $L$. We define a character of this group by $\psi_{\rm deg}(u):= \psi^{-1}\!\left(\sum_{k=0}^{m-1}\sum_{j=1}^{n-1} u_{kn+j,kn+j+1}\right)$. It is nondegenerate in each consecutive block of size $n$. It is trivial between those blocks. The permutation $(r-1)m+s\longmapsto(s-1)n+r$, with $1\le r\le n$ and $1\le s\le m$, places each of the $m$ chains of length $n$ in consecutive positions. This permutation identifies the nilpotent defining $(V,\psi_{m,n})$ with the one defining $(N_L,\psi_{\rm deg})$. The character $(V,\psi_{m,n})$ defines the neutral model for $\lambda_0$. The character $\psi_{\rm deg}$ defines a degenerate model for the same orbit. By Theorem~A of \cite{GGS17}, for every smooth $L$-module $T$ there is a natural surjection
\begin{equation}
J_{V,\psi_{m,n}}(T)\twoheadrightarrow J_{N_L,\psi_{\rm deg}}(T). \label{model}
\end{equation}
Naturality implies that this map commutes with $\GL_a$ when $T$ is a $\GL_a\times L$-module. Both Jacquet functors are exact.

The partition $\lambda_0$ dominates every partition of $nm$ with first part at most $n$. For an irreducible $\rho$ satisfying this bound, a nonzero $J_{V,\psi_{m,n}}(\rho)$ forces $\lambda_0$ to lie below its maximal wavefront partition. That partition already lies below $\lambda_0$, so the two partitions must agree. If that maximal partition differs from $\lambda_0$, the surjection (\ref{model}) also gives $J_{N_L,\psi_{\rm deg}}(\rho)=0$.

Suppose instead that the maximal partition is $\lambda_0$. For a maximal orbit, the dimension of either model equals the coefficient of that orbit in the character expansion. The dimension formula for both gradings is stated in Theorem~4.3.1(ii) of \cite{GGS17}. For an irreducible representation of $L$, this coefficient is one by Section~II.2 of \cite{MW87}. Hence $\dim J_{V,\psi_{m,n}}(\rho)=\dim J_{N_L,\psi_{\rm deg}}(\rho)=1$. The surjection (\ref{model}) is therefore an isomorphism.

We apply the two exact Jacquet functors to a composition series of $T_a$. The map (\ref{model}) is an isomorphism on every constituent $\kappa\boxtimes\rho$. Exactness therefore gives an isomorphism on $T_a$. The resulting map is $\GL_a$-equivariant. Jacquet transitivity identifies the left side with $J_{V_{a,m^n},\psi_{m,n}}(\pi)$. The right side is the quotient by $C_a\rtimes N_L$. Its character is trivial on $C_a$ and equals $\psi_{\rm deg}$ on $N_L$.

Taking one derivative of order $n$ removes the last block of size $n$ with its nondegenerate character. Repeating this $m$ times imposes precisely the character $\psi_{\rm deg}$ on $N_L$ and the trivial character on $C_a$. Each normalized derivative contributes $|\det|^{n/2}$ when expressed as an unnormalized quotient on the remaining $\GL_a$. Thus the quotient is $\bigl((D^{(n)})^m\pi\bigr)|\det|^{mn/2}$.

The diagonal $\SL_m(F)\subset L$ preserves the filtration induced by the composition series of $T_a$. Each nonzero successive quotient is $\kappa\otimes J_{V,\psi_{m,n}}(\rho)$. The second factor has dimension one. Assume $m\ge2$. The group $\SL_m(F)$ is perfect. It therefore acts trivially on each factor of dimension one. Relative to this filtration, its image consists of upper triangular block operators with identity diagonal blocks. It is therefore a nilpotent group. It is also perfect, as a quotient of $\SL_m(F)$. A perfect nilpotent group is trivial. For $m=1$, the group $\SL_m(F)$ is trivial.
\end{proof}

\begin{proposition}\label{openSpeh}
There is an isomorphism of $\GL_{ni}\times G$-modules
\[
  \delta_{P_{ni,nm}}^{-1/2} J_{V_{ni,m^n},\psi_{m,n}}(\Delta(\tau,m+i)) \simeq\Delta(\tau,i)|\det|^{-m/2}\boxtimes\one_G.
\]
\end{proposition}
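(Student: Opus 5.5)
Apply Lemma~\ref{derivative} with $\pi=\Delta(\tau,m+i)$ and $a=ni$. By (\ref{WFn}), every wavefront orbit of $\Delta(\tau,m+i)$ has first part at most $n$, so the lemma's hypothesis holds. Since $ni+nm=N$, the lemma gives a $\GL_{ni}$-isomorphism
\[
J_{V_{ni,m^n},\psi_{m,n}}(\Delta(\tau,m+i))\simeq\bigl((D^{(n)})^m\Delta(\tau,m+i)\bigr)|\det|^{mn/2}.
\]
It also shows that the diagonal $\SL_m(F)$ acts trivially. The proof then has three steps: compute the iterated derivative, account for the modulus character, and identify the $G$-action.

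For the first step, I will use the known behaviour of the highest derivative of Speh representations. For $\tau$ irreducible unitary generic, the $n$-th derivative of $\Delta(\tau,c)$ is $\Delta(\tau,c-1)|\det|^{1/2}$ in this normalization (up to the sign convention of the twist). This is the $(n,c)$-property of \cite[Theorem~4]{CFGK23}, combined with the computation of derivatives of ladder representations. I would cite this from \cite{CFGK23} or from Lapid--M\'inguez. Iterating $m$ times gives $\Delta(\tau,i)|\det|^{m/2}$, and for $i=0$ it gives $\one$, since the last derivative of $\tau$ itself is one-dimensional. I expect fixing the exponent of this twist to be the main obstacle. I would check it against central characters. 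The centre of $\GL_N$ acts on $\Delta(\tau,m+i)$ by $\omega_\tau^{m+i}$, because the Speh exponents are symmetric. Restricting the derivative computation to the centre of $\GL_{ni}$ then pins down the exponent. An alternative check is the Jacquet-module description of $\Delta(\tau,c)$ as a subrepresentation of $\Delta(\tau,c-1)|\det|^{-1/2}\times\tau|\det|^{(c-1)/2}$.

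For the second step, $\delta_{P_{ni,nm}}(\diag(A,B))=|\det A|^{nm}|\det B|^{-ni}$. On $\GL_{ni}$ it contributes $|\det|^{-nm/2}$. Combining this with the factor $|\det|^{mn/2}$ from the lemma and the twist $|\det|^{m/2}$ from the first step should give the net exponent $-m/2$ stated in the proposition. I will verify the signs with the central character check above.

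For the third step, $G$ embeds through $g\mapsto\diag(I_{ni},g^{\Delta_n})$, which lies in $\SL$ of the $nm$ block. Indeed, $\det g=1$ for $g$ in $\Sp$ or $\SO$, so $G\subset\SL_m(F)$. By the last assertion of Lemma~\ref{derivative}, $G$ acts trivially. The modulus $\delta_{P_{ni,nm}}$ is also trivial on this copy of $G$. Hence the isomorphism is one of $\GL_{ni}\times G$-modules, with $G$ acting by $\one_G$.
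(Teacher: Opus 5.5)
\textbf{Overall.} Your route is the paper's: Lemma~\ref{derivative} with $a=ni$, the iterated highest derivative of the Speh representation, the factor $\delta_{P_{ni,nm}}^{-1/2}=|\det|^{-nm/2}$ on $\GL_{ni}$, and triviality of the diagonal $\SL_m(F)\supset G$. Steps~2 and~3 are fine. The gap is the exponent in Step~1, which is the only content of the proposition beyond Lemma~\ref{derivative}.

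\textbf{The sign in Step~1 is wrong.} In the normalization fixed before Lemma~\ref{derivative} (the unnormalized quotient is $D^{(r)}(\pi)|\det|^{r/2}$), the correct formula is $D^{(n)}\Delta(\tau,c)\simeq\Delta(\tau,c-1)|\det|^{-1/2}$, not $|\det|^{+1/2}$.
\begin{itemize}
\item For $n=1$: the unnormalized first derivative of $\chi\circ\det$ on $\GL_c$ is $\chi\circ\det$ on $\GL_{c-1}$. The normalized one is therefore $\chi\circ\det\,|\det|^{-1/2}$.
\item For supercuspidal $\tau$: $\Delta(\tau,c)$ is the Zelevinsky segment representation on $[\tau|\cdot|^{(1-c)/2},\tau|\cdot|^{(c-1)/2}]$. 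Its highest derivative shortens the segment at the upper end, which leaves exponents centred at $-1/2$.
\end{itemize}
With your sign, the three factors multiply to $|\det|^{m/2}\cdot|\det|^{mn/2}\cdot|\det|^{-nm/2}=|\det|^{+m/2}$. So your sentence asserting a net exponent of $-m/2$ is an arithmetic slip, and the argument as written proves the wrong twist.

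\textbf{Your checks.} The central-character check you lean on would not catch this. The element $\diag(zI_{ni},I_{nm})$ is not central in $\GL_N$. Knowing that $\Delta(\tau,m+i)$ has central character $\omega_\tau^{m+i}$ gives only one relation between the twist on $\GL_{ni}$ and the action of $\diag(I_{ni},zI_{nm})$ on the rectangular quotient, and that action is equally unknown.

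Your alternative check is the decisive one, and it contradicts your Step~1. Take the embedding $\Delta(\tau,c)\hookrightarrow\Delta(\tau,c-1)|\det|^{-1/2}\times\tau|\det|^{(c-1)/2}$. The Leibniz-rule piece obtained by putting the whole $n$-th derivative on the factor $\tau|\det|^{(c-1)/2}$ is $\Delta(\tau,c-1)|\det|^{-1/2}$, and this is the answer.

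\textbf{Corrected computation.} With the correct sign:
\begin{itemize}
\item $(D^{(n)})^m\Delta(\tau,m+i)\simeq\Delta(\tau,i)|\det|^{-m/2}$;
\item Lemma~\ref{derivative} then gives the unnormalized quotient $\Delta(\tau,i)|\det|^{m(n-1)/2}$;
\item multiplying by $|\det|^{-nm/2}$ gives $-m/2$.
\end{itemize}
This is exactly the paper's computation.
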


\begin{proof}
For $c\ge1$, the normalized derivatives satisfy $D^{(n)}\Delta(\tau,c) \simeq\Delta(\tau,c-1)|\det|^{-1/2}$ and $D^{(r)}\Delta(\tau,c)=0$ for $r>n$, by the proof of Theorem~4 of \cite{CFGK23} and the formula for derivatives of induced representations in Lemma~4.5 of \cite{BZ77}. By the wave-front bound (\ref{WFn}), Lemma~\ref{derivative} applies to $\Delta(\tau,m+i)$. Iterating the derivative formula $m$ times gives $(D^{(n)})^m\Delta(\tau,m+i) \simeq\Delta(\tau,i)|\det|^{-m/2}$. Lemma~\ref{derivative} therefore identifies the unnormalized quotient, as a $\GL_{ni}$-module, with $\Delta(\tau,i)|\det|^{m(n-1)/2}$. On the $\GL_{ni}$ factor, $\delta_{P_{ni,nm}}^{-1/2}=|\det|^{-nm/2}$. The total exponent is $m(n-1)/2-nm/2=-m/2$.

The diagonal $\SL_m(F)$ acts trivially on the unnormalized quotient by Lemma~\ref{derivative}. The character $\delta_{P_{ni,nm}}^{-1/2}$ is trivial on this subgroup. Since $G\subset\SL_m(F)$, the normalized quotient also has trivial $G$-action.
\end{proof}

For $e=0$, Proposition~\ref{stab} gives the stabilizer $S_0=\{(b^\iota,m_{Q_{ni}}(a,b)u): a\in\GL_{ni},\ b\in G,\ u\in U_{Q_{ni}}\}$. We define the representation $\eta_s$ of $S_0$ by $\eta_s(b^\iota,m_{Q_{ni}}(a,b)u) :=\delta_{Q_{ni}}^{1/2}(a)|\det a|^s\Delta(\tau,i)(a)$. Propositions~\ref{fibre} and~\ref{openSpeh} give $J_{0,s}\simeq\ind_{S_0}^{G\times H}\eta_s$.

\begin{proposition}\label{integral}
For every smooth $G$-module $\sigma$, there is an $H$-isomorphism
\[
  (J_{0,s}\otimes\sigma)_G\simeq \Ind_{Q_{ni}}^H\bigl(\Delta(\tau,i)|\det|^s \boxtimes\sigma^\iota\bigr).
\]
\end{proposition}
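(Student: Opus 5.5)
We compute $(\ind_{S_0}^{G\times H}\eta_s\otimes\sigma)_G$ in two steps: Mackey/Frobenius for compact induction in the $G$-variable, then transfer of the remaining induction to $H$. The stabilizer $S_0$ is the graph-type subgroup $\{(b^\iota,m_{Q_{ni}}(a,b)u)\}$. Its projection to $H$ is $Q_{ni}$, and this projection is an isomorphism $S_0\to Q_{ni}$, since $b$ is determined by the $H$-component. Hence $G\times H=(G\times 1)\cdot S_0\cdot(1\times H)$, and the double coset space is a point. We would first rewrite
\[
\ind_{S_0}^{G\times H}\eta_s\simeq \ind_{Q_{ni}}^{H}\bigl(\ind_{1}^{G}\text{-type fibre}\bigr).
\]
Concretely, restricting a function $f$ on $G\times H$ to $\{(g,h)\}$ and using left $S_0$-equivariance identifies $\ind_{S_0}^{G\times H}\eta_s$ with the space of smooth functions $\phi:H\to C_c^\infty(G)\otimes\Delta(\tau,i)$ satisfying
\[
\phi(qh)(g)=\eta_s(q)\,\phi(h)\bigl((b_q^{\iota})^{-1}g\bigr),
\]
compactly supported modulo $Q_{ni}$. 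Since $Q_{ni}\backslash H$ is compact, this compactness condition is automatic. Equivalently, this space is $\ind_{Q_{ni}}^H\bigl(\Delta(\tau,i)\delta_{Q_{ni}}^{1/2}|\det|^s\boxtimes C_c^\infty(G)\bigr)$, where $Q_{ni}$ acts on $C_c^\infty(G)$ by left translation through $b\mapsto b^\iota$, and $G$ acts by right translation.

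Next we take $G$-coinvariants after tensoring with $\sigma$. The basic fact is $(C_c^\infty(G)\otimes\sigma)_G\simeq\sigma$, via $f\otimes v\mapsto\int_G f(g)\sigma(g)v\,dg$; this is the integral (\ref{int1}). It intertwines left translation on $C_c^\infty(G)$ with the action of $\sigma$, so the residual action of $b\in G$ becomes $\sigma(b^\iota)=\sigma^\iota(b)$. We would define the map
\[
\Phi\otimes v\longmapsto\Bigl(h\mapsto\int_G\Phi(h)(g)\,\sigma(g)v\,dg\Bigr)
\]
from $\ind_{S_0}^{G\times H}\eta_s\otimes\sigma$ to $\Ind_{Q_{ni}}^H(\Delta(\tau,i)|\det|^s\boxtimes\sigma^\iota)$. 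The steps are then routine. First, check that the image satisfies the correct $Q_{ni}$-equivariance; the factor $\delta_{Q_{ni}}^{1/2}$ in $\eta_s$ gives normalized induction. Second, check that $\Phi$ is $G$-invariant for the diagonal action, so it factors through the coinvariants. Third, check that it is $H$-equivariant.

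The main obstacle is proving that the induced map on coinvariants is an isomorphism, i.e.\ that $G$-coinvariants commute with the outer induction over the compact space $Q_{ni}\backslash H$. We would trivialize locally: choose a compact open $K\subset H$ and finitely many $h_j$ with $Q_{ni}\backslash H=\bigsqcup Q_{ni}\backslash Q_{ni}h_jK_j$, small enough that the inducing data restrict to trivial bundles. On each piece, the module is $C^\infty_c(\text{piece})\otimes\Delta(\tau,i)\otimes C_c^\infty(G)$, locally constant in $h$. Coinvariants commute with the tensor product by the flat vector space $C^\infty(\text{piece})$, because taking coinvariants is right exact and commutes with direct sums, and these pieces are direct sums of finite-dimensional locally constant functions. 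Hence the map is bijective on each piece. Surjectivity can also be seen directly by choosing $\Phi(h)=\phi_0(h)\cdot\mathbf 1_{K'}/\mathrm{vol}(K')$ with $K'$ fixing the relevant vectors. Injectivity reduces, piece by piece, to the exact isomorphism $(C_c^\infty(G)\otimes\sigma)_G\simeq\sigma$; note that $f\otimes v\mapsto f\otimes\sigma(g)v$ untwists the diagonal action to the action on the first factor only. Because the construction is natural in $\sigma$, no admissibility hypothesis is needed.
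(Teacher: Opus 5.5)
Your proposal is correct and follows the paper's proof. It uses the same integral map (\ref{int1}), the identification $(C_c^\infty(G)\otimes\sigma)_G\simeq\sigma$ from the tensor identity of \cite{HS17}, induction in stages through $Q_{ni}$, and compactness of $Q_{ni}\backslash H$ to commute $G$-coinvariants with the outer induction, a step you spell out in more detail than the paper.

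One small point in that step: the stabilizers $Q_{ni}\cap h_jK_jh_j^{-1}$ still act on $C_c^\infty(G)$ by left translation, so the pieces are not trivial bundles as written. To make them trivial, use opposite-unipotent coordinates (an Iwahori factorization of $K_j$); alternatively, note that invariants under a compact group are cut out by an idempotent commuting with $G$.
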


\begin{proof}
For $\Phi\in\ind_{S_0}^{G\times H}\eta_s$ and $v\in\sigma$, define
\begin{equation}
  \mathcal T(\Phi\otimes v)(h) :=\int_G \Phi(g,h)\otimes\sigma(g)v\,dg. \label{int1}
\end{equation}
For fixed $h$, compact support modulo $S_0$ gives compact support in the variable $g$. Right translation in $g$ shows that $\mathcal T$ factors through $G$-coinvariants. The substitution $g=b^\iota g'$ gives, for $a\in\GL_{ni}$, $b\in G$, and $u\in U_{Q_{ni}}$,
\[
  \mathcal T(\Phi\otimes v)(m_{Q_{ni}}(a,b)uh) =\delta_{Q_{ni}}^{1/2}(a)|\det a|^s \bigl(\Delta(\tau,i)(a)\otimes\sigma(b^\iota)\bigr) \mathcal T(\Phi\otimes v)(h).
\]
Thus the $G$ factor of the Levi acts by $\sigma^\iota$.

Let $C_c^\infty(G)$ be the space of compactly supported locally constant complex functions on $G$, with the right regular action. The tensor identity in Lemma~2.8 of \cite{HS17} and Lemma~4.1 of \cite{HS17} give an isomorphism $(C_c^\infty(G)\otimes\sigma)_G\xrightarrow{\ \sim\ }\sigma$ for the trivial subgroup of $G$. It is defined by $f\otimes v\longmapsto\int_G f(g)\sigma(g)v\,dg$. Induction in stages applies this map to the coefficient over $Q_{ni}$. Since $Q_{ni}\backslash H$ is compact, taking $G$-coinvariants commutes with this induction. The equivariance calculation above therefore shows that $\mathcal T$ is an $H$-isomorphism.
\end{proof}

\subsection{The other orbits}\label{coinvariants}

Fix $e>0$ and use the notation of Section~\ref{orbits}. For $n=1$, the Speh representation is a determinant character. Let $p:=e$, $q:=m-e$, $c:=m-2e$, $d_0:=ni+e$, and $R_n:=\frac{(n-1)(n-2)}2$. For $n\ge2$, the argument following (\ref{Ae}) shows that $D_Q$ acts trivially on $J_{V_{ni+m,m^{n-1}},\psi_e}(\Delta(\tau,m+i))$. We combine $W_0$ and $P_1$ into the first diagonal block:
\begin{equation}
L_0:=W_0\oplus P_1,\qquad \dim L_0=ni+e.                \label{L0}
\end{equation}
To compute the initial coefficient, order the summands as $L_0,Q_1,M_2,\ldots,M_n$, where $M_j=Q_j\oplus P_j$. We keep the same labels for the blocks under this permutation.

For $z\in\Mat_{d_0\times q}(F)$, let
\[
n_0(z):=\begin{pmatrix}
I_{d_0}&z&0\\
    0&I_q&0\\
    0&0&I_{(n-1)m}
  \end{pmatrix}.
\]
Let $D_0:=\{n_0(z):z\in\Mat_{d_0\times q}(F)\}$. The first two block rows and columns correspond to $L_0$ and $Q_1$. The chosen basis identifies $\GL(L_0)$ with $\GL_{d_0}(F)$.

\begin{lemma}\label{endpoint}
Assume $n\ge2$. The group $D_0$ acts trivially on $J_{V_{ni+m,m^{n-1}},\psi_e}(\Delta(\tau,m+i))$. On the quotient obtained by adjoining $D_0$, $\GL(L_0)$ acts through the matrices $\diag(\alpha,I_{N-d_0})$, with $\alpha\in\GL_{d_0}(F)$.
\end{lemma}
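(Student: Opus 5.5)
The proof has two parts: show that $D_0$ acts trivially, then use this to get the block form of the $\GL(L_0)$-action. We work in the reordered basis $L_0,Q_1,M_2,\ldots,M_n$.

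\emph{Decomposition of $D_0$.} Write $z=\binom{z_W}{z_P}$ according to $L_0=W_0\oplus P_1$. The $W_0$-rows give the matrices $n_0\bigl(\binom{z_W}{0}\bigr)$. In the original ordering these are exactly the elements $d_1((y_Q,0))$ of $D_Q$. By the argument following (\ref{Ae}), they already act trivially on $\mathcal C_e=J_{V_{ni+m,m^{n-1}},\psi_e}(\Delta(\tau,m+i))$. The $P_1$-rows give the block $(P_1,Q_1)$ inside $\GL(M_1)$, which is a unipotent subgroup $D_0'$ of the Levi factor. Since $a_e=\diag(I_q,0_p)$ annihilates $P_1$ from the left side of $x_1$, the conjugation by $D_0'$ preserves $\psi_e$. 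So $D_0'$ normalizes $V_{ni+m,m^{n-1}}$ and acts on $\mathcal C_e$. It remains to show that $D_0'$ acts trivially.

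\emph{Triviality of $D_0'$.} Use the Fourier argument recalled before Lemma~\ref{D2D3}. Fix a nonzero $B\in\Mat_{q\times p}(F)$ and consider the character $\psi^{-1}(\tr(Bz_P))$ on $D_0'$. The group generated by $V_{ni+m,m^{n-1}}$, $D_Q$ and $D_0'$ is a unipotent group $V_0$. Its Lie algebra is the sum of $\Lie V_{d_0,q,m^{n-1}}$ and some extra negative-weight root blocks. I will choose a rational grading $S_0$ adapted to the chain $L_0\leftarrow Q_1\leftarrow M_2\leftarrow\cdots\leftarrow M_n$. With it, the trace-dual matrix $N$ of the combined character has weight $-2$. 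I also want $\Lie V_0=\mathfrak g^{S_0}_{>1}$ with $\mathfrak g_1^{S_0}=0$, or, failing that, a maximal isotropic weight-one part as in (\ref{rad3}).

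The point of the computation is that $N$ contains the chain $a_e$ from $M_2$ to $Q_1$, then the identity blocks along $M_2,\ldots,M_n$, and finally the new block $B$ from $Q_1$ to $P_1\subset L_0$. So $N^n$ restricted from $M_n$ to $L_0$ is nonzero, and the first part of its Jordan partition is greater than $n$. The vanishing criterion following (\ref{WFn}) then gives a zero Jacquet module for every $B\ne0$. Hence $D_0'$ acts trivially on $\mathcal C_e$, and therefore so does $D_0$.

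If $V_0$ is not already of the form $\mathfrak g_{>1}^{S_0}$, I will first use root exchange, as in the proof of Lemma~\ref{D2D3}. The aim is to trade the $(P_1,Q_2)$ blocks against suitable lower blocks. I expect this verification of the grading hypotheses to be the main obstacle. The reason is that $P_1$ is the degenerate part of $a_e$, so the weights of $P_1$ and $Q_1$ inside $M_1$ must be separated, as with $S_2$ above.

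\emph{Block form of the $\GL(L_0)$-action.} On the quotient obtained by adjoining $D_0$, the group $D_0$ acts trivially. By the computation above this quotient equals $\mathcal C_e$, and the character on $D_0$ is trivial. The elements of the stabilizer image that act on $L_0$ are block upper triangular with respect to $L_0\oplus(Q_1\oplus M_2\oplus\cdots)$. Modulo $D_0$ and $V_{ni+m,m^{n-1}}$, such an element reduces to a block-diagonal element $\diag(\alpha,\ast)$. The complementary block must be the identity, for two reasons. The $\GL(Q_1)$- and $\GL(M_j)$-components of an element preserving $\psi_e$ are tied to each other through $a_e$ and the identity blocks, and the elements considered here act trivially on $Q_1\oplus M_2\oplus\cdots\oplus M_n$. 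This gives the asserted form $\diag(\alpha,I_{N-d_0})$.
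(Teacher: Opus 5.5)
\paragraph{First part: triviality of $D_0$.} Your mechanism is the paper's: a Fourier argument plus the wavefront vanishing criterion applied to a nilpotent with $N^n\neq0$. However, you misdescribe the enlarged group, and so the ``main obstacle'' you flag does not exist.

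In the order $L_0,Q_1,M_2,\ldots,M_n$, the group $V_{ni+m,m^{n-1}}$ already contains every block of $V_{d_0,q,m^{n-1}}$ except the $(L_0,Q_1)$-block. The group $D_0$ (your $D_Q$ together with $D_0'$) is exactly that block. Hence $D_0V_{ni+m,m^{n-1}}=V_{d_0,q,m^{n-1}}$ is a standard unipotent radical, with no extra negative-weight blocks; the $(P_1,Q_2)$-block you worry about lies inside the $(L_0,M_2)$-block. The combined character is supported on the adjacent blocks $(L_0,Q_1)$, $(Q_1,M_2)$ and $(M_j,M_{j+1})$. So the criterion applies directly, with labels decreasing by two and $\mathfrak g_1=0$, and no root exchange is needed.

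Splitting off $D_Q$ is also unnecessary. For every nonzero $C\in\Mat_{q\times d_0}(F)$ one has $N_C^n|_{L_0}=b_qC\neq0$, because $b_q=\binom{I_q}{0}$ is injective. This is how the paper handles all of $D_0$ at once.

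\paragraph{Second part: the $\GL(L_0)$-action.} Here there is a genuine gap. What must be shown is that the matrices $\diag(\alpha,I_{N-d_0})$ normalize the enlarged group and preserve its character, so that their action descends to the Jacquet module. This is the $\GL(L_0)$-action used later in Theorem~\ref{coarse} as the Levi action on the inner ordinary Jacquet module.

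You argue instead about stabilizer elements, and neither of your two reasons works. The claim that ``the elements considered here act trivially on $Q_1\oplus M_2\oplus\cdots\oplus M_n$'' is circular: it is the conclusion. The other reason is false: being tied through $a_e$ and the identity blocks does not force the complementary block to be the identity. For example, $j'(\alpha,\beta,\gamma)$ preserves the character for arbitrary $\beta,\gamma$. Likewise, the image of $zI_e\in\GL_e$ fixes $L_0$ but acts by $b_e(z)\neq I$ on each $Q_j$.

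The correct argument is one line. After adjoining $D_0$, the whole $L_0$-row lies in the group and no off-diagonal block has column in $L_0$. Conjugation by $\diag(\alpha,I_{N-d_0})$ therefore only replaces each $L_0$-row block $X$ by $\alpha X$, and the character is trivial on that row.
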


\begin{proof}
Let $\mathcal V_e$ be the conjugate of the group in (\ref{Ce}) under the permutation just chosen. The commutator of $D_0$ with a root subgroup of $\mathcal V_e$ is either trivial or contained in a root subgroup of $\mathcal V_e$ in the row corresponding to $L_0$. The character is trivial on the resulting root subgroups and all further commutators. With rows and columns divided according to $L_0,Q_1,M_2,\ldots,M_n$, the only block missing from $\mathcal V_e$ is the $(L_0,Q_1)$-block. Hence $D_0\mathcal V_e=V_{d_0,q,m^{n-1}}$.

Let $0\ne C\in\Mat_{q\times d_0}(F)$. We associate to $C$ the character of $D_0$ given by $n_0(z)\mapsto\psi^{-1}(\tr(Cz))$. Nondegeneracy of the trace pairing shows that these are all its nontrivial characters. The resulting character of $D_0\mathcal V_e$ is supported on adjacent blocks. Let $b_q:=\binom{I_q}{0_{p\times q}}$. The corresponding nilpotent matrix is
\[
N_C:=\begin{pmatrix}
    0_{d_0}&&&&\\
C&0_q&&&\\
    &b_q&0_m&&\\
    &&I_m&\ddots&\\
    &&&\ddots&0_m
  \end{pmatrix}.
\]
For $n=2$ the matrix has three diagonal blocks. There is then no $I_m$-block below $b_q$. Matrix multiplication gives $N_C^n\big|_{L_0}=b_qC\ne0$. The argument in (12.60)--(12.61), in the proof of Proposition~12.6 of \cite{GS22}, therefore gives the trivial action of $D_0$, using the vanishing criterion following (\ref{WFn}).

After adjoining $D_0$, every block in the row corresponding to $L_0$ is present. The character is trivial on this row. The matrices $\diag(\alpha,I_{N-d_0})$ therefore normalize the group and preserve its character. Their action descends to the Jacquet module.
\end{proof}

\begin{lemma}\label{radical}
For each orbit other than the open orbit, the unipotent radical of $R_e$ acts trivially on $W_{e,s}$. The radical therefore acts trivially on $\mathcal E_{e,s}$. The assertion holds for both end nodes of type $D$.
\end{lemma}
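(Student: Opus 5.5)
We have to show that $U_{R_e}$ acts trivially on $W_{e,s}$. Since $W_{e,s}$ is the Jacquet quotient $\mathcal C_e=J_{V_{ni+m,m^{n-1}},\psi_e}(\Delta(\tau,m+i))$ twisted by a determinant character, and unipotent elements have determinant one, it is enough to show that the image $\ell_e(1,u)$ of every $u\in U_{R_e}$ acts trivially on $\mathcal C_e$. The case $n=1$ is immediate: then $\Delta(\tau,m+i)\simeq\tau\circ\det$ and $\det\ell_e(1,u)=1$, exactly as in the proof of Theorem~\ref{triviality}. The assertion for $\mathcal E_{e,s}=\delta_{S_e}^{-1/2}W_{e,s}$ then follows, because $\delta_{S_e}$ is trivial on unipotents. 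So assume $n\ge2$. The first step is to compute $\ell_e(1,u)$ in the coordinates $L_0,Q_1,M_2,\ldots,M_n$ of Section~\ref{coinvariants}. The element $u$ fixes the source space pointwise, and $H$ embeds only into the central block $j(g,h)$. So its projection to the Siegel Levi is the identity on the $n-1$ outer blocks $M_2,\ldots,M_n$. On $L_0\oplus Q_1$, it should be a block upper triangular matrix of the form
\[
\begin{pmatrix} I_{d_0} & z \\ 0 & I_q\end{pmatrix},
\]
where $z$ comes from the $\GL_{ni+e}\to G_e$ and $\GL_{ni+e}\to\GL_{ni+e}^*$ parts of $U_{R_e}$, read through $w_e$.

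The second step is to check this shape from the explicit representative $w_e=\widehat w_{\mathrm{GL}}w_e^{(0)}t(1,\widehat\beta_e)$. The point is that the target $(ni+e)$-dimensional isotropic summand lies in $L_0$ once $W_0$ and $P_1$ are merged, as in (\ref{L0}). The residual graph part lies in $Q_1$, through the $C_1$ coordinates. The remaining target coordinates are killed modulo $Q_N$, or only affect the unipotent radical of $Q_N$, which is invisible after $\operatorname{pr}_N$. Consequently $\ell_e(1,U_{R_e})\subset D_0$. I would justify this by the same bookkeeping as in Proposition~\ref{stab}: $U_{R_e}$ preserves the flag $(\text{isotropic summand})\subset(\text{isotropic summand})\oplus(\text{graph})$ of the central maximal isotropic space $L$ and acts trivially on the graded pieces. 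Its image in $\GL(L)$ is therefore unipotent upper triangular with respect to $L_0\supset W_0$-part and $Q_1$-part. The diagonal $\GL_e$-type contributions of the source $E$ part are absent, since $u$ has trivial $G$-component.

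The third step is to apply Lemma~\ref{endpoint}, which says that $D_0$ acts trivially on $\mathcal C_e$; this gives the triviality for $U_{R_e}$. For the two signed endpoints at $e=d$ for $\SO_{2d}$, I would conjugate by $c=t(r_G,r_H)$ as in the discussion after Proposition~\ref{evenSO}. This conjugation preserves the projected Jacquet group and character, so the assertion transfers from one sign to the other. The main obstacle is the second step, namely checking that no component of $\ell_e(1,u)$ lands in the $(Q_1,L_0)$ direction or in the outer blocks $M_j$. If some component does land in the $(L_0,M_j)$ blocks, I would absorb it into $V_{ni+m,m^{n-1}}$. Such a component is harmless because $\psi_e$ is trivial on the $L_0$ row, and this is why the lemma is phrased in terms of the full row of $L_0$ after $D_0$ is adjoined.
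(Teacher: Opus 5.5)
Your proposal is correct and is essentially the paper's proof: the Siegel Levi image of $t(1,u)$ is the identity on $M_2,\ldots,M_n$, fixes $L_G\oplus L_0$ pointwise and moves the graph $L'$ into $L_0$, so it equals $n_0\bigl((0_{d_0\times e},z)\bigr)\in D_0$ and Lemma~\ref{endpoint} applies, while $n=1$ is handled by the determinant character. The only differences are cosmetic: at the type $D$ endpoints the paper just notes that $c=m-2e=0$ forces this image to be the identity (your transport through $t(r_G,r_H)$ also works), and only the block of $u$ sending the residual space into the $(ni+e)$-dimensional isotropic space contributes to $z$, since the corner block of $u$ moves vectors outside the central isotropic space.
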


\begin{proof}
Assume first that $n\ge2$. Let $L_G$ be the isotropic $e$-plane in the space of $G$ used in Proposition~\ref{stab}. The orbit coordinates identify the corresponding isotropic subspace in the space of $H$ with $L_0$ in (\ref{L0}). Let $L'$ be the subspace that projects isomorphically to both remaining nondegenerate spaces in Proposition~\ref{stab}. The central maximal isotropic space is $L_G\oplus L_0\oplus L'$.

Let $u\in U_{R_e}$. The element $u$ fixes $L_0$ pointwise. It acts trivially on $L_0^\perp/L_0$. The element $t(1,u)$ fixes $L_G$ pointwise. For $v\in L'$, one has $t(1,u)v-v\in L_0$. The element $t(1,u)$ also acts trivially on the $n-1$ blocks on which $G$ acts in (\ref{tgh}). We divide the columns of $Q_1$ according to $Q_1=E_1\oplus C_1$. After the change of basis by $w_{\mathrm{GL}}$ used in (\ref{Ce}), the image of $t(1,u)$ in the Siegel Levi has the form $n_0\!\left((0_{d_0\times e},z)\right)$, with $z\in\Mat_{d_0\times c}(F)$, and therefore lies in $D_0$. By Lemma~\ref{endpoint}, its action on $J_{V_{ni+m,m^{n-1}},\psi_e}(\Delta(\tau,m+i))$ is trivial.

For $n=1$ the inducing Speh representation is a determinant character. The image of $u$ is unipotent, so that determinant character is trivial on it. All modulus characters are trivial on $u$.

At either endpoint of type $D$, we have $c=0$. The matrix $n_0((0_{d_0\times e},z))$ is then the identity.
\end{proof}

Proposition~\ref{stab} identifies the kernel of $S_e\to R_e$ with $K:=P_e^0:=\GL_e\ltimes U_e^G$. For $b\in G_e$, we write $\ell(b):=m_{P_e}(I_e,b^{\iota_e})$. The Levi decomposition gives $P_e=K\rtimes\ell(G_e)$.

Let $\sigma$ be a smooth representation of $G$. The element $b\in G_e$ acts on $r_{U_e^G}(\sigma)$ through $\ell(b)$. On $\calE_{e,s}\otimes r_{U_e^G}(\sigma)$, the group $\GL_e$ acts diagonally, $\GL_{ni+e}$ acts on the first factor, and $G_e$ acts diagonally with the second action specified by $\ell$. Let $N_e:=(\calE_{e,s}\otimes r_{U_e^G}(\sigma))_{\GL_e}$. The group $\GL_{ni+e}\times G_e$ acts on this quotient. By Lemma~\ref{radical}, $U_{R_e}$ acts trivially on the coefficient. We regard $N_e$ as an $R_e$-module by inflation from its Levi subgroup.

\begin{theorem}\label{boundary}
For every smooth $G$-module $\sigma$ and every $e>0$, there is an $H$-isomorphism
\begin{equation}
  (J_{e,s}\otimes\sigma)_G\simeq \Ind_{R_e}^H \bigl(\calE_{e,s}\otimes r_{U_e^G}(\sigma)\bigr)_{\GL_e}. \label{Be}
\end{equation}
The factor $G_e$ acts diagonally on $\calE_{e,s}\otimes r_{U_e^G}(\sigma)$. At a split endpoint of type $D$, the formula holds for each sign, with the parabolics attached to that end node.
\end{theorem}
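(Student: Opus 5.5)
We compute $(\ind_{S_e}^{G\times H}W_{e,s}\otimes\sigma)_G$ in two stages: first take coinvariants of the compact induction along the fibration $G\times H\to H$, then identify the resulting coefficient. By Proposition~\ref{stab}, the projection of $S_e$ to $H$ is $R_e$, and the kernel of $S_e\to R_e$ is $K=P_e^0=\GL_e\ltimes U_e^G$. Since $S_e\backslash(G\times H)$ fibres over $R_e\backslash H$ with fibre $P_e^0\backslash G$ (up to the twist by $\iota_e$ on $G_e$), we use induction in stages. Write $\ind_{S_e}^{G\times H}W_{e,s}=\ind_{S_e\cdot(G\times 1)}^{G\times H}\ind_{S_e}^{S_e\cdot(G\times1)}W_{e,s}$. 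Note that $S_e\cdot(G\times1)=G\times R_e$, and $R_e\backslash H$ is compact, so the outer induction is $\ind=\Ind$ from $G\times R_e$. As in the proof of Proposition~\ref{integral}, $G$-coinvariants commute with this induction, since $G$ acts only on the inner factor. It therefore suffices to compute, as an $R_e$-module, the coinvariants $\bigl(\ind_{S_e}^{G\times R_e}W_{e,s}\otimes\sigma\bigr)_G$.

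Restricted to $G\times\{1\}$, the inner module is $\ind_{K}^{G}$ of $W_{e,s}|_K$, with the $R_e$-action carried along through the Levi coordinates $(A,b)$. For the $G$-coinvariants we use the standard identity
\[
  (\ind_K^G W\otimes\sigma)_G\simeq (W\otimes\delta_K^{-1}\Delta_G\otimes\sigma|_K)_K ,
\]
which is Frobenius reciprocity for coinvariants (Lemma~2.8 of \cite{HS17}, via the integral $\Phi\otimes v\mapsto\int_{K\backslash G}\ldots$ as in (\ref{int1})). The modular factor here is the ratio of modulus characters of $K$ and $G$ coming from unnormalized compact induction. We now take the $K$-coinvariants in two steps, first $U_e^G$ and then $\GL_e$. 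By Theorem~\ref{triviality}, $U_e^G$ acts trivially on $W_{e,s}$, so $(W_{e,s}\otimes\sigma)_{U_e^G}=W_{e,s}\otimes J_{U_e^G}(\sigma)$. It remains to take $\GL_e$-coinvariants and check the characters. Write $J_{U_e^G}(\sigma)=\delta_{P_e}^{1/2}r_{U_e^G}(\sigma)$ and $W_{e,s}=\delta_{S_e}^{1/2}\calE_{e,s}$ with $\delta_{S_e}=\delta_{P_e}\delta_{R_e}$. The $\delta_{P_e}^{1/2}(a)$ factors combine with the modulus factor of $K\backslash G$ (which equals $\delta_{P_e}^{-1}(a)$ on $\GL_e$) to give the trivial character of $\GL_e$. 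The leftover $\delta_{R_e}^{1/2}(A)$ is exactly the normalization converting inflation to normalized $\Ind_{R_e}^H$. Lemma~\ref{radical} ensures $U_{R_e}$ acts trivially on the coefficient, so $N_e$ is a legitimate $R_e$-module by inflation.

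Finally, we identify the residual $G_e$-action. An element $b$ of the $G_e$-factor of $S_e$ has $G$-component $\ell(b)=m_{P_e}(I_e,b^{\iota_e})$, which under the coinvariant identity acts on $r_{U_e^G}(\sigma)$ through $\ell$ and on $\calE_{e,s}$ via $\ell_e$. This gives the diagonal action stated. We expect the main obstacle to be this bookkeeping of modular characters, namely the exponent cancellations among $\delta_{P_e}$, the quotient measure on $K\backslash G$, and the character $\mathfrak j_e$ of (\ref{measure}), which is already absorbed into $W_{e,s}$. We would verify it on the centre $zI_e$ and on $\GL_{ni+e}$ using (\ref{deltaQ}). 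A second point to check is that $\ind$ commutes with coinvariants, which holds because coinvariants are right exact and compactly supported functions are handled by an $e_K$-averaging argument as in Lemma~\ref{homlemma}. For the type~$D$ endpoints, we run the same argument with $S_{d,\varepsilon}$. There $c=0$, $G_e$ is trivial, and $K=Q_d^{G,\varepsilon}$.
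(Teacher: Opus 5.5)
Your proposal is correct and follows essentially the same route as the paper. Both arguments use induction in stages through $G\times R_e$, with compactness of $R_e\backslash H$ letting $G$-coinvariants pass to the coefficient. Both then apply the tensor identity and Frobenius reciprocity for $\ind_K^G$ with relative modulus $\delta_{P_e}^{-1}$, the triviality of $U_e^G$ on $W_{e,s}$ from Theorem~\ref{triviality}, and the cancellation $\delta_{P_e}^{-1}\delta_{S_e}^{1/2}\delta_{P_e}^{1/2}=1$ on $K$, with the leftover $\delta_{R_e}^{1/2}$ normalizing $\Ind_{R_e}^H$. The only difference is presentation: the paper writes the identification as the explicit integral over $C_G\times G_e$ in (\ref{int2}). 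There the substitution $b=b_0b'$ shows precisely how left translation by $\ell(b_0)^{-1}$ becomes $\sigma(\ell(b_0))$, which gives the diagonal $G_e$-action you describe.
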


\begin{proof}
We identify the underlying vector spaces of $W_{e,s}$ and $\calE_{e,s}$. Let $\operatorname{pr}_e:W_{e,s}\otimes\sigma\to N_e$ be the composite of the quotient to $\sigma_{U_e^G}$ and the quotient by $\GL_e$. By Theorem~\ref{triviality}, $U_e^G$ acts trivially on $W_{e,s}$. The two normalizations are $W_{e,s}\simeq\delta_{S_e}^{1/2}\calE_{e,s}$ and $\sigma_{U_e^G}\simeq\delta_{P_e}^{1/2}r_{U_e^G}(\sigma)$. Since $\delta_{S_e}|_K=\delta_{P_e}|_K$, their product gives
\begin{equation}
  \operatorname{pr}_e\bigl(W_{e,s}(k)w\otimes\sigma(k)v\bigr) =\delta_{P_e}(k)\operatorname{pr}_e(w\otimes v),\qquad k\in K. \label{qaction}
\end{equation}

Choose a maximal compact subgroup $C_G$ such that $G=P_eC_G$. For $b\in G_e$, conjugation by $\ell(b)$ fixes $\GL_e$ pointwise and has modulus $\delta_{P_e}(\ell(b))=1$ on $U_e^G$. It therefore preserves Haar measure on $K$. We choose compatible Haar measures on $C_G$ and $G_e$. The quotient integral for $K\backslash G$ is then
\[
f\longmapsto\int_{C_G}\int_{G_e}f(\ell(b)c)\,db\,dc, \qquad f(kg)=\delta_{P_e}(k)f(g).
\]
It is defined for locally constant functions compactly supported modulo $K$. It is invariant under right translation by $G$. This is the quotient integral in Lemma~4.1 of \cite{HS17}, with relative modulus $\delta_{P_e}^{-1}$.

For $\Phi\in\ind_{S_e}^{G\times H}W_{e,s}$ and $v\in\sigma$, define
\begin{equation}
  \mathcal T_e(\Phi\otimes v)(h) :=\int_{C_G}\int_{G_e} \operatorname{pr}_e\bigl(\Phi(\ell(b)c,h)\otimes\sigma(\ell(b)c)v\bigr)\,db\,dc. \label{int2}
\end{equation}
For fixed $h$, compact support modulo $S_e$ gives compact support in $K\backslash G$. In the coordinates $g=\ell(b)c$, the integrand therefore has compact support in $G_e\times C_G$. Equation~(\ref{qaction}) gives the required transformation under left translation by $K$. Right $G$-invariance of the quotient integral implies that $\mathcal T_e$ factors through $G$-coinvariants.

Let $r\in R_e$. Write $b_0$ for its image in $G_e$. We set $s_r:=(\ell(b_0),r)\in S_e$. The defining equivariance of compact induction gives $\Phi(g,rh)=W_{e,s}(s_r)\Phi(\ell(b_0)^{-1}g,h)$. The element $\ell(b_0)$ has trivial $P_e$-modulus. Since $\delta_{S_e}(s_r)=\delta_{R_e}(r)$, the action on the coefficient satisfies
\[
  \operatorname{pr}_e\bigl(W_{e,s}(s_r)w\otimes\sigma(\ell(b_0))v\bigr) =\delta_{R_e}(r)^{1/2}N_e(r)\operatorname{pr}_e(w\otimes v).
\]
Substitution $b=b_0b'$ in (\ref{int2}) gives $\mathcal T_e(\Phi\otimes v)(rh) =\delta_{R_e}(r)^{1/2}N_e(r)\mathcal T_e(\Phi\otimes v)(h)$. The group $U_{R_e}$ acts trivially on $W_{e,s}$ by Lemma~\ref{radical}. Hence $\mathcal T_e$ takes values in $\Ind_{R_e}^H N_e$. The map $\mathcal T_e$ intertwines the right $H$-actions by construction.

The tensor identity in Lemma~2.8 and Frobenius reciprocity in Lemma~4.1 of \cite{HS17} give
\[
  (\ind_K^G W_{e,s}\otimes\sigma)_G \simeq(\delta_{P_e}^{-1}\otimes W_{e,s}\otimes\sigma)_K \simeq N_e.
\]
For the last isomorphism, we first take $U_e^G$-coinvariants. Equation~(\ref{qaction}) identifies the remaining $\GL_e$-action with the one defining $N_e$. Induction in stages through $G\times R_e$ gives $\mathcal T_e$. The compactness of $R_e\backslash H$ permits taking $G$-coinvariants on the inducing coefficient. Thus $\mathcal T_e$ is bijective. This proves (\ref{Be}).

At either endpoint of type $D$, the group $G_e$ is trivial. The integral calculation above still applies. Finite central kernels have modulus one.
\end{proof}

\subsection{The remaining Jacquet modules}\label{boundaryterms}

We use the notation of Section~\ref{coinvariants}. Assume $n\ge2$. We use the permutation matrix that puts the summands in the order $L_0,Q_1,\ldots,Q_n,P_2,\ldots,P_n$. Let $V_{d_0,q^n,p^{n-1}}$ be the unipotent radical for these blocks. Its matrices have the form
\begin{equation}
  \setlength{\arraycolsep}{5pt}
v=\begin{pmatrix}
I_{d_0}&z_1&\cdots&z_n&y_2&\cdots&y_n\\
    &I_q&u_1&\star&\star&\cdots&\star\\
    &&\ddots&u_{n-1}&\vdots&&\vdots\\
    &&&I_q&\star&\cdots&\star\\
    &&&&I_p&v_2&\star\\
    &&&&&\ddots&v_{n-1}\\
    &&&&&&I_p
  \end{pmatrix}.
  \label{block}
\end{equation}
The block sizes are $d_0,q,\ldots,q,p,\ldots,p$. There are $n$ blocks of size $q$ and $n-1$ blocks of size $p$. The entries satisfy $z_j\in\Mat_{d_0\times q}(F)$ for $1\le j\le n$. We have $y_j\in\Mat_{d_0\times p}(F)$ for $2\le j\le n$. The adjacent blocks are $u_j:=v_{Q_j,Q_{j+1}}\in\Mat_q(F)$ and $v_j:=v_{P_j,P_{j+1}}\in\Mat_p(F)$. We define
\begin{equation}
  \theta_e(v):=\psi^{-1}\!\left( \sum_{j=1}^{n-1}\tr u_j+ \sum_{j=2}^{n-1}\tr v_j\right).                    \label{theta}
\end{equation}
For $n=2$, there is one block of size $p$, so the second sum is empty.

The character is trivial on the blocks $z_j$ and $y_j$. It is also trivial on every block between a $Q_j$ space and a $P_k$ space. When $i=0$, the first block is $L_0=P_1$. We retain this block of size $e$.

Let $\psi_{q,n}$ and $\psi_{p,n-1}$ be the restrictions of $\theta_e$ to $V_{q^n}$ and $V_{p^{n-1}}$, respectively. Let $U_{(d_0,nq,(n-1)p)}$ be the standard unipotent radical in $\GL_N$ with Levi subgroup $\GL_{d_0}\times\GL_{nq}\times\GL_{(n-1)p}$. For $(\alpha,\beta,\gamma)\in\GL_{d_0}\times\GL_q\times\GL_p$, let $j'(\alpha,\beta,\gamma)$ act by $\alpha$ on $L_0$, by $\beta$ on each $Q_j$, and by $\gamma$ on each $P_j$ with $j\ge2$. In the block order of (\ref{block}), its matrix is $j'(\alpha,\beta,\gamma) :=\diag(\alpha,\beta^{\Delta_n},\gamma^{\Delta_{n-1}})$. After adjoining $D_0$, these matrices normalize the unipotent group defining $J_{V_{ni+m,m^{n-1}},\psi_e}(\Delta(\tau,m+i))$ and preserve its character. They also normalize $V_{d_0,q^n,p^{n-1}}$ and preserve $\theta_e$. Let $\pi_e$ and $\pi'_e$ be their actions on $J_{V_{ni+m,m^{n-1}},\psi_e}(\Delta(\tau,m+i))$ and $J_{V_{d_0,q^n,p^{n-1}},\theta_e}(\Delta(\tau,m+i))$, respectively. The $\GL(L_0)$-action on $J_{V_{ni+m,m^{n-1}},\psi_e}(\Delta(\tau,m+i))$ is the one in Lemma~\ref{endpoint}.

\begin{theorem}\label{coarse}
Assume $n\ge2$. There is an isomorphism $\Phi_e$
\begin{equation}
J_{V_{ni+m,m^{n-1}},\psi_e}(\Delta(\tau,m+i))\simeq J_{V_{d_0,q^n,p^{n-1}},\theta_e} (\Delta(\tau,m+i)).                              \label{Je}
\end{equation}
Jacquet transitivity gives the equivalent expression
\begin{equation}
  \begin{aligned}
    &J_{V_{ni+m,m^{n-1}},\psi_e}(\Delta(\tau,m+i))\\
    &\qquad\simeq J_{V_{q^n}\times V_{p^{n-1}}, \psi_{q,n}\boxtimes\psi_{p,n-1}} \left(J_{U_{(d_0,nq,(n-1)p)},1} (\Delta(\tau,m+i))\right).
  \end{aligned}
  \label{transitivity}
\end{equation}
The inner Jacquet module is a representation of $\GL_{d_0}\times\GL_{nq}\times\GL_{(n-1)p}$. Its $\GL_{d_0}$-action commutes with the outer Jacquet functor. The induced action agrees with the $\GL(L_0)$-action on $J_{V_{ni+m,m^{n-1}},\psi_e}(\Delta(\tau,m+i))$. For $(\alpha,\beta,\gamma)\in\GL_{d_0}\times\GL_q\times\GL_p$, the isomorphism satisfies
\begin{equation}
  \begin{aligned}
    \Phi_e\pi_e(j'(\alpha,\beta,\gamma)) &=\bigl(|\det\gamma|^q|\det\beta|^{-p}\bigr)^{R_n}\\
    &\qquad\cdot\pi'_e(j'(\alpha,\beta,\gamma))\Phi_e.
  \end{aligned}                                      \label{jacobian}
\end{equation}
\end{theorem}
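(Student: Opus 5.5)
We prove Theorem~\ref{coarse} by a chain of root exchanges that moves the unipotent group defining $\mathcal C_e=J_{V_{ni+m,m^{n-1}},\psi_e}(\Delta(\tau,m+i))$ to $V_{d_0,q^n,p^{n-1}}$ while keeping track of the character. We start from the group obtained in Lemma~\ref{endpoint}, namely $D_0\mathcal V_e=V_{d_0,q,m^{n-1}}$ in the order $L_0,Q_1,M_2,\ldots,M_n$, with $M_j=Q_j\oplus P_j$. The character $\psi_e$ pairs the $Q_{j+1}$-part of $M_{j+1}$ with $Q_j$ (through $a_e$ at $j=1$) and $M_{j+1}$ with $M_j$ by the identity for $j\ge2$. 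Permuting the summands to $L_0,Q_1,\ldots,Q_n,P_2,\ldots,P_n$ turns this character into $\theta_e$ on the blocks common to both groups. The two groups then differ in two ways:
\begin{itemize}
\item[(i)] the blocks $(P_j,Q_k)$ with $j\le k$ lie in the first group but not in the second;
\item[(ii)] the blocks $(Q_k,P_j)$ with $k<j$ lie in the second group but not in the first.
\end{itemize}
These two families have equal dimension, $pq\,\#\{j\le k\}$ versus $pq\,\#\{k<j\}$ after the shift of indices $P_j\mapsto P_{j}$ for $j\ge2$, and they are paired by the identity blocks in $\theta_e$. In every root exchange the character is trivial on both exchanged subgroups.

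The steps, in order, are as follows. First, we list the root blocks $X_{j,k}=(P_j,Q_k)$ to be removed and $Y_{k,j}$ to be added, ordered by the distance $k-j$. Starting from the pair nearest the diagonal, we exchange $X_{j,k}$ with the block $Y$ whose commutator with it lands in an adjacent identity block of $\theta_e$. The pairing is $\psi(\tr(XY))$, which is nondegenerate. At each step we check the root exchange hypotheses of \cite[Section~7.1]{GRS11}: the two groups normalize the common subgroup, the commutator $[X,Y]$ lies in the common subgroup, and the character is given on it by the trace pairing. These checks are analogous to the commutator identities (\ref{commutator}) in the proof of Lemma~\ref{D2D3}, and we use the same inductive bookkeeping: after the $j$th step, the earlier blocks are independent and the remaining ones keep their shape. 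Each exchange produces an isomorphism of Jacquet modules, given by integration over the exchanged $X$-block, and the composite of these isomorphisms is $\Phi_e$. We expect no wavefront vanishing to be needed beyond Lemma~\ref{endpoint}, which already supplies the $(L_0,Q_1)$ block. Once $\Phi_e$ is built, (\ref{transitivity}) follows from transitivity of Jacquet functors, because $\theta_e$ is trivial on $U_{(d_0,nq,(n-1)p)}$ and restricts to $\psi_{q,n}\boxtimes\psi_{p,n-1}$ on its Levi. The $\GL_{d_0}$-compatibility follows because $j'(\alpha,1,1)$ normalizes every group in the chain and fixes every character, and every exchange map is equivariant for it, in the same way as Lemma~\ref{endpoint}.

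For (\ref{jacobian}), each exchange map is an integral over a block $X_{j,k}\simeq\Mat_{p\times q}(F)$ or its transpose. Conjugation by $j'(\alpha,\beta,\gamma)$ acts on such a block by $X\mapsto\gamma X\beta^{-1}$ or the inverse, so the measure changes by $|\det\gamma|^{\pm q}|\det\beta|^{\mp p}$. We count the blocks integrated and their orientation to get the net exponent $R_n=(n-1)(n-2)/2$. This equals the number of pairs $2\le j\le k\le n-1$, or equivalently the number of $(P,Q)$ blocks with the relevant ordering once the contributions near the diagonal cancel. The $\alpha$-factor does not contribute.

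The main obstacle is this bookkeeping: choosing an exchange order in which every intermediate commutator stays in the common group with trivial character, and getting the Jacobian exponent and its sign exactly right. For the order I would try first, in reverse by increasing $k-j$, mimicking (3.34)--(3.38) of \cite{GS21}, and I would verify the case $n=3$ by hand to fix the sign of the exponent.
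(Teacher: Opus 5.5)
\textbf{There is a genuine gap.} It sits exactly where you expect that ``no wavefront vanishing'' is needed beyond Lemma~\ref{endpoint}.

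\textbf{The block count is wrong.} Your two families are misidentified, and with the correct count the groups do not have the same dimension. Order the summands as $L_0,Q_1,\ldots,Q_n,P_2,\ldots,P_n$.
\begin{itemize}
\item The blocks lying in $D_0\mathcal V_e=V_{d_0,q,m^{n-1}}$ but not in $V_{d_0,q^n,p^{n-1}}$ are the $(P_j,Q_k)$ with $2\le j<k\le n$. There are $R_n$ of them. The blocks $(P_j,Q_j)$ lie in neither group.
\item The blocks lying only in the target are the $(Q_k,P_j)$ with $2\le j\le k\le n$. There are $n(n-1)/2$ of them. The blocks $(Q_k,P_j)$ with $k<j$ lie in both groups.
\end{itemize}
Hence $\dim V_{d_0,q^n,p^{n-1}}-\dim V_{d_0,q,m^{n-1}}=(n-1)pq$.

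\textbf{Why root exchange cannot close this.} A root exchange trades a subgroup for one paired with it nondegenerately, so it preserves dimension. No chain of exchanges can reach the target group. The case $n=2$ makes this plain. There is nothing to exchange, and the target is $V_{d_0,q,m}$ with the single block $(Q_2,P_2)$ adjoined. That block is exactly $D_2$. So (\ref{Je}) for $n=2$ is precisely the statement that $D_2$ acts trivially, which is Lemma~\ref{D2D3} and rests on (\ref{WFn}). Adjoining a subgroup that only normalizes the group and fixes the character gives a surjection onto its coinvariants. To get an isomorphism you must show the subgroup acts trivially, and here that is a wavefront statement.

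\textbf{How the paper supplies the missing $(n-1)pq$ dimensions.} It uses two vanishing steps.
\begin{enumerate}
\item Before any exchange, it adjoins $D_2$ by Lemma~\ref{D2D3}. This is one copy of $\Mat_{q\times p}$, placed diagonally in the blocks $(Q_j,P_j)$ for $2\le j\le n$.
\item It then runs the $n-2$ exchanges of (12.63)--(12.66) and the $(n-3)(n-2)/2$ exchanges of Proposition~12.9 of \cite{GS22}. The result is a group ${}^0V$ of the target shape, except that the blocks $(Q_n,P_j)$ for $2\le j<n$ are still absent.
\item These remaining $(n-2)pq$ dimensions are adjoined through the subgroup $R$, after showing that $R$ acts trivially on $J_{{}^1V,\theta_1}$. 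Every nontrivial character extending $\theta_1$ to ${}^1VR=V_{d_0,q^n,(n-1)p}$ is supported on adjacent blocks, with a nilpotent $N$ satisfying $N^n\ne0$. So (\ref{WFn}) kills the corresponding Jacquet module. One then takes the ${}^2V$-Jacquet module.
\end{enumerate}

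\textbf{What survives from your proposal.} Once these two inputs are added, the rest of your outline matches the paper. The blocks exchanged away are exactly the $R_n$ blocks $(P_j,Q_k)$, each contributing $|\det\gamma|^q|\det\beta|^{-p}$ to (\ref{jacobian}). Adjoining trivially acting subgroups contributes no Jacobian. Your transitivity and $\GL_{d_0}$-equivariance arguments go through unchanged.
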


\begin{proof}
By Lemmas~\ref{endpoint} and~\ref{D2D3}, we may adjoin $D_0$ and $D_2$ with trivial character. All subsequent exchanges involve only the $Q_j$ and $P_j$ blocks. They therefore commute with $\GL(L_0)$. On each intermediate unipotent group, we use $\theta_e$ for the character given by the same adjacent trace terms as (\ref{theta}). Before the exchanges, its restriction is the character $\psi_e$ of (\ref{Ce}).

We apply the root exchanges in (12.63)--(12.66) and the proof of Corollary~12.7 of \cite{GS22}, with block dimensions $q$ and $p$. Lemma~\ref{D2D3} supplies the required trivial action of $D_2$. The exchanged blocks are paired by the nondegenerate trace pairing. These exchanges leave $L_0$ fixed.

We next apply the exchanges in the proof of Proposition~12.9 of \cite{GS22}, with block dimensions $d_0,q,p$; see (12.86)--(12.87) there. The trace pairings are perfect for these block dimensions. The resulting group ${}^0V$ consists of matrices
\[
v=\begin{pmatrix}U&X\\0&S\end{pmatrix},\qquad
U\in V_{d_0,q^n},\quad S\in V_{p^{n-1}}.
\]
The block rows of $X$ correspond to $L_0,Q_1,\ldots,Q_n$. Its block columns correspond to $P_2,\ldots,P_n$. Thus $X\in\Mat_{(d_0+nq)\times(n-1)p}(F)$. All blocks of $X$ are arbitrary except $X_{Q_n,P_j}=0$ for $2\le j<n$.

To adjoin the missing blocks, we use the last paragraph of the proof of Proposition~12.9. In its notation, let ${}^1V:=\{v\in{}^0V:S=I\}$ and ${}^2V:=\{v\in{}^0V:U=I,\ X=0\}$. For $r=0,1,2$, let $\theta_r$ be the restriction of $\theta_e$ to ${}^rV$. The subgroup ${}^2V$ normalizes ${}^1V$ and preserves $\theta_1$. Since ${}^0V={}^1V\rtimes{}^2V$, Jacquet transitivity gives
\[
  J_{{}^0V,\theta_0}(\Delta(\tau,m+i))\simeq
  J_{{}^2V,\theta_2}\bigl(J_{{}^1V,\theta_1}(\Delta(\tau,m+i))\bigr).
\]
Let $Z_j\in\Mat_{q\times p}(F)$ for $2\le j\le n$. Put $Z:=(Z_2,\ldots,Z_n)\in\Mat_{q\times(n-1)p}(F)$. Group the summands as $(L_0\oplus Q_1\oplus\cdots\oplus Q_{n-1}),Q_n, (P_2\oplus\cdots\oplus P_n)$. Let $R$ be the subgroup formed by the matrices
\[
r(Z):=\begin{pmatrix}
I_{d_0+(n-1)q}&0&0\\
    0&I_q&Z\\
    0&0&I_{(n-1)p}
  \end{pmatrix}.
\]
The group $R$ normalizes ${}^1V$ and preserves $\theta_1$. Its intersection with ${}^1V$ consists of the $(Q_n,P_n)$-block, on which $\theta_1$ is trivial. We have ${}^1V R=V_{d_0,q^n,(n-1)p}$. The vanishing argument of Proposition~12.9 applies to these adjacent blocks with the bound (\ref{WFn}). It shows that $R$ acts trivially on $J_{{}^1V,\theta_1}(\Delta(\tau,m+i))$. Adjoining $R$ with trivial character and then taking the ${}^2V$-Jacquet module gives $J_{V_{d_0,q^n,p^{n-1}},\theta_e}(\Delta(\tau,m+i))$. This proves (\ref{Je}). Since $\theta_e$ is trivial on $U_{(d_0,nq,(n-1)p)}$, Jacquet transitivity gives (\ref{transitivity}).

The two sequences contribute $n-2$ and $(n-3)(n-2)/2$ root exchanges, respectively; see (12.72) and (12.85) of \cite{GS22}. Their sum is $R_n$. In each exchange, conjugation by $j'(\alpha,\beta,\gamma)$ sends $Y\in\Mat_{p\times q}(F)$ to $\gamma Y\beta^{-1}$. Its Jacobian is $|\det\gamma|^q|\det\beta|^{-p}$. Formula (12.73) of \cite{GS22} therefore gives (\ref{jacobian}).
\end{proof}

\section{The local identity}\label{generic}

We use the notation of Section~\ref{boundaryterms}.

\subsection{The generic local identity}

Assume $n\geq2$ and $e>0$. For $z\in F^\times$, put $b_e(z):=\diag(zI_e,I_{m-2e})\in\GL_{m-e}$. In the coordinates of Section~\ref{boundaryterms}, the scalar $zI_e\in\GL_e$ fixes $L_0$ pointwise. It acts by $b_e(z)$ on each $Q_j$ and by $z^{-1}I_e$ on each $P_j$ for $j\ge2$. Define $d_e(z):=\diag\left(I_{ni+e},\,b_e(z)^{\Delta_n},\, z^{-1}I_{(n-1)e}\right)$. By Theorem~\ref{coarse}, $zI_e\in\GL_e$ acts on $J_{V_{ni+m,m^{n-1}},\psi_e}(\Delta(\tau,m+i))$ through $d_e(z)$ with the additional factor $|z|^{-emR_n}$. This factor follows from (\ref{jacobian}) with $\beta=b_e(z)$ and $\gamma=z^{-1}I_e$.

Consider the unnormalized Jacquet module of $\Delta(\tau,m+i)$ for the composition $(ni+e,n(m-e),(n-1)e)$. We write its irreducible constituents as $\pi_{0,r}\boxtimes\pi_{q,r}\boxtimes\pi_{p,r}$. We retain only the constituents for which both twisted Jacquet modules in (\ref{transitivity}) are nonzero. Let $\omega_{p,r}$ be the central character of $\pi_{p,r}$. We regard $J_{V_{(m-e)^n},\psi_{m-e,n}}(\pi_{q,r})$ as a representation of $F^\times$ through $z\mapsto b_e(z)$. Let $\xi_r$ run over the characters on its irreducible subquotients. We include the choice of $\xi_r$ in the index $r$. The following proof shows that this representation of $F^\times$ is finite dimensional.

\begin{proposition}\label{spectrum}
The representation $\calE_{e,0}$ has a finite filtration stable under $\GL_e\times\GL_{ni+e}\times G_e$. On each successive quotient, $zI_e$ acts by a character. For $n\ge2$, these characters are
\begin{equation}
  \eta_{e,r}(z):=|z|^{e\alpha_e-emR_n} \xi_r(z)\omega_{p,r}(z^{-1}). \label{eta}
\end{equation}
The corresponding characters on $\calE_{e,s}$ are $\eta_{e,r}|\cdot|^{es}$. If $n=1$ and $\tau=\chi$, the list consists of
\begin{equation}
  \eta_e(z):=|z|^{e(i+e)/2}\chi(z)^e. \label{eta1}
\end{equation}
For $n=1$, we write this singleton list as $\eta_{e,1}:=\eta_e$. The assertion applies separately to the two signed endpoint orbits in type $D$.
\end{proposition}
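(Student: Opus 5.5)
The plan is to compute the action of the central element $zI_e\in\GL_e$ on $\calE_{e,0}$ by following it through the isomorphisms of Theorem~\ref{coarse}.

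\emph{Case $n\ge2$, the central factor.} By (\ref{Eaction}), $zI_e$ acts on $\calE_{e,0}$ by the factor $|z|^{e\alpha_e}$ times the action of $\ell_e(zI_e,1,1)$ on $\mathcal C_e$. As recorded before the statement, (\ref{jacobian}) with $\beta=b_e(z)$ and $\gamma=z^{-1}I_e$ converts this into the action of $d_e(z)=j'(1,b_e(z),z^{-1}I_e)$ on $J_{V_{d_0,q^n,p^{n-1}},\theta_e}(\Delta(\tau,m+i))$, with the extra factor $|z|^{-emR_n}$. The element $d_e(z)$ is central in the Levi of $(d_0,nq,(n-1)p)$ only on the third factor; on the second factor it is the diagonal embedding of $b_e(z)$. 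I will then use (\ref{transitivity}) to pass to a finite composition series of the inner Jacquet module $T:=J_{U_{(d_0,nq,(n-1)p)},1}(\Delta(\tau,m+i))$. This series exists because $T$ has finite length, and it is stable under $\GL_{d_0}\times\GL_{nq}\times\GL_{(n-1)p}$.

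\emph{Case $n\ge2$, the constituents.} Applying the exact outer functor gives a filtration of $\mathcal C_e$ whose quotients are
\[
  \pi_{0,r}\boxtimes J_{V_{q^n},\psi_{q,n}}(\pi_{q,r})\boxtimes J_{V_{p^{n-1}},\psi_{p,n-1}}(\pi_{p,r}).
\]
On the last factor, $z^{-1}I_{(n-1)p}$ acts by $\omega_{p,r}(z^{-1})$. For the middle factor, I will apply Lemma~\ref{derivative} with $a=0$ to $\pi_{q,r}$; the wavefront bound passes to constituents as in the first part of its proof. This shows that $J_{V_{q^n},\psi_{q,n}}(\pi_{q,r})$ is finite dimensional, in fact of dimension at most one per constituent, and that the diagonal $\SL_q$ acts trivially. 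Hence the diagonal $\GL_q$, and in particular $b_e(z)$, acts through a finite-dimensional representation of $F^\times$. A composition series of it refines the filtration into quotients on which $zI_e$ acts by $\xi_r(z)$.

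\emph{Stability and assembly.} The filtration must be stable under $\GL_e\times\GL_{ni+e}\times G_e$. The image $\ell_e$ of this group lies in the block-diagonal Levi $j'$-type subgroup acting on $L_0$, the $Q_j$ and the $P_j$, so it preserves the composition series of $T$. For the refinement by $\xi_r$, the group commutes with the diagonal $\GL_q$ image $b_e(F^\times)$. I would therefore filter by generalized eigenspaces, using a $\GL_{ni+e}\times G_e$-stable flag of kernels of $(b_e(z)-\xi)$ powers. Finite dimensionality makes this possible. Collecting the factors $|z|^{e\alpha_e}$, $|z|^{-emR_n}$, $\xi_r(z)$ and $\omega_{p,r}(z^{-1})$ gives (\ref{eta}). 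The remark after the normalization of $\calE_{e,s}$ supplies the twist $|z|^{es}$.

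\emph{Case $n=1$.} Here $\mathcal C_e=\chi\circ\det$ and $\det\ell_e(zI_e,1,1)=z^e$ by (\ref{detell}). Together with $|z|^{e\alpha_e}$ and $\alpha_e=(i+e)/2$, this gives (\ref{eta1}).

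The signed endpoints are handled by conjugating through $c$, as in Proposition~\ref{fibre}. I expect the main obstacle to be the stability of the $\xi_r$-refinement under the whole group: the filtration quotients must carry a well-defined action of $\GL_{ni+e}\times G_e$, which requires that the $G_e$-image commute with $b_e(z)$ inside $\GL_q$.
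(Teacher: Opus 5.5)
Your proposal is correct and follows the paper's proof essentially step for step. The steps are: a composition series of the ordinary Jacquet module for $(ni+e,n(m-e),(n-1)e)$, exactness of the outer twisted Jacquet functors in (\ref{transitivity}), the central character $\omega_{p,r}$ on the last factor, finite dimensionality of the middle rectangular quotient via the proof of Lemma~\ref{derivative}, and the factors $|z|^{e\alpha_e}$ and $|z|^{-emR_n}$ from (\ref{Eaction}) and (\ref{jacobian}). The obstacle you anticipate disappears for the reason the paper gives: $zI_e$ is central in $\GL_e\times\GL_{ni+e}\times G_e$, so the refinement is automatically stable under that group, and since the middle quotient has dimension at most one, $b_e(z)$ already acts by a scalar on each constituent.
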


\begin{proof}
Assume first that $n\geq2$. The ordinary Jacquet module for $(ni+e,n(m-e),(n-1)e)$ has finite length. The argument for ordinary Jacquet constituents in the proof of Lemma~\ref{derivative} applies to its middle Levi factor. It shows that every wavefront partition of $\pi_{q,r}$ has first part at most $n$. The dimension calculation in the same proof, with $m$ replaced by $m-e$, gives $\dim J_{V_{(m-e)^n},\psi_{m-e,n}}(\pi_{q,r})=1$.

We apply the two twisted Jacquet functors in (\ref{transitivity}) to a composition series of the ordinary Jacquet module. Both functors are exact. On each resulting quotient, $d_e(z)$ acts on the middle factor through $b_e(z)$. It acts on the last factor by $\omega_{p,r}(z^{-1})$. Since the middle quotient is finite dimensional, we can refine the filtration so that $b_e(z)$ acts on each quotient by a character $\xi_r(z)$. Parabolic normalization contributes $|z|^{e\alpha_e}$. The changes of measure in the root exchanges contribute $|z|^{-emR_n}$. Multiplying these factors gives (\ref{eta}).

The matrices $zI_e$ belong to the center of $\GL_e\times\GL_{ni+e}\times G_e$. The refinement can therefore be chosen stable under this group. The unipotent radical of $R_e$ acts trivially by Lemma~\ref{radical}. Hence the refinement is also stable under the full $R_e$-action. We may therefore induce this filtration to $H$.

The parameter contributes $|\det(zI_e)|^s=|z|^{es}$. For $n=1$ there are no root exchanges. The determinant character of $\Delta(\chi,m+i)$ and $\alpha_e=(i+e)/2$ give (\ref{eta1}).
\end{proof}

For a smooth representation $V$ of a locally profinite group $L$, we write $H_j(L,V)$ for smooth group homology. It is the $j$th left derived functor of coinvariants in the category of smooth complex $L$-representations. Thus $H_0(L,V)\simeq V_L$. We use $H_1$ to prove injectivity of the map induced by the open orbit inclusion.

\begin{lemma}
\label{exactness}
Fix $s$ and a smooth $G$-module $\sigma$. Suppose that, for every $e>0$, the coefficient $\calE_{e,s}\otimes r_{U_e^G}(\sigma)$ has a finite filtration stable under $\GL_e$ and the commuting $R_e$-action. Suppose that on each nonzero successive quotient, a subgroup of $Z(\GL_e)$ acts through a nontrivial character. The subgroup and its character may depend on the successive quotient. In type $D$, impose this condition on both signed endpoint terms. Then inclusion of the open orbit induces an $H$-isomorphism
\[
  (J_{0,s}\otimes\sigma)_G \xrightarrow{\ \sim\ }(J_s\otimes\sigma)_G.
\]
If $\sigma$ is admissible, restriction therefore gives an isomorphism
\begin{equation}\label{res}
  \begin{aligned}
    \Hom_{G\times H}(J_s,\sigma^\vee\boxtimes\Pi) &\longrightarrow \Hom_{G\times H}(J_{0,s},\sigma^\vee\boxtimes\Pi),\\
T&\longmapsto T|_{J_{0,s}}
  \end{aligned}
\end{equation}
for every smooth $H$-module $\Pi$.
\end{lemma}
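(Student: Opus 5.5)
We argue with the filtration of Theorem~\ref{geom}, tensored with $\sigma$. Since tensoring with $\sigma$ over $\bbC$ is exact, $J_s\otimes\sigma$ has a finite $G\times H$-stable filtration whose first term is $J_{0,s}\otimes\sigma$ and whose other successive quotients are $J_{e,s}\otimes\sigma$, $e\in\mathfrak E_G\setminus\{0\}$. The long exact sequence of smooth $G$-homology, applied successively to the short exact sequences of this filtration, reduces the claim to
\[
  H_0(G,J_{e,s}\otimes\sigma)=0\quad\text{and}\quad H_1(G,J_{e,s}\otimes\sigma)=0\qquad(e>0).
\]
Indeed, the vanishing of $H_0$ of each higher graded piece gives surjectivity of $(J_{0,s}\otimes\sigma)_G\to(J_s\otimes\sigma)_G$, by induction along the filtration. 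The vanishing of $H_1$ of the quotient $J_s\otimes\sigma/(J_{0,s}\otimes\sigma)$ gives injectivity. That $H_1$ vanishes by the same induction, since $H_1$ of each graded piece does. The connecting maps are $H$-equivariant, so the resulting isomorphism is $H$-equivariant.

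For the vanishing, we extend the computation of Theorem~\ref{boundary} to derived functors. By the tensor identity and Shapiro's lemma for compact induction (Lemmas~2.8 and 4.1 of \cite{HS17}, applied to derived functors), we have
\[
  H_j(G,\ind_K^GW_{e,s}\otimes\sigma)\simeq H_j(K,\delta_{P_e}^{-1}W_{e,s}\otimes\sigma),\qquad K=\GL_e\ltimes U_e^G.
\]
Induction through $R_e$ with compact quotient $R_e\backslash H$ is exact and commutes with these homology groups fibrewise. Since $U_e^G$ is a union of compact open subgroups, taking $U_e^G$-coinvariants is exact and $H_j(U_e^G,-)=0$ for $j>0$. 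The Hochschild--Serre spectral sequence therefore collapses. Using Theorem~\ref{triviality} and (\ref{qaction}), the $K$-homology becomes $H_j(\GL_e,\calE_{e,s}\otimes r_{U_e^G}(\sigma))$, up to the normalizations already matched in that proof. Thus it suffices to show $H_0=H_1=0$ for $\GL_e$ acting on $\calE_{e,s}\otimes r_{U_e^G}(\sigma)$.

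Next we use the given filtration of this coefficient. By the long exact sequence again, it suffices to treat a single nonzero successive quotient $V$ on which some $Z\subset Z(\GL_e)$ acts by a nontrivial character $\chi$. This is the main step. We use the central character argument of \cite[Lemmas~2.15--2.16]{GK23}. Choose $z\in Z$ with $\chi(z)\ne1$. Because $z$ is central, multiplication by $z$ on $V$ is a $\GL_e$-endomorphism. It therefore acts on each $H_j(\GL_e,V)$ through functoriality. The inner action of $z$ on group homology is trivial, since $z$ is central in $\GL_e$ and conjugation by a group element induces the identity on homology. Hence $z$ acts on $H_j(\GL_e,V)$ both as $1$ and as $\chi(z)$, so $H_j=0$. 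The obstacle is to justify that conjugation acts trivially in smooth homology of a locally profinite group. We would check this at the level of the derived functors: it holds on $H_0$, and it extends by universality of $H_j$ as left derived functors, which are computed from projective resolutions. In type $D$, we apply the same argument to each signed term.

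Finally, assume $\sigma$ is admissible. The isomorphism (\ref{res}) follows from Lemma~\ref{homlemma}, which identifies both sides with $\Hom_H$ out of the two coinvariant spaces naturally in $V$. Under this identification, restriction along $J_{0,s}\hookrightarrow J_s$ corresponds to composition with the isomorphism just proved.
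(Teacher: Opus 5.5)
Your proposal is correct and follows the paper's proof. It uses the same reduction, via the orbit filtration and long exact sequences, to $H_0=H_1=0$ for the terms with $e>0$, and the same Shapiro/tensor-identity and exact $U_e^G$-coinvariant step leading to $H_j(\GL_e,\calE_{e,s}\otimes r_{U_e^G}(\sigma))$. It then concludes with the same central character argument.

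The step you flag as the obstacle is settled in the paper just as you suggest. Multiplication by the central element $z$ and the scalar $\chi(z)$ are two lifts of the same map to a projective resolution, so they are chain homotopic. Multiplication by $z$ becomes the identity after taking $\GL_e$-coinvariants, which forces $H_j=0$.
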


\begin{proof}
Fix $e>0$. Put $K:=P_e^0=\GL_e\ltimes U_e^G$ and $V_e:=(W_{e,s}\otimes\sigma)|_K$. The tensor identity and induction in stages in Theorem~\ref{boundary} reduce the calculation over $G$ to $\ind_K^G V_e$. The proof of Proposition~4.4 of \cite{HS17} gives $H_j(G,\ind_K^G V_e) \simeq H_j(K,\delta_{P_e}^{-1}\otimes V_e)$. The character $\delta_{P_e}^{-1}$ is the relative modulus for $K\backslash G$. The passage between homology and the Ext formulation in that proposition is Lemma~2.15 of \cite{HS17}.

Taking $U_e^G$-coinvariants is exact, as recalled in the proof of Lemma~2.16 of \cite{HS17}. Lemma~2.18 there therefore permits taking these coinvariants before computing homology. By Theorem~\ref{triviality}, $U_e^G$ acts trivially on $W_{e,s}$. Taking these coinvariants therefore replaces $\sigma$ by $\sigma_{U_e^G}$. The identities $W_{e,s}\simeq \delta_{S_e}^{1/2}\calE_{e,s}$, $\sigma_{U_e^G}\simeq \delta_{P_e}^{1/2}r_{U_e^G}(\sigma)$, and $\delta_{S_e}=\delta_{P_e}\delta_{R_e}$ give the modulus cancellation for normalized induction to $H$: $\delta_{R_e}^{-1/2}\delta_{S_e}^{1/2} \delta_{P_e}^{1/2}\delta_{P_e}^{-1}=1$. Parabolic induction commutes with this homology calculation. Indeed, its compact realization expresses compact open invariants as finite sums of images of averaging operators. These operators commute with $G$. Smooth $G$-homology commutes with these operators and with filtered unions. We therefore obtain
\begin{equation}
H_j(G,J_{e,s}\otimes\sigma) \simeq\Ind_{R_e}^H H_j\bigl(\GL_e,\calE_{e,s}\otimes r_{U_e^G}(\sigma)\bigr), \qquad j=0,1. \label{H1}
\end{equation}
These maps commute with homomorphisms of the coefficient representations. The remaining actions are those computed in Theorem~\ref{boundary}. Thus the isomorphism is $H$-equivariant. The same calculation applies to each signed endpoint.

Let $V$ be a nonzero successive quotient of a coefficient filtration. Let $Z'\subset Z(\GL_e)$ be the subgroup specified for $V$. Let $\chi$ be its nontrivial character on $V$. Choose $z\in Z'$ with $\chi(z)\ne1$.

Choose a projective resolution $P_\bullet\to V$. Centrality makes multiplication by $z$ a homomorphism of this complex. It lifts the scalar map $\chi(z)\operatorname{id}_V$. The map $\chi(z)\operatorname{id}_{P_\bullet}$ is another lift of the same map, so the two lifts are chain homotopic. After taking $\GL_e$-coinvariants, multiplication by $z$ is the identity in every degree. Consequently, on homology, $\operatorname{id}_{H_j(\GL_e,V)} =\chi(z)\operatorname{id}_{H_j(\GL_e,V)}$. Since $\chi(z)-1\ne0$, this forces $H_j(\GL_e,V)=0$ for $j=0,1$. The long exact sequences of the coefficient filtration and (\ref{H1}) give $H_j(G,J_{e,s}\otimes\sigma)=0$ for every $e>0$ and $j=0,1$.

Put $J_{\partial,s}:=J_s/J_{0,s}$. The long exact sequences of its orbit filtration give $H_j(G,J_{\partial,s}\otimes\sigma)=0$ for $j=0,1$. In the exact sequence
\[
H_1(G,J_{\partial,s}\otimes\sigma) \longrightarrow (J_{0,s}\otimes\sigma)_G \longrightarrow (J_s\otimes\sigma)_G \longrightarrow (J_{\partial,s}\otimes\sigma)_G \longrightarrow0,
\]
the first and fourth terms vanish. The middle map is therefore an isomorphism. All maps are $H$-equivariant. For admissible $\sigma$, Lemma~\ref{homlemma} identifies precomposition with this map with the restriction in (\ref{res}).
\end{proof}

Let $\sigma$ be an admissible representation of $G$ of finite length. For $e>0$ and $z\in F^\times$, equation~(\ref{Eaction}) shows that the action of $zI_e$ on $\calE_{e,s}$ is its action on $\calE_{e,0}$ multiplied by $|z|^{es}$. Let $\{\eta_{e,r}\}_r$ be the scalar characters on the successive quotients of the filtration of $\calE_{e,0}$ in Proposition~\ref{spectrum}. Let $\Xi_e(\sigma)$ be the set of characters of $zI_e$ on the irreducible constituents of $r_{U_e^G}(\sigma)$. Define
\begin{equation}\label{badset}
  \begin{split}
    \mathcal B_e(\tau,\sigma) &:=\bigcup_{\substack{r,\ \mu\in\Xi_e(\sigma)\\
    (\eta_{e,r}\mu)|_{\mathcal O_F^\times}=1}} \left\{X\in\bbC^\times: X^e=(\eta_{e,r}(\varpi)\mu(\varpi))^{-1}\right\},\\
    \mathcal B(\tau,\sigma) &:=\bigcup_{e>0}\mathcal B_e(\tau,\sigma).
  \end{split}
\end{equation}
At the even orthogonal endpoint, we include the characters from both signed terms. There are finitely many such characters by Proposition~\ref{spectrum} and the finite length of $r_{U_e^G}(\sigma)$. Each equation for $X^e$ has at most $e$ solutions. Thus $\mathcal B(\tau,\sigma)$ is finite. The index $e$ runs over the nonopen orbits in Theorem~\ref{geom}. Enumerate the characters in $\Xi_e(\sigma)$ as $\mu_{e,k}$.

\begin{theorem}
Let $\sigma$ be an admissible representation of $G$ of finite length. If $q_F^{-s}\notin\mathcal B(\tau,\sigma)$, there is a natural $H$-equivariant isomorphism
\[
  \mathbf I_s(\tau,\sigma)\simeq \Ind_{Q_{ni}}^H \bigl(\Delta(\tau,i)|\det|^s\boxtimes\sigma^\iota\bigr).
\]
The map from the right hand side to the left hand side is induced by inclusion of the open orbit term.
\end{theorem}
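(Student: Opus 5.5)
The plan is to deduce the theorem from Lemma~\ref{exactness}, combined with Proposition~\ref{integral} for the open orbit. By Theorem~\ref{geom} and Proposition~\ref{integral}, it suffices to verify the hypothesis of Lemma~\ref{exactness}. That is, for every nonopen index $e\in\mathfrak E_G$, the coefficient $\calE_{e,s}\otimes r_{U_e^G}(\sigma)$ must admit a finite filtration, stable under $\GL_e$ and $R_e$, on whose nonzero successive quotients some subgroup of $Z(\GL_e)$ acts through a nontrivial character. The lemma then gives the isomorphism $(J_{0,s}\otimes\sigma)_G\simeq(J_s\otimes\sigma)_G$ induced by the open orbit inclusion. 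Composing with Proposition~\ref{integral} identifies the left side with $\Ind_{Q_{ni}}^H(\Delta(\tau,i)|\det|^s\boxtimes\sigma^\iota)$. Naturality is inherited from the naturality of both maps.

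To produce the filtration, I take the tensor product of two filtrations.
\begin{itemize}
\item On $\calE_{e,s}$, use the filtration of Proposition~\ref{spectrum}. On its $r$-th quotient, $zI_e$ acts by $\eta_{e,r}|\cdot|^{es}$.
\item On $r_{U_e^G}(\sigma)$, use a composition series of this finite length Levi module. On each irreducible constituent, the central element $zI_e$ acts by some $\mu\in\Xi_e(\sigma)$, by Schur's lemma for admissible irreducibles.
\end{itemize}
The tensor product filtration is stable under the diagonal $\GL_e$, under $\GL_{ni+e}\times G_e$, and, by Lemma~\ref{radical}, under $R_e$. On each quotient, $zI_e$ acts by the character $\chi(z)=\eta_{e,r}(z)\mu(z)|z|^{es}$ of $F^\times$. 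I take $Z'=Z(\GL_e)$.

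It remains to show that $\chi$ is nontrivial when $q_F^{-s}\notin\mathcal B(\tau,\sigma)$. Suppose $\chi=1$. Then $\chi|_{\mathcal O_F^\times}=(\eta_{e,r}\mu)|_{\mathcal O_F^\times}=1$, since $|\cdot|^{es}$ is unramified. Evaluating at $\varpi$ gives $(q_F^{-s})^e=|\varpi|^{es}=(\eta_{e,r}(\varpi)\mu(\varpi))^{-1}$. Hence $q_F^{-s}\in\mathcal B_e(\tau,\sigma)$, which is a contradiction.

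For the even orthogonal endpoint, I apply the same argument to each signed term $(d,\pm)$, using that $\mathcal B$ includes both. For $G=\SO_2$ at $e=d=1$, $r_{U_e^G}$ is the identity functor, but the argument is unchanged. For $n=1$, I use \eqref{eta1} in place of \eqref{eta}.

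The main obstacle is bookkeeping rather than new mathematics. One must check that the central element $zI_e$ acts on $\calE_{e,s}\otimes r_{U_e^G}(\sigma)$ diagonally with exactly the characters recorded in $\mathcal B$. In particular, $\Xi_e(\sigma)$ must be computed on the normalized Jacquet module $r_{U_e^G}(\sigma)$ with the same normalization used in Theorem~\ref{boundary}. The Levi action there is via $\ell$ and $\iota_e$, which fix $\GL_e$ pointwise. I would verify this consistency against the modulus cancellations in Theorem~\ref{boundary} and Lemma~\ref{exactness}, and I expect it to hold as set up.
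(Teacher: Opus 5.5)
Your proposal is correct and follows the paper's proof. The paper also combines the filtration of Proposition~\ref{spectrum} with a composition series of $r_{U_e^G}(\sigma)$, checks via (\ref{badset}) that each character $\eta_{e,r}\mu\,|\cdot|^{es}$ is nontrivial, and then applies Lemma~\ref{exactness} and Proposition~\ref{integral}; the one detail to make explicit is that the tensor product filtration should be refined lexicographically, so that each successive quotient is a single product $\mathrm{gr}_r\otimes\mathrm{gr}_k$ on which $zI_e$ acts by one character.
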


\begin{proof}
Let $X:=q_F^{-s}$. By Proposition~\ref{spectrum} and finite length of $r_{U_e^G}(\sigma)$, the representation $\calE_{e,s}\otimes r_{U_e^G}(\sigma)$ has a finite filtration on whose successive quotients $zI_e$ acts by $\eta_{e,r}(z)\mu_{e,k}(z)|z|^{es}$. By (\ref{badset}), each of these characters is nontrivial when $X\notin\mathcal B(\tau,\sigma)$. Lemma~\ref{exactness} applies to these filtrations. It gives an $H$-equivariant isomorphism $(J_{0,s}\otimes\sigma)_G \xrightarrow{\ \sim\ }(J_s\otimes\sigma)_G =\mathbf I_s(\tau,\sigma)$. Proposition~\ref{integral} identifies the left hand side with the induced representation in the theorem. This proves (\ref{induced}).
\end{proof}

For every smooth $H$-module $\Pi$, precomposition with this isomorphism and (\ref{hom2}) give (\ref{hom1}). Under the open orbit identification, precomposition is the restriction map in (\ref{res}). The resulting isomorphism of $\Hom$ spaces is natural in $\Pi$. Conversely, a map of smooth $H$-modules is an isomorphism if precomposition induces a bijection on maps to every smooth $H$-module $\Pi$. Thus the two formulations are equivalent.

The condition on $s$ is sufficient. The identity may also hold when $q_F^{-s}\in\mathcal B(\tau,\sigma)$.

\begin{corollary}\label{cusp}
Let $\sigma$ be a smooth representation of $G$. Suppose that $r_{U_e^G}(\sigma)=0$ for every $P_e$ with $e>0$, including both signed endpoint terms in type $D$. Then (\ref{induced}) holds for every $s$. If $\sigma$ is admissible, (\ref{hom1}) holds as well. In particular, the assertion holds for every irreducible supercuspidal representation of $G$, provided $G$ is not the split group $\SO_2$.
\end{corollary}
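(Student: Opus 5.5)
The plan is to deduce Corollary~\ref{cusp} from Lemma~\ref{exactness} and Theorem~\ref{boundary}, with the vanishing hypothesis playing the role that the condition $q_F^{-s}\notin\mathcal B(\tau,\sigma)$ plays in the proof of the main theorem.

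Fix $s$ and $e>0$, including both signed endpoints in type $D$. By hypothesis $r_{U_e^G}(\sigma)=0$, so the coefficient $\calE_{e,s}\otimes r_{U_e^G}(\sigma)$ is zero. It therefore has the trivial filtration with no nonzero successive quotients. The hypothesis of Lemma~\ref{exactness} about nontrivial central characters then holds vacuously.

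Lemma~\ref{exactness} gives an $H$-isomorphism $(J_{0,s}\otimes\sigma)_G\simeq(J_s\otimes\sigma)_G=\mathbf I_s(\tau,\sigma)$. This step uses no admissibility, only smoothness of $\sigma$. Proposition~\ref{integral} identifies the left side with the induced representation in (\ref{induced}), which gives (\ref{induced}) for every $s$. If $\sigma$ is admissible, composing with (\ref{hom2}) gives (\ref{hom1}).

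For the supercuspidal case, let $\sigma$ be irreducible and supercuspidal. Such a $\sigma$ is admissible, and all its Jacquet modules along proper parabolic subgroups vanish. It remains to check that $P_e$ is proper for every $e>0$.
\begin{itemize}
\item For $G$ symplectic or odd orthogonal, $P_e=Q_e^G$ with $1\le e\le d$ has a nontrivial unipotent radical, so it is proper.
\item For $G=\SO_{2d}$ with $d\ge2$, the parabolics $Q_e^G$ with $e<d$ are proper. The two endpoint parabolics $Q_d^{G,\pm}$ are also proper, as noted after Theorem~\ref{geom}.
\end{itemize}
Hence $r_{U_e^G}(\sigma)=0$ in all these cases.

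The only exception is $G=\SO_2$. There $d=1$, and both endpoint parabolics equal $G$, so their radical is trivial and $r_{U_1^G}(\sigma)=\sigma\neq0$. This is why $\SO_2$ is excluded.

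There is no real obstacle. The one point to check is that Lemma~\ref{exactness} is stated for smooth $\sigma$ and that its proof covers the case where every boundary coefficient is zero. In that case $H_0$ and $H_1$ of each boundary term vanish trivially by (\ref{H1}).
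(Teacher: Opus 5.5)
Your proposal is correct and follows the paper's proof. You make the hypothesis of Lemma~\ref{exactness} vacuous because every boundary coefficient is zero, identify the open-orbit term via Proposition~\ref{integral}, and handle the supercuspidal case by noting that each $P_e$ with $e>0$ is proper unless $G$ is the split group $\SO_2$.
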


\begin{proof}
The coefficients in Lemma~\ref{exactness} vanish by hypothesis. The lemma therefore identifies $(J_s\otimes\sigma)_G$ with $(J_{0,s}\otimes\sigma)_G$. Proposition~\ref{integral} now gives (\ref{induced}).

For a supercuspidal representation, Jacquet modules for proper parabolic subgroups vanish. Each $P_e$ with $e>0$ is proper unless $G$ is the split group $\SO_2$. This proves the assertion for supercuspidal representations.
\end{proof}

\begin{corollary}\label{taucusp}
Assume that $\tau$ is supercuspidal, $n\ge2$, and $n>d$. Then $J_s=J_{0,s}$ for every $s$. Consequently, (\ref{induced}) holds for every $s$ and every smooth $G$-module $\sigma$, including when $G=\SO_2$. For admissible $\sigma$, this also gives (\ref{hom1}).
\end{corollary}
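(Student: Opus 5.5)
To prove Corollary~\ref{taucusp}, I would show that every boundary coefficient vanishes identically. Then the filtration of Theorem~\ref{geom} collapses to its open term, $J_s=J_{0,s}$, and Proposition~\ref{integral} and Lemma~\ref{homlemma} give (\ref{induced}) and (\ref{hom1}) for every $s$ and every smooth $\sigma$.

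By Theorem~\ref{geom}, it suffices to prove $W_{e,s}=0$ for every $e\in\mathfrak E_G$ with $e>0$, including both signed endpoints in type $D$. The underlying space of $W_{e,s}$ is the same twisted Jacquet module $\mathcal C_e$ in every case. By Theorem~\ref{coarse} and (\ref{transitivity}), $\mathcal C_e$ is a twisted Jacquet module of the ordinary Jacquet module
\[
J_{U_{(d_0,nq,(n-1)p)},1}(\Delta(\tau,m+i)),
\]
where $d_0=ni+e$, $q=m-e$ and $p=e$. Since the outer twisted functor is exact, it is enough to show that this ordinary Jacquet module is zero.

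For the vanishing, I would use the supercuspidal support of the Speh representation. $\Delta(\tau,m+i)$ is a subquotient of $\Ind(\tau|\cdot|^{a_1}\boxtimes\cdots\boxtimes\tau|\cdot|^{a_{m+i}})$, with $\tau$ supercuspidal on $\GL_n$. By the geometric lemma of \cite{BZ77}, its Jacquet module along a standard parabolic is zero unless every block size of the composition is a multiple of $n$. The composition here is $(ni+e,n(m-e),(n-1)e)$. The last block has size $(n-1)e$ with $1\le e\le d<n$. This is divisible by $n$ only if $n\mid e$, which is impossible since $0<e<n$. The block $ni+e$ gives the same conclusion. So the Jacquet module vanishes, hence $\mathcal C_e=0$ and $J_{e,s}=0$.

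Note that this argument uses nothing about $\sigma$ and nothing about $G\ne\SO_2$. For $G=\SO_2$ we have $d=1$, so $e=1$ and the block $(n-1)\cdot1$ is not divisible by $n\ge2$.

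The main obstacle is the hypothesis $n\ge2$. Theorem~\ref{coarse} requires it, and for $n=1$ the claim is false anyway. I also need to confirm that the signed endpoint coefficients use the same $\mathcal C_d$; this is stated after Proposition~\ref{evenSO}.
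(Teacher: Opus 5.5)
Your proof is correct and matches the paper's argument. Both reduce each boundary coefficient via (\ref{transitivity}) to the ordinary Jacquet module of $\Delta(\tau,m+i)$ along $(ni+e,n(m-e),(n-1)e)$. Both then use the geometric lemma and the supercuspidal support of the Speh representation to force $n\mid e$, which is impossible for $1\le e\le d<n$. The paper uses the block $ni+e$; your block $(n-1)e\equiv -e \pmod n$ works equally well. Your aside that the statement is false for $n=1$ is inaccurate, since $n>d$ would then force $d=0$ and there would be no boundary orbits, but this does not affect your argument.
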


\begin{proof}
The Speh representation $\Delta(\tau,m+i)$ is a subquotient of parabolic induction from $m+i$ unramified twists of $\tau$. Ordinary Jacquet functors are exact. By the geometric lemma, a nonzero Jacquet term cannot divide one of these $n$-dimensional supercuspidal blocks between two target Levi blocks. Such a division would give a proper Jacquet module of $\tau$, which is zero. Thus every target Levi block in a nonzero term has dimension divisible by $n$.

The first block of the inner Jacquet module in (\ref{transitivity}) has dimension $ni+e$. It follows that $J_{V_{ni+m,m^{n-1}},\psi_e}(\Delta(\tau,m+i))\ne0$ implies $n\mid e$. Since $1\le e\le d<n$, every nonopen coefficient is zero. The same argument applies to both signed endpoint terms in type $D$. The orbit filtration therefore gives $J_s=J_{0,s}$. Proposition~\ref{integral} proves the asserted identity.
\end{proof}

\begin{remark}\label{exception}
The identity can fail at an exceptional parameter. In the symplectic case, take $n=1$, $m=2$, $i=0$, and $\tau=\sigma=\one$. Then $\bfS=\Sp_4(F)$ and $G=H=\Sp_2(F)$. Let $\nu:=|\cdot|$. For $K=G,H$, let $B_K$ be its standard Borel subgroup and put $I_K(\chi):=\Ind_{B_K}^K\chi$, with normalized induction.

The two orbits are $e=0,1$. The stabilizer of the closed orbit is $B_G\times B_H$. Its torus elements have the form $(\diag(a,a^{-1}),\diag(b,b^{-1}))$, with $a,b\in F^\times$. Siegel normalization gives the character $|ab|^{s+3/2}$ on its Levi. Since $\delta_{B_K}^{1/2}(a)=|a|$, its contribution is $J_{1,s}\simeq I_G(\nu^{s+1/2})\boxtimes I_H(\nu^{s+1/2})$. Formula (\ref{Be}) and $r_{U_{B_G}}(\one)\simeq \nu^{-1}$ give $(J_{1,1/2})_G\simeq I_H(\nu)$. The closed orbit term is the quotient in the filtration by the two orbits. Right exactness of coinvariants therefore gives $(J_{1/2})_G\twoheadrightarrow(J_{1,1/2})_G \simeq I_H(\nu)$. The representation $I_H(\nu)$ is infinite dimensional. The right hand side of (\ref{induced}) is $\one_H$ in this example. Hence (\ref{induced}) fails at $s=1/2$.
\end{remark}

\subsection{The metaplectic case}

In this subsection, we assume that the groups are symplectic. Put $\widetilde{\bfS}:=\widetilde\Sp_{2n(m+i)}(F)$, $\widetilde G:=\widetilde\Sp_m(F)$, and $\widetilde H:=\widetilde\Sp_{2ni+m}(F)$. We use the normalized double covers, canonical unipotent splittings, and genuine inducing notation from Section~1 of \cite{GS21}. Let $\mu_2:=\{\pm1\}$ be the central kernel of each cover. We write cover elements as $(g,\epsilon)$ in the normalized cocycle model. A representation is genuine if $(1,\epsilon)$ acts by the scalar $\epsilon$.

Let $\widetilde Q_{n(m+i)}$ be the full inverse image of the Siegel parabolic $Q_{n(m+i)}^{\bfS}$ in $\widetilde{\bfS}$. Similarly, let $\widetilde Q_{ni}$ be the full inverse image of $Q_{ni}^H$ in $\widetilde H$.

We denote the quadratic Hilbert symbol by $(\ ,\ )_F$. Let $x:\Sp_r(F)\to F^\times/F^{\times2}$ be the square class function used in the normalized cocycle of Section~1 of \cite{GS21}. We suppress the even dimension $r$, which is determined by the argument. For a Siegel parabolic element with Levi component $a$, its value is $\det a$ modulo $F^{\times2}$. In the coordinates of (1.28)--(1.29) of \cite{GS21}, the stabilizer map is $t^{(2)}((g,\epsilon),(h,\epsilon')) :=\bigl(t(g,h),\epsilon\epsilon'(x(g),x(h))_F\bigr)$. In particular, $t^{(2)}((1,\epsilon),(1,\epsilon'))=(1,\epsilon\epsilon')$. We use the lift of $\iota$ in (1.39) of \cite{GS21}. In these conventions, $\widetilde\boxtimes$ denotes the covered tensor product. The diagonal $\mu_2$ acts trivially on the tensor product of the two genuine Levi factors, since its action is $\epsilon^2=1$.

Let $\gamma_\psi$ be the normalized Weil factor. For elements $p,p'$ of a Siegel parabolic with Levi components $a,a'$, the normalized cocycle gives $(p,\epsilon)(p',\epsilon') =(pp',\epsilon\epsilon'(\det a,\det a')_F)$. The canonical unipotent splitting is included in this formula. In particular, our Siegel inducing representation $\Delta(\tau,m+i)\gamma_\psi|\det|^s$ has action $(a,\epsilon)\longmapsto \epsilon\gamma_\psi(\det a)|\det a|^s\Delta(\tau,m+i)(a)$ for $a\in\GL_{n(m+i)}(F)$. Let $\sigma$ be a smooth genuine representation of $\widetilde G$. We set $\sigma^\iota(\widetilde g):=\sigma(\widetilde g^\iota)$. For $a\in\GL_{ni}$ and $\widetilde b=(b,\epsilon)\in\widetilde G$, put $m(a,\widetilde b):=\bigl(\diag(a,b,a^*),\epsilon\bigr)$. In the conventions of (3.48)--(3.49) of \cite{GS21}, the representation $\Delta(\tau,i)\gamma_\psi|\det|^s \widetilde\boxtimes\sigma^\iota$ has action
\begin{equation}
m(a,\widetilde b)\longmapsto \gamma_\psi(\det a)(\det a,x(b))_F|\det a|^s \Delta(\tau,i)(a)\otimes\sigma^\iota(\widetilde b). \label{metaction}
\end{equation}
For $i=0$, the general linear factor is trivial, and this action is $\sigma^\iota$. Put
\[
  \begin{aligned}
    \widetilde\rho_s&:=\Ind_{\widetilde Q_{n(m+i)}}^{\widetilde{\bfS}}(\Delta(\tau,m+i)\gamma_\psi|\det|^s),\\
    \widetilde J_s&:=J_{U,\psi_U}(\widetilde\rho_s).
  \end{aligned}
\]

\paragraph{The Jacquet modules.}

The canonical splittings lift the unipotent subgroups in the linear calculation to the cover. These splittings are described in the paragraph preceding (1.5) of \cite{GS21}. They identify the root exchange maps with those in the linear case.

The covered parabolic subgroups are full inverse images of the linear parabolic subgroups. Hence the orbit decomposition is unchanged. The geometric lemma gives a filtration indexed as in Theorem~\ref{geom}. We denote its successive quotients by $\widetilde J_{e,s}$. Its first term $\widetilde J_{0,s}$ corresponds to the open orbit. Choose quotient measures compatible with projection to the linear groups. The Jacobians then agree with those in the linear case.

For $e>0$, let $\widetilde P_e$ and $\widetilde R_e$ be the full inverse images of $P_e$ and $R_e$ in $\widetilde G$ and $\widetilde H$, respectively. Let $\widetilde\GL_e$ be the inverse image of the standard $\GL_e$ factor of the Levi of $P_e$. For Jacquet modules of $\sigma$ along $U_e^G$, we use the canonical splitting of $U_e^G$ and the normalized Jacquet action. For $\widetilde p\in\widetilde P_e$ and $\widetilde r\in\widetilde R_e$, write $p$ and $r$ for their images. Put $\widehat S_e:=\{(\widetilde p,\widetilde r):(p,r)\in S_e\}$ and $\mu_2^{(1)}:=\{((1,\epsilon),(1,1)):\epsilon=\pm1\}$. Pullback by $t^{(2)}$ makes $\widetilde J_s$ genuine in each factor. Since $\sigma$ is genuine, the first central sign acts trivially on $\widetilde J_s\otimes\sigma$. Thus the action of $\widetilde G$ descends to $G=\Sp_m(F)$. The action of $\widetilde H$ remains genuine. Define $\overline S_e:=\widehat S_e/\mu_2^{(1)} \simeq \{(p,\widetilde r):(p,r)\in S_e\}$. Put $P_e^0:=\GL_e\ltimes U_e^G$. Projection to the second factor gives the exact sequence
\begin{equation}
  1\longrightarrow P_e^0 \longrightarrow\overline S_e \longrightarrow\widetilde R_e\longrightarrow1. \label{Stilde}
\end{equation}

Fix $e>0$. Let $Q:=Q_{n(m+i)}^{\bfS}$, and let $\widetilde Q$ be its full inverse image in $\widetilde{\bfS}$. Set $\lambda_s:=\delta_Q^{1/2} \Delta(\tau,m+i)\gamma_\psi|\det|^s$. We regard $\lambda_s$ as a genuine representation of $\widetilde Q$ by inflation from its Levi subgroup.

Choose a lift $\widetilde w_e$ of $w_e$. On $\widetilde w_e^{-1}\widetilde Q\widetilde w_e$, define $({}^{\widetilde w_e}\lambda_s)(x) :=\lambda_s(\widetilde w_e x\widetilde w_e^{-1})$. Put $U_{e,0}:=U\cap w_e^{-1}Qw_e$, as in Proposition~\ref{fibre}. The canonical splitting identifies $U_{e,0}$ with a subgroup of $\widetilde{\bfS}$. For $\widehat s\in\widehat S_e$, the element $t^{(2)}(\widehat s)$ lies in $\widetilde w_e^{-1}\widetilde Q\widetilde w_e$. It normalizes $U_{e,0}$ and preserves $\psi_U|_{U_{e,0}}$. Thus $\widehat S_e$ acts on $J_{U_{e,0},\psi_U|_{U_{e,0}}}({}^{\widetilde w_e}\lambda_s)$. The change of variables contributes the same factor $\mathfrak j_e$ as in (\ref{measure}). We regard this factor and the modulus characters as characters of the covering groups through their projections to the linear groups.

\begin{proposition}\label{fibre2}
The unnormalized and normalized covered coefficients are
\begin{equation}
  \begin{split}
    \widetilde W_{e,s} &:=\mathfrak j_e\, J_{U_{e,0},\psi_U|_{U_{e,0}}}({}^{\widetilde w_e}\lambda_s),\\
    \widetilde\calE_{e,s}&:=\delta_{S_e}^{-1/2}\widetilde W_{e,s}.
  \end{split}
  \label{metfibre}
\end{equation}
Their $\widehat S_e$-actions are independent of the lift $\widetilde w_e$. They depend on $s$ only through $|\det|^s$.
\end{proposition}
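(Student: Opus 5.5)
The plan is to show two things. First, replacing $\widetilde w_e$ by another lift changes ${}^{\widetilde w_e}\lambda_s$ by a character that is trivial on the relevant subgroup. Second, the $s$-dependence factors out as a character. Any two lifts of $w_e$ differ by a central element, $\widetilde w_e' = \widetilde w_e(1,\epsilon)$ with $\epsilon\in\mu_2$. Since $(1,\epsilon)$ is central in $\widetilde{\bfS}$, we get $\widetilde w_e' x\widetilde w_e'^{-1}=\widetilde w_e x\widetilde w_e^{-1}$ for every $x\in\widetilde w_e^{-1}\widetilde Q\widetilde w_e$. So ${}^{\widetilde w_e'}\lambda_s={}^{\widetilde w_e}\lambda_s$ as representations on the same space. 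The twisted Jacquet module along $U_{e,0}$ is then literally the same quotient, with the same $\widehat S_e$-action through $t^{(2)}$. The factor $\mathfrak j_e$ and $\delta_{S_e}^{-1/2}$ are defined through projection to the linear groups and do not involve the lift. This gives independence for both $\widetilde W_{e,s}$ and $\widetilde\calE_{e,s}$.

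Before this, I will check that the action is well defined. For $\widehat s\in\widehat S_e$, the element $t^{(2)}(\widehat s)$ lies in $\widetilde w_e^{-1}\widetilde Q\widetilde w_e$, because $\widetilde Q$ is a full inverse image and the linear statement holds by Proposition~\ref{stab}. It normalizes the canonically split $U_{e,0}$. It also preserves $\psi_U|_{U_{e,0}}$: conjugation of the canonical splitting by a covering element is the canonical splitting of the conjugate (uniqueness of splittings of unipotent groups), so the linear identity transfers. Hence the operators $x\mapsto\lambda_s(\widetilde w_e x\widetilde w_e^{-1})$ descend to the quotient by $\langle u\xi-\psi_U(u)\xi\rangle$.

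For the $s$-dependence, note that $\lambda_s=\lambda_0\otimes|\det|^s$, where $|\det|^s$ is the character of $\widetilde Q$ obtained from the Levi determinant through the projection. This character is trivial on unipotents, so conjugating by $\widetilde w_e$ yields $({}^{\widetilde w_e}\lambda_0)\otimes\chi_s$. Here $\chi_s(x)=|\det \operatorname{pr}_N(w_e\bar x w_e^{-1})|^s$ and $\bar x$ is the image of $x$. The functor $J_{U_{e,0},\psi}$ commutes with twisting by a character trivial on $U_{e,0}$. Restricting $\chi_s$ along $t^{(2)}$ gives $|\det\ell_e(\bar{\widehat s})|^s$, which by (\ref{detell}) is $|\det a\det A|^s$. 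So $\widetilde W_{e,s}=\widetilde W_{e,0}\otimes|\det a\det A|^s$ on the same underlying space, and likewise for $\widetilde\calE_{e,s}$.

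The only delicate point is the compatibility of $t^{(2)}$ and the canonical splitting with conjugation, which gives preservation of the character on the cover. I would settle it by citing the uniqueness of splittings over unipotent subgroups, as in the paragraph preceding (1.5) of \cite{GS21}.
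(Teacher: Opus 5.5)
Your argument is correct and matches the paper's: two lifts differ by a central sign and therefore induce the same conjugation, and the $s$-dependence is the twist by $|\det\ell_e|^s=|\det a\det A|^s$, which is trivial on $U_{e,0}$ and so commutes with the Jacquet functor; your uniqueness-of-splittings check is the well-definedness the paper records just before the proposition. The one step you treat as a definition but the paper actually proves is that (\ref{metfibre}) is the representation occurring in the covered geometric lemma at $w_e$, which the paper obtains by transporting the computation of Proposition~\ref{fibre}, including the Jacobian $\mathfrak j_e$, through the canonical splittings over $U$; you should add that sentence, since Theorem~\ref{layers2} relies on it.
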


\begin{proof}
With the canonical unipotent splittings, the coefficient calculation in Proposition~\ref{fibre} applies to $\lambda_s$. This gives (\ref{metfibre}) with the $\widehat S_e$-action defined above. The two lifts of $w_e$ differ by a central sign. They therefore induce the same conjugation action. Only the factor $|\det|^s$ depends on $s$.
\end{proof}

\begin{lemma}\label{metboundary}
Let $\sigma$ be a smooth genuine representation of $\widetilde G$. The first central sign acts trivially on $\widetilde W_{e,s}\otimes\sigma$. Let $\overline W_{e,s}(\sigma)$ be the resulting representation of $\overline S_e$. Consider the tensor product
\begin{equation}
  \widetilde\calE_{e,s}\otimes r_{U_e^G}(\sigma). \label{Bes}
\end{equation}
The central sign of the diagonal $\widetilde\GL_e$ acts trivially on this tensor product. Thus the action descends to $\GL_e$. The action of $\widetilde R_e$ is genuine. There are isomorphisms
\begin{align}
  \widetilde J_{e,s}\otimes\sigma &\simeq\ind_{\overline S_e}^{G\times\widetilde H} \overline W_{e,s}(\sigma), \label{cind}\\
  (\widetilde J_{e,s}\otimes\sigma)_G &\simeq\Ind_{\widetilde R_e}^{\widetilde H} \bigl(\widetilde\calE_{e,s}\otimes r_{U_e^G}(\sigma)\bigr)_{\GL_e}. \label{metcoinv}
\end{align}
\end{lemma}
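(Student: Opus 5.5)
The proof transfers the linear argument of Theorem~\ref{boundary} to the cover, keeping track of the central signs at each step. First we dispose of the sign statements. The cover $\widetilde J_s$ is pulled back by $t^{(2)}$, and $t^{(2)}((1,\epsilon),(1,\epsilon'))=(1,\epsilon\epsilon')$. Hence $(1,\epsilon)$ in $\widetilde G$ acts on $\widetilde\rho_s$, and so on $\widetilde W_{e,s}$, by the scalar $\epsilon$, because the Siegel inducing datum is genuine. Since $\sigma$ is genuine, the first central sign acts on $\widetilde W_{e,s}\otimes\sigma$ by $\epsilon^2=1$. This gives $\overline W_{e,s}(\sigma)$ as a representation of $\overline S_e=\widehat S_e/\mu_2^{(1)}$. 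For the tensor product (\ref{Bes}), the sign of the diagonal $\widetilde\GL_e$ sits inside $\widetilde G$. It acts by $\epsilon$ on $\widetilde\calE_{e,s}$, since the modulus factor $\delta_{S_e}^{-1/2}$ is pulled back from the linear group. It also acts by $\epsilon$ on $r_{U_e^G}(\sigma)$, because the canonical unipotent splitting keeps the Jacquet module genuine. The product is therefore trivial. The sign $(1,\epsilon')$ of $\widetilde H$ meets only the first factor, where it acts by $\epsilon'$, so the $\widetilde R_e$-action is genuine.

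For (\ref{cind}), we use the filtration described before Proposition~\ref{fibre2}. The geometric lemma identifies $\widetilde J_{e,s}$ with $\ind_{\widehat S_e}^{\widetilde G\times\widetilde H}\widetilde W_{e,s}$. Here we use the covered fibre computation of Proposition~\ref{fibre2}, the canonical splittings, and the linear root exchanges. Tensoring with $\sigma$, we apply the tensor identity for compact induction (Lemma~2.8 of \cite{HS17}). This gives $\ind_{\widehat S_e}^{\widetilde G\times\widetilde H}(\widetilde W_{e,s}\otimes\sigma|_{\widehat S_e})$. Both the inducing representation and the group now factor through the quotient by $\mu_2^{(1)}$, which is central and lies in $\widehat S_e$. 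Functions on $\widetilde G\times\widetilde H$ that are equivariant under $\mu_2^{(1)}$ by a trivial character are functions on $G\times\widetilde H$. This yields (\ref{cind}). The exact sequence (\ref{Stilde}) then shows that the kernel of $\overline S_e\to\widetilde R_e$ is the linear group $P_e^0=\GL_e\ltimes U_e^G$. Over $P_e^0$ we need a splitting that is compatible with $t^{(2)}$. The canonical splitting handles $U_e^G$. On $\GL_e$, the cocycle becomes a Hilbert symbol, and this symbol cancels in the tensor product by the sign computation above.

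For (\ref{metcoinv}), we repeat the proof of Theorem~\ref{boundary} word for word. Theorem~\ref{triviality} transfers through the canonical splittings, so $U_e^G$ acts trivially on $\widetilde W_{e,s}$. Taking $U_e^G$-coinvariants replaces $\sigma$ by $\delta_{P_e}^{1/2}r_{U_e^G}(\sigma)$. The modulus identity (\ref{qaction}) holds unchanged, since all modulus characters are pulled back from linear groups. The integral $\mathcal T_e$ of (\ref{int2}) is defined over $K\backslash G$, with $G$ linear, and gives the map to $\Ind_{\widetilde R_e}^{\widetilde H}$ of the $\GL_e$-coinvariants. Lemma~\ref{radical} transfers to give triviality of $U_{R_e}$. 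Frobenius reciprocity and the tensor identity over $K=P_e^0$ then show that $\mathcal T_e$ is bijective, and compactness of $\widetilde R_e\backslash\widetilde H$ allows coinvariants to pass through induction.

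The main obstacle is the equivariance step $\Phi(g,rh)=\overline W_{e,s}(s_r)\Phi(\ell(b_0)^{-1}g,h)$. Here $s_r=(\ell(b_0),\widetilde r)$ must be a well-defined element of $\overline S_e$. Because the $G$-component is linear, $s_r$ is determined only modulo $\mu_2^{(1)}$, and this is exactly what is needed. The remaining check is that the Hilbert symbols $(\det a,x(b))_F$ in (\ref{metaction}) produced by $t^{(2)}$ on $\widehat S_e$ combine consistently with the genuine action of $\widetilde R_e$. I would check this with the cocycle formula for Siegel parabolic elements.
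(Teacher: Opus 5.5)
Your proposal is correct and follows the same route as the paper. You check the central signs through $t^{(2)}$ and the genuineness of $\sigma$. You then get (\ref{cind}) by tensoring the geometric-lemma model with $\sigma$ and passing to the quotient by $\mu_2^{(1)}$, and you rerun the integral (\ref{int2}) and the modulus calculation of Theorem~\ref{boundary} over the kernel $P_e^0$ of (\ref{Stilde}).

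The two checks you leave open, a splitting over $P_e^0$ and the Hilbert-symbol consistency for $s_r$, are in fact automatic. $P_e^0$ sits in $\overline S_e$ as the image of its full inverse image modulo $\mu_2^{(1)}$. The $\widetilde R_e$-action on the $\GL_e$-coinvariants is by definition the quotient action of $\overline S_e/P_e^0\simeq\widetilde R_e$. The equivariance of $\Phi$ under any lift $s_r\in\overline S_e$ of $\widetilde r$ is therefore just the defining property of compact induction.
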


\begin{proof}
We tensor the model for the $e$-th orbit with $\sigma$. The first central sign acts trivially on $\widetilde W_{e,s}\otimes\sigma$. Passing to the quotient by $\mu_2^{(1)}$ gives (\ref{cind}).

The kernel of (\ref{Stilde}) is $P_e^0$, as in the linear calculation. The central sign acts trivially on the tensor product of the two representations of $\widetilde\GL_e$. The tensor product therefore descends to $\GL_e$. The canonical splittings carry Theorem~\ref{triviality} to the covered coefficient, so $U_e^G$ acts trivially on $\widetilde W_{e,s}$. The integral (\ref{int2}) and the modulus calculation in the proof of Theorem~\ref{boundary} give (\ref{metcoinv}).
\end{proof}

\begin{theorem}\label{layers2}
The module $\widetilde J_s$ has a finite orbit filtration indexed as in the linear symplectic case. Let $\widetilde J_{0,s}$ be the term corresponding to the open orbit. For every smooth genuine $\widetilde G$-module $\sigma$, we have
\[
  (\widetilde J_{0,s}\otimes\sigma)_G \simeq\Ind_{\widetilde Q_{ni}}^{\widetilde H} \bigl(\Delta(\tau,i)\gamma_\psi|\det|^s \widetilde\boxtimes\sigma^\iota\bigr).
\]
For $e>0$, the coinvariants $(\widetilde J_{e,s}\otimes\sigma)_G$ are given by (\ref{metcoinv}).
\end{theorem}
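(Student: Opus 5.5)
The plan is to repeat the linear argument of Theorems~\ref{geom} and~\ref{layers} on the cover, and to track only the genuine signs and the Weil factors. First I would obtain the filtration. The full inverse images of the linear parabolics give the same double cosets, so supports on the successive double cosets filter $\widetilde J_s$ as in Theorem~\ref{geom}. The canonical unipotent splittings identify the twisted Jacquet functors and the root exchanges with the linear ones. Hence Lemma~\ref{intersection}, Proposition~\ref{rzero} and Lemma~\ref{D2D3} transfer verbatim, and the wavefront bound (\ref{WFn}) is unaffected by the genuine twist $\gamma_\psi$. This gives the finite filtration with the same index set. For $e>0$ the statement is then exactly (\ref{metcoinv}) of Lemma~\ref{metboundary}, so nothing further is needed there.

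The real work is the open orbit. By Proposition~\ref{metboundary}'s analogue at $e=0$ (that is, (\ref{cind}) with $e=0$), $\widetilde J_{0,s}\otimes\sigma$ is compactly induced from $\overline S_0=\{(b^\iota,\widetilde r)\}$ with $\widetilde r\in\widetilde Q_{ni}$. The coefficient is $\widetilde W_{0,s}$ tensored with $\sigma$ restricted along $b\mapsto b^\iota$. I would compute $\widetilde W_{0,s}$ by Proposition~\ref{openSpeh}: the Jacquet module $J_{V_{ni,m^n},\psi_{m,n}}$ of the genuine inducing datum is $\Delta(\tau,i)|\det|^{-m/2}$ on $\GL_{ni}$ with trivial $\SL_m$-action, multiplied by the restriction of $\gamma_\psi(\det\cdot)$ and by the cocycle. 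The determinant of $\ell_0(b^\iota,m(a,\widetilde b))$ is $\det a$ by (\ref{detell}). So the Weil factor contributes $\gamma_\psi(\det a)$. The cocycle of $t^{(2)}$ contributes $(x(b^\iota),x(m(a,b)))_F$, together with the Siegel cocycle terms.

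The key step I expect to be the main obstacle is this sign bookkeeping. I need the characters assemble to exactly (\ref{metaction}), that is, to $\gamma_\psi(\det a)(\det a,x(b))_F$ times the genuine action of $\widetilde b$ through the lift of $\iota$ in (1.39) of \cite{GS21}. I would first check this on the torus, where $x$ is explicit and the Hilbert symbols can be multiplied out. I would then extend using that both sides are characters, times $\sigma^\iota$, of $\overline S_0$ that agree on unipotents by the canonical splittings. The identity $x(b^\iota)=x(b)$, coming from the choice of lift of $\iota$, should cancel the cross term. The remaining term $(\det a,x(b))_F$ comes from the cocycle on $m(a,b)$.

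With the inducing character identified, I would rerun the proof of Proposition~\ref{integral} with $\mathcal T$ from (\ref{int1}). Here $G$ is the linear group, since the first central sign acts trivially. The Hecke-type isomorphism $(C_c^\infty(G)\otimes\sigma)_G\simeq\sigma$, induction in stages through $\widetilde Q_{ni}$, and compactness of $\widetilde Q_{ni}\backslash\widetilde H$ then give the stated isomorphism. The genuine central sign of $\widetilde H$ matches on both sides.
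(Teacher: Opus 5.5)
Your proposal is correct and follows the paper's proof. The filtration comes from the geometric lemma together with the canonical unipotent splittings, as in Proposition~\ref{fibre2}. The terms with $e>0$ are exactly Lemma~\ref{metboundary}, and the open term is handled by rerunning Proposition~\ref{integral} once the Levi action is identified with (\ref{metaction}). The only difference is that you plan to verify the Weil-factor and cocycle signs yourself, by a torus computation extended through the character property and the unipotent splittings, whereas the paper imports this identification directly from (3.48) of \cite{GS21}.
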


\begin{proof}
The orbit filtration follows from the geometric lemma and Proposition~\ref{fibre2}. The two genuine central actions cancel on the open term. Proposition~\ref{integral} therefore applies with the Levi action (\ref{metaction}), as in (3.48) of \cite{GS21}. Lemma~\ref{metboundary} gives the other terms.
\end{proof}

\paragraph{Proof of the identity.}

For $e>0$, we consider the square scalar lifts $\widetilde{\mathbf z}_e(t):=(t^2I_e,1)$, with $t\in F^\times$. To compute their action, we first conjugate them by the orbit representative. Put $c_e:=m-2e$ and, for $z\in F^\times$, set $g_e(z):=\diag(zI_e,I_{c_e},z^{-1}I_e)$, $A_e(z):=t(g_e(z),1)$, and $B_e(z):=w_eA_e(z)w_e^{-1}$. Both $A_e(z)$ and $B_e(z)$ lie in the ambient Siegel parabolic. The matrix $A_e(z)$ belongs to its Levi subgroup. The determinant of its Levi component is $z^e$. The determinant of the Levi component of $B_e(z)$ is also $z^e$, by (\ref{detell}). For a lift $\widetilde w_e$, let $\epsilon_e(t)\in\mu_2$ denote the central sign of $\widetilde w_e(A_e(t),1)\widetilde w_e^{-1}$. Then $\widetilde w_e(A_e(t),1)\widetilde w_e^{-1} =(B_e(t),\epsilon_e(t))$. On squaring this equality, the multiplication on each side contributes the same Hilbert symbol $(t^e,t^e)_F$. Also $\epsilon_e(t)^2=1$. Cancelling these factors gives $\widetilde w_e(A_e(t^2),1)\widetilde w_e^{-1} =(B_e(t^2),1)$. Thus conjugation introduces no sign on the square scalar lifts. Their determinants are squares, so $\gamma_\psi(t^{2e})=1$. The mixed Hilbert symbols with the other Levi factors are also one. The lifts therefore commute with all the remaining Levi actions and satisfy $\widetilde{\mathbf z}_e(t)\widetilde{\mathbf z}_e(u) =\widetilde{\mathbf z}_e(tu)$. The inducing character $|\det|^s$ has value $|\det(t^2I_e)|^s=|t|^{2es}$.

The proof of Proposition~\ref{spectrum} gives a finite filtration of $\widetilde\calE_{e,0}$ on whose successive quotients $\widetilde{\mathbf z}_e(t)$ acts by a character $\eta_{e,r}^\square(t)$ of $F^\times$. The middle factors give the same finite dimensional quotients as in that proof. All additional cocycle and Weil factors are trivial on $\widetilde{\mathbf z}_e(t)$. The Levi actions commute with these central lifts. Hence the filtration can be chosen stable under the Levi actions.

Assume that $\sigma$ is admissible of finite length. Let $\Xi_{e,\sigma}^\square$ be the finite set of characters by which $\widetilde{\mathbf z}_e(t)$ acts on the irreducible constituents of $r_{U_e^G}(\sigma)$. Define
\begin{equation}
  \mathcal B_e^{(2)}(\tau,\sigma):= \bigcup_{\substack{r,\,\mu\in\Xi_{e,\sigma}^\square\\
  (\eta_{e,r}^\square\mu)|_{\mathcal O_F^\times}=1}} \left\{X\in\bbC^\times: X^{2e}=(\eta_{e,r}^\square(\varpi)\mu(\varpi))^{-1}\right\}. \label{square}
\end{equation}
The character lists are finite. Each equation for $X^{2e}$ has at most $2e$ solutions. Thus $\mathcal B_e^{(2)}(\tau,\sigma)$ is finite.

\begin{theorem}
\label{metmain}
Let $\sigma$ be an admissible genuine representation of $\widetilde G$ of finite length. If $q_F^{-s}\notin\bigcup_{e>0}\mathcal B_e^{(2)}(\tau,\sigma)$, there is a natural $\widetilde H$-equivariant isomorphism
\begin{equation}\label{metind}
  (\widetilde J_s\otimes\sigma)_G \simeq\Ind_{\widetilde Q_{ni}}^{\widetilde H} \bigl(\Delta(\tau,i)\gamma_\psi|\det|^s \widetilde\boxtimes\sigma^\iota\bigr).
\end{equation}
The map from the right hand side to the left hand side is induced by inclusion of $\widetilde J_{0,s}$.
\end{theorem}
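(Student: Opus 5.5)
The proof will follow the linear case almost verbatim, with Theorem~\ref{layers2} as the starting point. By that theorem, $\widetilde J_s$ has an orbit filtration with open term $\widetilde J_{0,s}$, and
\[
(\widetilde J_{0,s}\otimes\sigma)_G\simeq\Ind_{\widetilde Q_{ni}}^{\widetilde H}\bigl(\Delta(\tau,i)\gamma_\psi|\det|^s\widetilde\boxtimes\sigma^\iota\bigr).
\]
The task is therefore to show that inclusion of the open term induces an isomorphism on $G$-coinvariants. For this I will establish a covered analogue of Lemma~\ref{exactness}: I will show that $H_j(G,\widetilde J_{e,s}\otimes\sigma)=0$ for $j=0,1$ and every $e>0$. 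The long exact sequence for $\widetilde J_s/\widetilde J_{0,s}$ then gives the isomorphism, exactly as at the end of the proof of Lemma~\ref{exactness}.

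The first step is the homological reduction. From (\ref{cind}) and the exact sequence (\ref{Stilde}), the $G$-homology of $\widetilde J_{e,s}\otimes\sigma$ reduces, by the Shapiro-type isomorphism of Proposition~4.4 of \cite{HS17} applied to $\ind_{P_e^0}^G$, to $P_e^0$-homology of the coefficient. I will take $U_e^G$-coinvariants first; this is exact, and Lemma~\ref{metboundary} shows that $U_e^G$ acts trivially on $\widetilde W_{e,s}$. Induction in stages to $\widetilde H$, with compact quotient $\widetilde R_e\backslash\widetilde H$, then gives
\[
H_j(G,\widetilde J_{e,s}\otimes\sigma)\simeq\Ind_{\widetilde R_e}^{\widetilde H}H_j\bigl(\GL_e,\widetilde\calE_{e,s}\otimes r_{U_e^G}(\sigma)\bigr),\qquad j=0,1,
\]
with the same modulus cancellation as in (\ref{H1}).

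The second step is the central character argument, and this is where the cover requires care. The scalars $zI_e$ do not lift to a central homomorphic copy of $F^\times$ commuting with everything: Hilbert symbols and Weil factors intervene. I will therefore use only the square lifts $\widetilde{\mathbf z}_e(t)$. The paragraph preceding the theorem establishes that they form a homomorphic image of $F^\times$, that conjugation by $\widetilde w_e$ introduces no sign, and that they commute with all Levi actions. So they lie in the centre of the relevant group acting on the coefficient and descend to central elements of $\GL_e$ on (\ref{Bes}). Using the filtration of $\widetilde\calE_{e,0}$ by the characters $\eta^\square_{e,r}$, and the finite length of $r_{U_e^G}(\sigma)$, I will filter $\widetilde\calE_{e,s}\otimes r_{U_e^G}(\sigma)$ so that $\widetilde{\mathbf z}_e(t)$ acts on each successive quotient by $\eta^\square_{e,r}(t)\mu(t)|t|^{2es}$ with $\mu\in\Xi^\square_{e,\sigma}$. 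The filtration must be stable under $\GL_e$ and $\widetilde R_e$; this holds because the lifts are central.

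To check that each such character is nontrivial when $X=q_F^{-s}\notin\mathcal B^{(2)}_e(\tau,\sigma)$, I argue as follows. If its restriction to $\mathcal O_F^\times$ is nontrivial we are done. Otherwise triviality at $\varpi$ means exactly
\[
X^{2e}=\bigl(\eta^\square_{e,r}(\varpi)\mu(\varpi)\bigr)^{-1},
\]
which is excluded by (\ref{square}). The subgroup $\{\widetilde{\mathbf z}_e(t)\}$ then acts on each quotient by a nontrivial character. This is precisely the hypothesis of Lemma~\ref{exactness}, which allows the subgroup of $Z(\GL_e)$ to be of this kind. The chain homotopy argument there kills $H_0$ and $H_1$ of each quotient, and the long exact sequences finish the argument. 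The same reasoning applies at every $e>0$; there are no signed endpoints, since the groups are symplectic.

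I expect the main obstacle to be the compatibility of the reduction in the first step with the genuine structure. Specifically, after taking the quotient by $\mu_2^{(1)}$, the $G$-homology must be computable over the linear group $P_e^0$, and the square lifts must act centrally on the descended $\GL_e$-module (\ref{Bes}) rather than only up to sign. I would verify this directly: the lifts have trivial cocycle against all Levi elements, by the Hilbert symbol computation already recorded in the paper, and the covering splittings over unipotent groups are canonical. Once this is settled, the remainder is identical to the linear proof.
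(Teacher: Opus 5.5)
Your proposal is correct and is essentially the paper's own proof. You reduce $H_j(G,\widetilde J_{e,s}\otimes\sigma)$, $j=0,1$, via (\ref{cind}) and the homology calculation of Lemma~\ref{exactness} to $\Ind_{\widetilde R_e}^{\widetilde H}H_j\bigl(\GL_e,\widetilde\calE_{e,s}\otimes r_{U_e^G}(\sigma)\bigr)$, then kill these groups by the central character argument for the square lifts $\widetilde{\mathbf z}_e(t)$, whose characters $\eta^\square_{e,r}\mu|\cdot|^{2es}$ are nontrivial off (\ref{square}), and conclude with the long exact sequence and Theorem~\ref{layers2}, while the compatibility issue you flag is exactly what the paragraph preceding the theorem and Lemma~\ref{metboundary} supply.
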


\begin{proof}
The coefficient (\ref{Bes}) has a finite filtration on whose successive quotients $\widetilde{\mathbf z}_e(t)$ acts by $\eta_{e,r}^\square(t)\mu(t)|t|^{2es}$, with $\mu\in\Xi_{e,\sigma}^\square$. By (\ref{square}), these characters are nontrivial under the hypothesis of the theorem. We apply the homology calculation in the proof of Lemma~\ref{exactness} to (\ref{cind}). The coefficient is identified in Lemma~\ref{metboundary}. We obtain
\[
  \begin{aligned}
    &H_j(G,\widetilde J_{e,s}\otimes\sigma)\\
    &\qquad\simeq\Ind_{\widetilde R_e}^{\widetilde H} H_j\bigl(\GL_e,\widetilde\calE_{e,s}\otimes r_{U_e^G}(\sigma)\bigr), \qquad j=0,1.
  \end{aligned}
\]
The right hand side vanishes by the central character argument in Lemma~\ref{exactness}. The long exact sequences used in that lemma therefore give $(\widetilde J_{0,s}\otimes\sigma)_G \xrightarrow{\ \sim\ }(\widetilde J_s\otimes\sigma)_G$. Theorem~\ref{layers2} identifies the left hand side with the induced representation in (\ref{metind}).
\end{proof}

The first central sign acts trivially on $\widetilde J_s\otimes\sigma$. Its $G$-coinvariants therefore agree with its $\widetilde G$-coinvariants. Lemma~\ref{homlemma} gives the equivalent identity
\[
  \begin{aligned}
    &\Hom_{\widetilde G\times\widetilde H} (\widetilde J_s,\sigma^\vee\boxtimes\Pi)\\
    &\quad\simeq\Hom_{\widetilde H}\!\left( \Ind_{\widetilde Q_{ni}}^{\widetilde H} \bigl(\Delta(\tau,i)\gamma_\psi|\det|^s \widetilde\boxtimes\sigma^\iota\bigr),\Pi\right)
  \end{aligned}
\]
for every smooth genuine $\widetilde H$-module $\Pi$. The isomorphism is natural in $\Pi$. It is given by restriction to $\widetilde J_{0,s}$ and the open orbit calculation.

If $r_{U_e^G}(\sigma)=0$ for every $e>0$, the same argument gives (\ref{metind}) for every $s$.

\section{Twisted Jacquet modules over finite fields}\label{Fq}

Let $k:=\mathbf F_q$ have odd characteristic $p$. In this section, all representations are finite dimensional over $\bbC$. Group symbols denote groups of $k$-points. Induction is ordinary induction of finite groups.

Let $\pi$ be a representation of a finite group. Let $V$ be a subgroup and $\chi$ a character of $V$. Write $J_{V,\chi}(\pi)$ for the quotient by the span of $\pi(v)x-\chi(v)x$, for $v\in V$ and $x\in\pi$. The averaging operator $e_\chi:=\frac1{|V|}\sum_{v\in V}\chi(v)^{-1}\pi(v)$ identifies this quotient with the $\chi$-isotypic subspace. Since $e_\chi$ is an idempotent, this quotient functor is exact.

Fix either the symplectic family $\mathcal H_r:=\Sp_r$ or the split special orthogonal family $\mathcal H_r:=\SO_r^{\mathrm{split}}$. Fix integers $m\geq2$, $n\geq1$, and $i\geq0$. Assume that $m$ is even in the symplectic case. Put $d:=\lfloor m/2\rfloor$ and $N:=n(m+i)$, and assume $n>d$. In particular, $n\geq2$. Put $\bfS:=\mathcal H_{2N}(k)$, $G:=\mathcal H_m(k)$, and $H_i:=\mathcal H_{2ni+m}(k)$. For a group $K$ in the chosen family, let $Q_a^K$ be its standard parabolic with general linear Levi factor $\GL_a(k)$. Put $Q_0^K:=K$. Let $Q_N:=Q_N^{\bfS}$ be the Siegel parabolic of $\bfS$. In the orthogonal case, $Q_N$ selects one of the two orbits of maximal isotropic subspaces under $\bfS$. We retain the signed endpoint parabolics of Proposition~\ref{evenSO}. Set $\GL_0(k):=\{1\}$. Fix a nontrivial additive character $\psi$ of $k$. The matrices for the chosen family in Section~\ref{matrices}, now taken over $k$, define $U$, $\psi_U$, and $t:G\times H_i\longrightarrow\bfS$.

The Levi embeddings are given by (\ref{Levi}) over $k$. In odd orthogonal dimension, the source group preserves $w_m$ and the target group preserves $w'_{2ni+m}$. Their residual factors are identified by $b\mapsto K_mbK_m^{-1}$. The identity ${}^tK_mw'_mK_m=-w_m$ holds over $k$, since $p$ is odd. Let $C_m:=J_0^{-1}K_m$ be as in (\ref{iota}). For a representation $\sigma$ of $G$, put $b^\iota:=C_mbC_m^{-1}$ and $\sigma^\iota(b):=\sigma(b^\iota)$. The entries of $J_0$ and $K_m$ are $0$, $1$, and $-1$. These matrices therefore give the same residual identification and stabilizer relation over $k$.

Let $\tau$ be an irreducible cuspidal representation of $\GL_n(k)$. For $c\geq1$, let $\Delta(\tau,c)$ be the finite Speh representation of $\GL_{nc}(k)$. It is the constituent of smallest degree in the parabolic induction of $c$ copies of $\tau$. Its partition parameter is $(1^c)$ in the convention of Section~2.4 of \cite{CZ26}. Put $\Delta(\tau,0):=\one$. Define
\[
  \rho_i:=\Ind_{Q_N}^{\bfS}\Delta(\tau,m+i),\qquad J_i:=J_{U,\psi_U}(\rho_i).
\]
The action on $J_i$ is through $t$. For a representation $\sigma$ of $G$, put
\begin{equation}\label{rhoi}
I_i(\sigma):=\Ind_{Q_{ni}^{H_i}}^{H_i} \bigl(\Delta(\tau,i)\boxtimes\sigma^\iota\bigr).
\end{equation}
The inducing representations in these formulas are inflated from the indicated Levi subgroups. For $i=0$, formula~(\ref{rhoi}) means $I_0(\sigma)\simeq\sigma^\iota$ under the fixed identification $H_0\simeq G$.

\subsection{The identity}

\begin{theorem}\label{finitecase}
Assume $p>2n(m+i)$. For every representation $\sigma$ of $G$, there is an isomorphism of $H_i$-representations
\begin{equation}\label{Ji}
  \Hom_G(\sigma^\vee,J_i)\simeq I_i(\sigma).
\end{equation}
The action of $h\in H_i$ on the left is $(h\cdot T)(v):=h\cdot T(v)$.
\end{theorem}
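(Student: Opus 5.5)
Over $k$ we run the orbit analysis of Section~\ref{orbits} again, now for finite groups. By Mackey theory, $\rho_i$ restricted to $U\rtimes t(G\times H_i)$ decomposes along the double cosets $Q_N\backslash\bfS/(U\rtimes t(G\times H_i))$, using the same representatives $\bar\eta$ and $w_e$. Applying the exact functor $J_{U,\psi_U}$ gives $J_i$ as a direct sum of the terms $\Ind_{S_e}^{G\times H_i}W_e$, together with the terms carrying $r>0$. Over a finite field the filtration splits, so we get a genuine direct sum. It is enough to show that every term except the open one vanishes, and then to identify the open term.

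\textbf{Vanishing.} This is the step I expect to be the main obstacle. Over $F$ the vanishing came from the wavefront bound (\ref{WFn}) and the criterion of \cite{GGS17}. Over $k$ we need a replacement. It should say that a twisted Jacquet module of $\Delta(\tau,m+i)$ attached to a nilpotent $N_\chi$ with $N_\chi^n\ne0$ is zero, and that neutral and degenerate models of $(n^m)$ agree. This is the finite field vanishing criterion of Appendix~\ref{comparison}. The hypothesis $p>2n(m+i)$ is there so that exponential maps and $\mathfrak{sl}_2$-gradings behave as in characteristic zero; this allows Kawanaka's theory of generalized Gelfand--Graev representations and the unipotent support of $\Delta(\tau,m+i)$ (partition $(n^{m+i})$, cf.\ \cite{CZ26}) to take the place of \cite{GGS17}. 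With this criterion the arguments of Lemma~\ref{intersection}, Proposition~\ref{rzero} and Lemma~\ref{D2D3} carry over word for word, so only the terms with $r=0$ and $e\ge0$ survive. For $e>0$ we argue as in Corollary~\ref{taucusp}. Cuspidality of $\tau$ means that every nonzero ordinary Jacquet module of $\Delta(\tau,m+i)$ has Levi blocks of dimension divisible by $n$. By (\ref{transitivity}) the first block has dimension $ni+e$ with $1\le e\le d<n$, so each boundary coefficient is zero. This vanishing holds before any tensoring with $\sigma$.

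\textbf{Open orbit.} We then have $J_i\simeq\Ind_{S_0}^{G\times H_i}\eta$. Here $\eta$ is obtained from Propositions~\ref{openSpeh} and \ref{stab} over $k$: the group $\GL_{ni}$ acts by $\Delta(\tau,i)$, the unipotent radical acts trivially, and $G$ acts trivially. This uses the finite-field version of Lemma~\ref{derivative}, in which the iterated derivative of $\Delta(\tau,m+i)$ is $\Delta(\tau,i)$ and the model for $(n^m)$ is one dimensional. The stabilizer is $S_0=\{(b^\iota,m(a,b)u)\}$. Inducing in stages through $G\times Q_{ni}^{H_i}$ gives $J_i\simeq\Ind_{G\times Q_{ni}}^{G\times H_i}\bigl(\bbC[G]\otimes\Delta(\tau,i)\bigr)$. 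In this expression $G\times G$ acts on $\bbC[G]$ by left and right translation, with the second factor twisted by $\iota$. Peter--Weyl gives $\bbC[G]\simeq\bigoplus_{\sigma'}\sigma'^\vee\boxtimes\sigma'^\iota$. Therefore
\[
J_i\simeq\bigoplus_{\sigma'\in\operatorname{Irr}(G)}\sigma'^\vee\boxtimes I_i(\sigma'),
\]
and applying $\Hom_G(\sigma^\vee,-)$, which is additive in $\sigma$, yields (\ref{Ji}) as $H_i$-modules.
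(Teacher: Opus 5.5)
Your outline is correct and follows the paper's proof. The shared steps are: a finite Mackey decomposition into orbit terms; Lemma~\ref{compare} in place of the wavefront criterion, to remove the terms with $r>0$ and to make the auxiliary row groups act trivially; cuspidality of $\tau$ through (\ref{HC}), which kills every term with $e>0$ because $n\nmid ni+e$; and an identification of the open orbit term as $\Ind_{S_0}^{G\times H_i}\eta_i$. Your Peter--Weyl ending is equivalent to the paper's. The paper reruns Proposition~\ref{integral} with sums to get (\ref{coinv}) and passes to $\Hom_G(\sigma^\vee,J_i)$ by averaging. Yours produces the decomposition (\ref{decomposition}) first and reads off (\ref{Ji}) from it.

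\textbf{Open orbit coefficient.} This is the step whose justification you should change. You appeal to a finite field version of Lemma~\ref{derivative}, including agreement of neutral and degenerate models for $(n^m)$, and you attribute this to the appendix. The appendix does not contain it. It proves only the vanishing statement of Lemma~\ref{compare}, and it obtains (\ref{bound}) from a Kostka--Foulkes multiplicity formula, not from unipotent support. No derivative theory or model comparison over $k$ is needed. The paper proceeds in three steps:
\begin{enumerate}
\item It adjoins the block $\Hom(M_1,W_0)$ using Lemma~\ref{compare}.
\item It applies (\ref{HC}) to the composition $(ni,nm)$, which gives the single constituent $\Delta(\tau,i)\boxtimes\Delta(\tau,m)$.
\item It uses the rectangular model (\ref{rect}), on which the diagonal $\GL_m$ acts by $\omega_\tau\circ\det$, hence trivially on $G$.
\end{enumerate}
The result is (\ref{open0}). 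Since the ordinary Jacquet module has only one constituent, your own one-dimensionality input together with (\ref{HC}) already suffices.

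\textbf{Two points you take for granted.} The paper checks both of these.
\begin{enumerate}
\item The $G\times H_i$-orbits over $k$ must still be indexed by $e$. This uses Witt extension over $k$, the existence of an anti-isometric graph between the residual spaces, and the sign argument of Proposition~\ref{evenSO} at the $\SO_{2d}$ endpoint.
\item (\ref{transitivity}) must hold over $k$; this is (\ref{finiteJe}). It needs the root exchanges redone with finite Fourier sums, and the triviality of $D_0$, $D_2$ and $R$ via Lemma~\ref{compare}.
\end{enumerate}
For the divisibility argument alone, the triviality of $D_0$ (Lemma~\ref{endpoint} over $k$) is enough. It places the whole $L_0$-row in the group with trivial character. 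The coefficient then factors through the ordinary Jacquet module for the composition $(ni+e,N-ni-e)$, which vanishes by (\ref{HC}).
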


Finite dimensionality identifies $\Hom_G(\sigma^\vee,J_i)$ with $(J_i\otimes\sigma)^G$. Averaging over $G$ identifies the latter with $(J_i\otimes\sigma)_G$. Thus the theorem is equivalent to
\begin{equation}\label{coinv}
  (J_i\otimes\sigma)_G\simeq I_i(\sigma).
\end{equation}
It also gives, naturally in every $H_i$-module $\Pi$,
\[
  \Hom_{G\times H_i}(J_i,\sigma^\vee\boxtimes\Pi) \simeq\Hom_{H_i}(I_i(\sigma),\Pi)
\]
by Lemma~\ref{homlemma}.

We use the following two Jacquet module formulas for the finite Speh representation. Write $J_V:=J_{V,1}$. Let $a_1,\ldots,a_r$ be positive integers with $a_1+\cdots+a_r=nc$. If every $a_j$ is divisible by $n$, put $c_j:=a_j/n$. Then
\begin{equation}\label{HC}
J_{V_{a_1,\ldots,a_r}}\Delta(\tau,c)\simeq
  \begin{cases}
    \Delta(\tau,c_1)\boxtimes\cdots\boxtimes\Delta(\tau,c_r), &a_j=nc_j\text{ for all }j,\\
    0,    &n\nmid a_j\text{ for some }j.
  \end{cases}
\end{equation}
Cuspidality makes this Jacquet module zero unless every $a_j$ is divisible by $n$. When every $a_j$ is divisible by $n$, Theorems~2.2 and~2.3 of \cite{AT17}, together with Frobenius reciprocity, identify its multiplicities with the Littlewood--Richardson coefficients for the column $(1^c)$. Only the columns $(1^{c_1}),\ldots,(1^{c_r})$ occur, with multiplicity one.

For $a,r\geq1$ and $v\in V_{a^r}$, define $\psi_{a,r}(v):=\psi^{-1}\left(\sum_{j=1}^{r-1}\tr v_{j,j+1}\right)$. Let $\omega_\tau$ be the central character of $\tau$. The rectangular model in Theorem~2.1 and Section~2.4 of \cite{CZ26} gives
\begin{equation}\label{rect}
J_{V_{a^n},\psi_{a,n}}\Delta(\tau,a) \simeq\omega_\tau\circ\det
\end{equation}
as a representation of the diagonal $\GL_a(k)$. For $i>0$, extend $\psi_{m,n}$ to $V_{ni,m^n}$ by making the row corresponding to the block of size $ni$ trivial. For $i=0$, the character remains $\psi_{m,n}$ on $V_{m^n}$. For $i>0$, we first take the ordinary Jacquet module for the blocks $(ni,nm)$. Formula~(\ref{HC}) gives $\Delta(\tau,i)\boxtimes\Delta(\tau,m)$. Taking the rectangular Jacquet module in the second factor gives the formula below. For $i=0$, it follows directly from (\ref{rect}).
\begin{equation}\label{open0}
J_{V_{ni,m^n},\psi_{m,n}}\Delta(\tau,m+i) \simeq\Delta(\tau,i)\boxtimes\one_G.
\end{equation}
The diagonal $G$ acts trivially because $\det g=1$ for $g\in G$.

\begin{proof}[Proof of Theorem~\ref{finitecase}]
The finite Mackey decomposition for $Q_N\backslash\bfS/(t(G\times H_i)U)$ gives a direct sum of spaces of functions supported on double cosets. Taking $J_{U,\psi_U}$ preserves this direct sum. The matrix reductions in Lemma~\ref{intersection} are valid over $k$. Witt extension holds over $k$; see Theorem~7.4 of \cite{T92}. In the symplectic case, nondegenerate alternating spaces of a fixed dimension are isometric over $k$. The $G\times H_i$-orbits on the central maximal isotropic subspaces are indexed by $e=0,\ldots,d$.

For the orthogonal groups, let $W$ and $Y$ be the defining quadratic spaces of $G$ and $H_i$. For a maximal isotropic subspace $X$ of $W\perp Y$, put $E:=X\cap W$ and $F:=X\cap Y$. Take the orthogonal complements $E^\perp$ in $W$ and $F^\perp$ in $Y$. Put $e:=\dim E$ and $f:=\dim F$. The projections of $X$ into $W$ and $Y$ lie in $E^\perp$ and $F^\perp$. Their kernels are $F$ and $E$, respectively. Since $\dim X=m+ni$, this gives $m+ni-f\leq m-e$ and $m+ni-e\leq m+2ni-f$. Hence $f=ni+e$. Both projections are therefore onto $E^\perp$ and $F^\perp$, respectively. After quotienting by $E\oplus F$, both projections are isomorphisms. The isotropy of $X$ therefore identifies $X/(E\oplus F)$ with the graph of an anti-isometry from $E^\perp/E$ to $F^\perp/F$. Both quotient spaces have dimension $c:=m-2e$. For even $c$, their split forms are anti-isometric by $\diag(I_{c/2},-I_{c/2})$. For odd $c$, the matrix $K_c$ is an anti-isometry between their forms. Thus these graphs exist over $k$. Witt extension makes the product of the full orthogonal groups transitive for each $e$.

The determinant argument in the proof of Proposition~\ref{evenSO} applies over $k$, since $p$ is odd. In the chosen ambient orbit, it gives one $G\times H_i$-orbit when $c>0$ and two signed orbits when $c=0$. The latter case occurs only when $m=2d$ and $e=d$. In particular, the open orbit is unique in all three families.

The inducing representation is trivial on the rectangular unipotent subgroups used in Lemma~\ref{intersection}. The same calculation of the character gives the conditions $\kappa_j=0$ and $\kappa_\infty=0$. The other exclusions in Proposition~\ref{rzero} use a nilpotent matrix whose $n$-th power is nonzero. Lemma~\ref{compare} supplies the required finite field vanishing. Let $J_{i,e}$ denote the remaining contribution indexed by $e$. In the even orthogonal case, $J_{i,d}$ is replaced by the two terms $J_{i,d,+}$ and $J_{i,d,-}$. Propositions~\ref{stab} and \ref{fibre} give
\[
J_i\simeq
  \begin{cases}
    \displaystyle\bigoplus_{e=0}^d J_{i,e}, &G=\Sp_{2d}(k)\text{ or }\SO_{2d+1}^{\mathrm{split}}(k),\\[6pt]
    \displaystyle\bigoplus_{e=0}^{d-1}J_{i,e}\oplus J_{i,d,+}\oplus J_{i,d,-}, &G=\SO_{2d}^{\mathrm{split}}(k).
  \end{cases}
\]
The matrix calculations in these propositions hold over $k$ with the fixed residual Levi identifications. All modulus characters are trivial. The coefficient of $J_{i,e}$ is the twisted Jacquet module in (\ref{Ce}), with the group and character defined over $k$ as follows. Consider the decomposition $k^N\simeq W_0\oplus M_1\oplus\cdots\oplus M_n$, where $W_0:=k^{ni}$ and each $M_j:=k^m$ has its standard basis. For $v\in V_{ni+m,m^{n-1}}$, let $x_j:M_{j+1}\to M_j$ be its indicated block, for $1\leq j<n$. Put $a_e:=\diag(I_{m-e},0_e)$ and $\psi_e(v):=\psi^{-1}\left(\tr(a_ex_1)+ \sum_{j=2}^{n-1}\tr x_j\right)$. The character is trivial on the block with row $W_0$ and column $M_2$. The coefficient is $J_{V_{ni+m,m^{n-1}}(k),\psi_e}(\Delta(\tau,m+i))$, as in (\ref{Ce}). If this coefficient is zero, then $J_{i,e}=0$. At the even orthogonal endpoint, both signed terms use $J_{V_{ni+m,m^{n-1}}(k),\psi_d}(\Delta(\tau,m+i))$, as in Proposition~\ref{evenSO}.

Let $e>0$. Put $a:=m-e$, $b:=e$, and $a_0:=ni+e$. These replace the block dimensions $q$, $p$, and $d_0$, respectively, in Theorem~\ref{coarse}. The outer Jacquet functor below acts on the last two factors of $\GL_{a_0}(k)\times\GL_{na}(k)\times\GL_{(n-1)b}(k)$. The root exchanges in the proof of Theorem~\ref{coarse} give
\begin{equation}\label{finiteJe}
  \begin{aligned}
    &J_{V_{ni+m,m^{n-1}}(k),\psi_e}(\Delta(\tau,m+i))\\
    &\qquad\simeq J_{V_{a^n}\times V_{b^{n-1}},\,\psi_{a,n}\boxtimes\psi_{b,n-1}} \left(J_{V_{a_0,na,(n-1)b}}\Delta(\tau,m+i)\right).
  \end{aligned}
\end{equation}
Over $k$, each root exchange pairs two rectangular matrix spaces $\mathcal X$ and $\mathcal Y$ by $(X,Y)\mapsto\psi(\tr(XY))$. The trace pairing is nondegenerate, so
\[
  \sum_{Y\in\mathcal Y}\psi(\tr(XY))
  =\begin{cases}|\mathcal Y|,&X=0,\\0,&X\ne0.\end{cases}
\]
The two finite summation operators in opposite directions therefore compose to a nonzero scalar multiple of the identity. They give the required exchange isomorphism.

For $D_0$ and $D_2$, the matrix calculations in Lemma~\ref{endpoint} and the $D_2$ part of Lemma~\ref{D2D3} apply, respectively. Their nontrivial characters give nilpotent matrices with nonzero $n$-th power. Lemma~\ref{compare} gives the required vanishing over $k$. Finite character decomposition therefore makes $D_0$ and $D_2$ act trivially.

Finally, use the groups ${}^1V$, ${}^2V$, and $R$ in the proof of Theorem~\ref{coarse}, over $k$ and with block dimensions $a_0,a,b$. Before taking the ${}^2V$-Jacquet module, adjoin the whole row $R$. An extending nontrivial character is represented by $0\ne A\in\Mat_{(n-1)b\times a}(k)$ whose last $b$ rows are zero. The enlarged group is $V_{a_0,a^n,(n-1)b}$. The character is supported on adjacent blocks. Let $N_A$ be the nilpotent matrix representing this character. Its adjacent identity blocks give $N_A^n\ne0$. Lemma~\ref{compare} makes every such Jacquet module zero. Finite character decomposition makes $R$ act trivially before taking the ${}^2V$-Jacquet module. Every change of variables is a bijection of finite matrix spaces. The scalar factors from summation are nonzero. The resulting block group and character are those in (\ref{transitivity}), with the indicated block dimensions. This proves (\ref{finiteJe}).

If $J_{V_{ni+m,m^{n-1}}(k),\psi_e}(\Delta(\tau,m+i))\ne0$, formula~(\ref{HC}) forces $n\mid a_0$. Since $a_0=ni+e$, it follows that $n\mid e$. No integer $e$ with $1\leq e\leq d$ has this property because $n>d$. The same argument makes both signed terms zero at the even orthogonal endpoint. Thus every contribution outside the open orbit vanishes as a $G\times H_i$-representation. Hence $J_i=J_{i,0}$.

For the open contribution, put $Q:=Q_{ni}^{H_i}$. Let $U_Q$ be its unipotent radical. For $a\in\GL_{ni}(k)$ and $b\in G$, write $m_Q(a,b):=\diag(a,K_mbK_m^{-1},a^*)$. The star operation is that of Section~\ref{matrices}. The stabilizer of the open representative is
\[
S_0=\{(b^\iota,m_Q(a,b)u): a\in\GL_{ni}(k),\ b\in G,\ u\in U_Q\}.
\]
In the coefficient $J_{V_{ni+m,m^{n-1}}(k),\psi_0}(\Delta(\tau,m+i))$, the first block is $W_0\oplus M_1$, with $\dim W_0=ni$ and $\dim M_1=m$. We adjoin the missing block $\Hom(M_1,W_0)$ as in Proposition~\ref{openSpeh}. A nonzero character on that block is represented by a nonzero map $B:W_0\to M_1$. Let $N_B\in\mathfrak{gl}_N(k)$ have block $B:W_0\to M_1$ and identity blocks $M_j\to M_{j+1}$ for $1\leq j<n$. All other blocks are zero. The restriction $N_B^n|_{W_0}:W_0\to M_n$ has matrix $B$ in the chosen bases. Hence $N_B^n\ne0$. Lemma~\ref{compare} excludes every such character. Thus the missing block acts trivially. For $i=0$ this block is absent.

Formula~(\ref{open0}) identifies the resulting coefficient with $\Delta(\tau,i)$. The image of $U_Q$ lies in the unipotent group just adjoined, with trivial character. Define the representation $\eta_i$ of $S_0$ by $\eta_i(b^\iota,m_Q(a,b)u):=\Delta(\tau,i)(a)$. Consequently, $J_{i,0}\simeq\Ind_{S_0}^{G\times H_i}\eta_i$. The proof of Proposition~\ref{integral}, with integration replaced by summation and all modulus characters equal to one, gives (\ref{coinv}). The averaging identification above then gives (\ref{Ji}).
\end{proof}

\subsection{Irreducible constituents}

Let $\operatorname{Irr}(K)$ denote the isomorphism classes of irreducible complex representations of a finite group $K$. Proposition~\ref{constituents} describes the irreducible constituents of (\ref{rhoi}) for every odd $q$. The characteristic bound in Theorem~\ref{finitecase} is not needed.

\begin{proposition}\label{constituents}
Let $\sigma,\sigma'\in\operatorname{Irr}(G)$. If $\tau\not\simeq\tau^\vee$, then $I_i(\sigma)$ is irreducible. If $\tau\simeq\tau^\vee$, then $I_i(\sigma)$ is the direct sum of exactly $i+1$ pairwise nonisomorphic irreducible representations. In either case, $I_i(\sigma)$ and $I_i(\sigma')$ have no common irreducible constituent when $\sigma\not\simeq\sigma'$.
\end{proposition}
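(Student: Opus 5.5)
The plan is to compute the endomorphism algebra of $I_i(\sigma)$ and the intertwiners $\Hom_{H_i}(I_i(\sigma),I_i(\sigma'))$ by the Mackey formula for the double cosets $Q\backslash H_i/Q$, where $Q:=Q_{ni}^{H_i}$. For $i=0$ everything is trivial: $I_0(\sigma)=\sigma^\iota$ is irreducible, $i+1=1$, and distinct $\sigma$ give distinct $\sigma^\iota$. So assume $i\ge1$ and put $\pi_0:=\Delta(\tau,i)\boxtimes\sigma^\iota$, a representation of the Levi $M:=\GL_{ni}\times G$. Harish-Chandra philosophy enters through cuspidal supports. The cuspidal support of $\Delta(\tau,i)$ consists only of $\tau$'s, with the partition parameter $(1^i)$. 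Since $n>d$, no $\GL_n$ block can fit inside the Levi of a proper parabolic of $G$ in a way that would cancel this part of the support.

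\textbf{Step 1 (Mackey).} The double cosets $Q\backslash H_i/Q$ are indexed by Weyl elements $w$. For each $w$, the contribution is $\Hom_{M\cap{}^wM}$ between the ordinary Jacquet modules $r_{M\cap {}^wQ}(\pi_0)$ and $r_{M\cap{}^{w^{-1}}Q}(\pi_0')^w$. By (\ref{HC}), the Jacquet module of $\Delta(\tau,i)$ along $\GL_{a}\times\GL_{ni-a}$ vanishes unless $n\mid a$. The relevant $w$ therefore move some number $j$ of the $\GL_n$-blocks $(0\le j\le i)$ into the opposite position (inversion $g\mapsto g^*$). Moving a $\GL_n$ block into $G$ would require a nonzero $r_{U_e^G}(\sigma)$ whose $\GL_e$ part contains $\tau$ in its cuspidal support. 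This can happen only if $n\le e\le d$, which is excluded.

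\textbf{Step 2.} Let $w_j$ be the element flipping the last $j$ blocks. Its contribution is
\[
\Hom_{\GL_{n(i-j)}\times\GL_{nj}\times G}\bigl(\Delta(\tau,i-j)\boxtimes\Delta(\tau,j)\boxtimes\sigma^\iota,\ \Delta(\tau,i-j)\boxtimes\Delta(\tau,j)^{*}\boxtimes\sigma'^\iota\bigr),
\]
where $\Delta(\tau,j)^*\simeq\Delta(\tau^\vee,j)$ comes from $a\mapsto a^*$. This is nonzero only if $\sigma\simeq\sigma'$ and, for $j>0$, $\tau\simeq\tau^\vee$. In that case the contribution is one dimensional by (\ref{HC}), multiplicity one. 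This already gives the disjointness statement: $\Hom(I_i(\sigma),I_i(\sigma'))=0$ for $\sigma\not\simeq\sigma'$. The same computation applies with $\sigma,\sigma'$ replaced by constituents, so no common constituents occur at all. It also gives $\dim\operatorname{End}(I_i(\sigma))=1$ when $\tau\not\simeq\tau^\vee$, hence irreducibility, and $\dim\operatorname{End}=i+1$ when $\tau\simeq\tau^\vee$.

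\textbf{Step 3 (main obstacle).} When $\tau\simeq\tau^\vee$, dimension $i+1$ is compatible both with $i+1$ distinct constituents and with multiplicities. I must show that the endomorphism algebra is commutative. The first approach I would try is to reduce by induction in stages to the Hecke algebra of $\Ind_{P}^{\GL_{ni}\times\ldots}$ of $\tau^{\otimes i}$ with the type $B/C$ Weyl group. That Hecke algebra is Howlett–Lehrer's $\calH(B_i;q^{n},\,\cdot)$ with parameters attached to the reflection of $\tau$ at the end. The endomorphism algebra of $I_i(\sigma)$ is then the idempotent-cut subalgebra $e_{\mathrm{sgn}}\calH e_{\mathrm{sgn}}$ corresponding to the Speh (sign-type) constituent of the $S_i$ part. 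Here $\sigma$ contributes only a scalar, since $n>d$ decouples it. This spherical-type subalgebra of a type $B$ Hecke algebra with respect to a parabolic $S_i$ is commutative of dimension $i+1$, indexed by $|S_i\backslash W(B_i)/S_i|=i+1$. A check would be that it is the Hecke algebra of the Gelfand pair $(W(B_i),S_i)$, which is commutative because the double cosets are self-inverse. If this identification proves delicate, the fallback is Gelfand's trick: exhibit an anti-involution (transpose composed with $w_0$) fixing each $Q$-double coset. That gives commutativity and therefore multiplicity freeness with exactly $i+1$ pairwise nonisomorphic summands.
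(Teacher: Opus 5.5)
Your argument is correct in outline, but it is organized differently from the paper's. The paper does no Mackey computation in $H_i$. It places $\sigma^\iota$ in a Harish-Chandra series $(L_G,\theta)$ and computes $W_M(L,\eta)\simeq\mathfrak S_i\times W_0$, together with $W_{H_i}(L,\eta)\simeq\mathfrak S_i\times W_0$ or $B_i\times W_0$. It then uses the Howlett--Lehrer comparison theorem (Theorem~3.2.7 of Geck--Malle) to turn every multiplicity in $I_i(\sigma)$ into a multiplicity in $(\Ind_{\mathfrak S_i}^{B_i}\nu_i)\boxtimes E_\sigma$. Fourier analysis splits the permutation representation of $B_i$ on $A_i$ into the $i+1$ pieces indexed by $|T|$. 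Disjointness comes from distinct $E_\sigma$ inside a series and from uniqueness of cuspidal support across series.

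Your Steps~1--2 instead use only the Mackey formula, (\ref{HC}) and $n>d$. From these you get irreducibility for $\tau\not\simeq\tau^\vee$, the vanishing of $\Hom_{H_i}(I_i(\sigma),I_i(\sigma'))$ for $\sigma\not\simeq\sigma'$, and $\dim\operatorname{End}_{H_i}(I_i(\sigma))=i+1$ in the selfdual case. This is more elementary and avoids the cuspidal-pair bookkeeping for these assertions. By semisimplicity, the vanishing of $\Hom$ already is the disjointness, so your remark about passing to constituents is unnecessary.

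For multiplicity freeness you need the same input as the paper. The endomorphism algebra $\mathcal A$ of the Harish-Chandra induction of $\eta$ is an Iwahori--Hecke algebra of $B_i\times W_0$, and it contains the algebra for $M$ as a parabolic subalgebra. The factor $W_0$ splits off because $d<n$, and in the orthogonal case because block reversals have determinant $(-1)^n=1$ when $\tau$ is selfdual. Let $e_\nu$ be the one-dimensional idempotent of $\Delta(\tau,i)$ in the $\mathfrak S_i$-subalgebra. Since the $W_0$-idempotent of $\sigma^\iota$ is primitive, $\operatorname{End}_{H_i}(I_i(\sigma))\simeq e_\nu\mathcal A(B_i)e_\nu$.

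Two points should be tightened. First, prove commutativity of $e_\nu\mathcal A(B_i)e_\nu$ inside the Hecke algebra, not only for the Gelfand pair $(B_i,\mathfrak S_i)$ at $q=1$.
\begin{itemize}
\item The anti-involution $T_w\mapsto T_{w^{-1}}$ fixes $e_\nu$.
\item The elements $e_\nu T_de_\nu$ span $e_\nu\mathcal A(B_i)e_\nu$, where $d$ runs over the minimal length representatives of $\mathfrak S_i\backslash B_i/\mathfrak S_i$.
\item Each such $d$ is an involution, because these double cosets are self-inverse.
\end{itemize}
Hence the anti-involution is the identity on $e_\nu\mathcal A(B_i)e_\nu$, which is therefore commutative. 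Tits deformation, which is what the comparison theorem packages, would also work.

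Your group-level fallback does not work as stated, since $\Delta(\tau,i)\boxtimes\sigma^\iota$ is not a character. An anti-involution preserving each $Q$-double coset only rescales each one-dimensional Mackey piece by $\pm1$. You would have to construct a compatible transpose on the endomorphisms of the inducing representation and control these signs.

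Second, consider even orthogonal groups. A Weyl element flipping an odd number $nj$ of coordinates lies in $\SO$ only after composing with a reflection on the space of $G$. Its Mackey term then pairs $\sigma^\iota$ with an outer conjugate of $(\sigma')^\iota$. This term still vanishes: $nj$ odd forces $n$ odd, hence $\tau\not\simeq\tau^\vee$, so the factor $\Hom(\Delta(\tau,j),\Delta(\tau^\vee,j))$ is zero.
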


\begin{proof}
The case $i=0$ follows from $I_0(\sigma)\simeq\sigma^\iota$. Assume $i>0$. Choose a cuspidal pair $(L_G,\theta)$ for the Harish--Chandra series containing $\sigma^\iota$. Here $L_G$ is a Levi subgroup of $G$, and $\theta$ is an irreducible cuspidal representation of $L_G$. Put $M:=\GL_{ni}(k)\times G$, $L:=\GL_n(k)^i\times L_G$, and $\eta:=\tau^{\boxtimes i}\boxtimes\theta$. For a cuspidal pair $(L',\eta')$ in a finite reductive group $K$, let $W_K(L',\eta')$ denote the stabilizer of $\eta'$ in the relative Weyl group of the corresponding algebraic Levi subgroup. Set $W_0:=W_G(L_G,\theta)$. Every general linear factor of $L_G$ has size at most $d<n$. A Weyl element normalizing $L$ therefore cannot exchange a new $\GL_n$-block with a general linear block of $L_G$. Permuting the new blocks preserves $\eta$. Reversing a new block replaces $\tau$ by $\tau^\vee$. In the symplectic case, this reversal can be represented on the new symplectic block and acts trivially on the space of $G$.

For the orthogonal groups, consider the determinant of a block reversal. If $\tau\simeq\tau^\vee$, then $n$ is even because $n>1$; see Section~3 of \cite{LS20}.

On a new hyperbolic block with dual bases, reversal is represented by $\bigl(\begin{smallmatrix}0&I_n\\I_n&0\end{smallmatrix}\bigr)$. Its determinant is $(-1)^n$. When $\tau$ is selfdual, this determinant is $1$ because $n$ is even. The reversal therefore belongs to the special orthogonal group and acts as the identity on the defining space of $G$. When $\tau$ is not selfdual, no reversal of a new block preserves $\eta$. Permutations of the new hyperbolic blocks also have determinant $1$. Consequently, the orthogonal groups give the same relative Weyl groups as the symplectic groups. Let $\mathfrak S_i$ be the symmetric group on $i$ letters. Put $A_i:=(\mathbb Z/2\mathbb Z)^i$ and $B_i:=A_i\rtimes\mathfrak S_i$, where $\mathfrak S_i$ permutes the factors of $A_i$. It follows that
\[
W_M(L,\eta)\simeq \mathfrak S_i\times W_0,\qquad W_{H_i}(L,\eta)\simeq
  \begin{cases}
    \mathfrak S_i\times W_0,&\tau\not\simeq\tau^\vee,\\
B_i\times W_0,&\tau\simeq\tau^\vee.
  \end{cases}
\]
We choose the relative Weyl group parameters compatibly with the product $M=\GL_{ni}(k)\times G$. Let $\nu_i\in\operatorname{Irr}(\mathfrak S_i)$ and $E_\sigma\in\operatorname{Irr}(W_0)$ correspond to $\Delta(\tau,i)$ and $\sigma^\iota$. For $E\in\operatorname{Irr}(W_{H_i}(L,\eta))$, let $\Pi_E$ be the corresponding irreducible representation of $H_i$. The comparison theorem, Theorem~3.2.7 of \cite{GM20}, gives
\[
  \begin{aligned}
    &\dim\Hom_{H_i}(\Pi_E,I_i(\sigma))\\
    &\qquad=\dim\Hom_{W_{H_i}(L,\eta)} \left(E,\Ind_{W_M(L,\eta)}^{W_{H_i}(L,\eta)} (\nu_i\boxtimes E_\sigma)\right).
  \end{aligned}
\]
Compatibility with induction through intermediate Levi subgroups allows us to use the chosen product parameters on $M$. The Speh representation occurs with multiplicity one in the induction of $i$ copies of $\tau$ by Section~2.4 of \cite{CZ26}. The comparison theorem therefore gives $\dim\nu_i=1$.

If $\tau\not\simeq\tau^\vee$, the two relative Weyl groups are equal. The induced parameter in the multiplicity formula is then the irreducible representation $\nu_i\boxtimes E_\sigma$. Thus $I_i(\sigma)$ is irreducible.

Suppose $\tau\simeq\tau^\vee$. The induced parameter is
\[
  \Ind_{\mathfrak S_i\times W_0}^{B_i\times W_0} (\nu_i\boxtimes E_\sigma) \simeq \left(\Ind_{\mathfrak S_i}^{B_i}\nu_i\right)\boxtimes E_\sigma.
\]
The representation $\Ind_{\mathfrak S_i}^{B_i}\one$ is the permutation representation on $A_i$. Fourier transformation gives distinct $A_i$-weight lines indexed by subsets $T\subseteq\{1,\ldots,i\}$. The group $\mathfrak S_i$ permutes these lines. For each $a=0,\ldots,i$, the sum of the lines with $|T|=a$ is irreducible. Indeed, an invariant subspace is a sum of $A_i$-weight lines, and $\mathfrak S_i$ is transitive on the subsets of size $a$. These subrepresentations have disjoint $A_i$-weights for different $a$. There are therefore exactly $i+1$ irreducible constituents, each with multiplicity one. Every one dimensional representation of $\mathfrak S_i$ extends to $B_i$ through the quotient $B_i\to\mathfrak S_i$. Tensoring by this extension gives the same conclusion for $\nu_i$.

Within one cuspidal series of $G$, distinct $\sigma$ have distinct parameters $E_\sigma$. Their constituents in $H_i$ are therefore disjoint.

Two cuspidal pairs of $G$ cannot become conjugate after adding the new $\GL_n$-blocks unless they were already conjugate in $G$. To see this, a conjugating Weyl element must preserve the collection of blocks of size $n$. In the orthogonal case, its action on the new hyperbolic blocks has determinant $1$, by the permutation and reversal calculation above. Its restriction to the remaining quadratic space therefore has determinant $1$. Thus its restriction to the remaining space belongs to the Weyl group of $G$ in every case. The uniqueness of cuspidal support now separates different series.
\end{proof}

Assume the characteristic bound in Theorem~\ref{finitecase}. For $\sigma\in\operatorname{Irr}(G)$, the multiplicity space of $\sigma^\vee$ in $J_i$ is $\Hom_G(\sigma^\vee,J_i)$. Theorem~\ref{finitecase} identifies this $H_i$-module with $I_i(\sigma)$. Semisimplicity and the evaluation maps therefore give
\begin{equation}\label{decomposition}
J_i\simeq\bigoplus_{\sigma\in\operatorname{Irr}(G)} \sigma^\vee\boxtimes I_i(\sigma).
\end{equation}
Proposition~\ref{constituents} shows that $J_i$ is multiplicity free as a $G\times H_i$-representation. For $\sigma\in\operatorname{Irr}(G)$ and $\pi\in\operatorname{Irr}(H_i)$,
\[
  \dim\Hom_{G\times H_i}(J_i,\sigma^\vee\boxtimes\pi) =
  \begin{cases}
    1,&\pi\text{ is a constituent of }I_i(\sigma),\\
    0,&\text{otherwise}.
  \end{cases}
\]
Each irreducible constituent on the $H_i$ side determines $\sigma$ uniquely. If $i=0$, formula~(\ref{decomposition}) becomes $J_0\simeq\bigoplus_{\sigma\in\operatorname{Irr}(G)} \sigma^\vee\boxtimes\sigma^\iota$. Formula~(\ref{decomposition}) requires $p>2n(m+i)$. For a fixed field $k$, the proof therefore gives this formula only for the integers $i$ satisfying this bound.

The finite field results include $G=\SO_2^{\mathrm{split}}(k)$. Here $d=1$ and $n>1$, so the divisibility argument eliminates both endpoint terms. Thus (\ref{Ji}) also holds for $\SO_2$.

\appendix
\section{A finite field vanishing criterion}\label{comparison}

We retain $k=\mathbf F_q$, its characteristic $p$, and the cuspidal representation $\tau$ of $\GL_n(k)$ from Section~\ref{Fq}. Fix $c\geq1$ and put $D:=nc$. We order partitions of $D$ by dominance. For a partition $\lambda$ of $D$, let $\Gamma_\lambda$ denote the polarized generalized Gelfand--Graev representation of $\GL_D(k)$ attached to the nilpotent partition $\lambda$, with additive character $\psi^{-1}$. We use the normalization of Section~3.1 of \cite{AT17}.

The finite Speh representation satisfies
\begin{equation}\label{bound}
  \Hom_{\GL_D(k)}(\Gamma_\lambda,\Delta(\tau,c))=0 \qquad\text{unless }\lambda\leq(n^c).
\end{equation}
To prove (\ref{bound}), let $e_c$, $p_n$, $m_\mu$, and $s_\alpha$ be the elementary, power-sum, monomial, and Schur symmetric functions. Let $K_{\alpha\lambda}(q)$ be the Kostka--Foulkes polynomial. The brackets in $e_c[p_n]$ mean substitution of $x_j^n$ for $x_j$. Let $a_\alpha$ be the coefficient of $s_\alpha$ in the Schur expansion of $e_c[p_n]$. Then $e_c[p_n]=\sum_{\alpha\vdash D}a_\alpha s_\alpha$. The formula in Proposition~5.7 of \cite{AT17} and its proof gives
\[
  \dim\Hom_{\GL_D(k)}(\Gamma_\lambda,\Delta(\tau,c)) =\sum_{\alpha\vdash D}a_\alpha K_{\alpha\lambda}(q).
\]
By the definition of $e_c[p_n]$, $e_c[p_n]=\sum_{j_1<\cdots<j_c}x_{j_1}^n\cdots x_{j_c}^n =m_{(n^c)}$. The expansion of $s_\alpha$ in monomial symmetric functions is triangular in dominance order, with diagonal coefficient $1$. Its inverse is triangular in the same order. Consequently, $a_\alpha=0$ unless $\alpha\leq(n^c)$. Also $K_{\alpha\lambda}(q)=0$ unless $\lambda\leq\alpha$. A nonzero summand in the multiplicity formula therefore requires $\lambda\leq\alpha\leq(n^c)$. This proves (\ref{bound}).

The matrix calculations in Section~\ref{Fq} use characters of unipotent subgroups defined by block decompositions. Lemma~\ref{compare} deduces the vanishing of these twisted Jacquet modules from (\ref{bound}). Put $\mathfrak g:=\mathfrak{gl}_D(k)$. A rational grading means an assignment of rational labels $s_1,\ldots,s_D$ to a basis of $k^D$. The matrix entry in row $a$ and column $b$ has degree $s_a-s_b$. The labels are not reduced modulo $p$. Let $\mathfrak g_j^s$ be the space of matrices of degree $j$. We write $\mathfrak g_{\ge a}^s:=\bigoplus_{j\ge a}\mathfrak g_j^s$, with analogous notation for strict inequalities.

\begin{lemma}\label{compare}
Assume $p>2D$. Let $f\in\mathfrak{gl}_D(k)$ have degree $-2$ for a rational grading $s$. Put $\mathfrak u_s:=\mathfrak g_{\geq1}^s$ and $\omega_f(X,Y):=\tr(f[X,Y])$. Choose a maximal isotropic subspace $\mathfrak p_s$ of $\mathfrak u_s$ for $\omega_f$. Let $\exp$ denote the truncated matrix exponential $\exp X=\sum_{r=0}^{D-1}X^r/r!$. Set $P_s:=\exp(\mathfrak p_s)$ and $\chi_f(\exp X):=\psi^{-1}(\tr(fX))$. If $f^n\ne0$, then
\[
J_{P_s,\chi_f}\Delta(\tau,c)=0.
\]
\end{lemma}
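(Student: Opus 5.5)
We reduce to the bound (\ref{bound}) through a finite field analogue of the comparison between degenerate and neutral Whittaker models of \cite{GGS17}. Let $\lambda$ be the Jordan partition of $f$. Since $f^n\ne0$, we have $\lambda_1>n$, so $\lambda\not\le(n^c)$. By (\ref{bound}), $\Hom(\Gamma_\lambda,\Delta(\tau,c))=0$. The generalized Gelfand--Graev representation $\Gamma_\lambda$ is $\Ind_{P_{h}}^{\GL_D(k)}\chi_{f'}$ for a neutral pair $(h,f')$, where $h$ is the Jacobson--Morozov semisimple element of an $\mathfrak{sl}_2$-triple through a conjugate $f'$ of $f$. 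Here we use $p>2D$: the Jacobson--Morozov theorem holds over $k$, and the truncated exponential and logarithm are mutually inverse group isomorphisms on the relevant nilpotent subalgebras. The Lie algebras involved have nilpotency class less than $D<p$, so Lazard/Baker--Campbell--Hausdorff applies. By Frobenius reciprocity, $J_{P_h,\chi_{f'}}\Delta(\tau,c)=0$. The rest of the argument transports this vanishing from the neutral grading $h$ to the given grading $s$.

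The key step is a finite field version of Theorem~A of \cite{GGS17}. For any rational grading $s$ with $f\in\mathfrak g^s_{-2}$, and any maximal isotropic $\mathfrak p_s\subset\mathfrak g_1^s$ (extended by $\mathfrak g^s_{>1}$), $J_{P_s,\chi_f}(\pi)$ embeds in, or is a quotient of, $J_{P_h,\chi_f}(\pi)$ for a neutral $h$ commuting with $s$ after conjugation. We construct the path $s_t=h+t(s-h)$, $t\in[0,1]$. We may conjugate $f$ so that a neutral $h$ for $f$ commutes with $s$, using a Jacobson--Morozov triple inside the centralizer torus of $s$. Along the path, the grading $\mathfrak g^{s_t}_{\ge1}$ changes only at finitely many critical values of $t$. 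At each critical value we perform a root exchange, exactly as in the proofs of Lemma~\ref{D2D3} and Theorem~\ref{finitecase}. Two subspaces $\mathcal X,\mathcal Y$ are paired nondegenerately by $\omega_f$, and the finite sum $\sum_{Y}\psi(\tr XY)$ identifies the two Jacquet quotients up to a nonzero scalar. Between critical values, changing the maximal isotropic subspace does not change the Jacquet module up to isomorphism. This is the finite Heisenberg/Stone--von Neumann argument: $\exp(\mathfrak u_s)/\ker$ is a Heisenberg group with center character $\chi_f$ nontrivial, having a unique irreducible representation with that central character. Consequently, $J_{P_s,\chi_f}$ for any two maximal isotropic choices are isomorphic via an intertwiner, with multiplicity counted by $J_{\exp\mathfrak g^s_{\ge 2}}$ isotypic components.

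Chaining these identifications shows that $J_{P_s,\chi_f}\Delta(\tau,c)$ is isomorphic to a subquotient of, and in fact injects into, a multiple of $J_{P_h,\chi_{f'}}\Delta(\tau,c)$. The latter vanishes, which gives the claim. The main obstacle is the deformation step. In \cite{GGS17} one gets only a surjection in the archimedean/$p$-adic setting. Over $k$, exactness of $J_{V,\chi}$ (the idempotent $e_\chi$) makes injections and surjections interchangeable, but one must still check that at each critical $t$ the two exchanged root spaces are exactly dual under $\omega_f$. We will verify this using $f\in\mathfrak g^{s_t}_{-2}$ for all $t$, so that $\omega_f$ pairs $\mathfrak g^{s_t}_{a}$ with $\mathfrak g^{s_t}_{2-a}$, and the spaces crossing weight $1$ come in dual pairs. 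The hypothesis $p>2D$ guarantees that the exponential is a homomorphism on these two-step groups and that the characters $\chi_f$ are well defined.
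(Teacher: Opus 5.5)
Your outline agrees with the paper at the start and at the end: the bound (\ref{bound}) for $\Gamma_\lambda$, a neutral $h$ commuting with $s$, the path $s_t=h+t(s-h)$, and Frobenius reciprocity at the end. The deformation step in between, however, fails as you describe it. You assert that at each critical value the spaces crossing weight one come in dual pairs under $\omega_f$, so that every step is a root-exchange isomorphism. That is false. On $\mathfrak u_t=\mathfrak g_{\ge1}^{s_t}$ the radical of $\omega_f$ is $\mathfrak g_{>1}^{s_t}+(\mathfrak g^f\cap\mathfrak g_1^{s_t})$, and the centralizer part can be nonzero at critical $t>0$. A centralizer vector entering $\mathfrak u_t$ has no partner, so adjoining it is an extra quotient, not an exchange.

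For a concrete example, take $D=3$, $fe_0=e_1$, $fe_1=fv_0=0$, and $s$-labels $0,-2,1$ on $e_0,e_1,v_0$. Then $\mathfrak g_1^s=kE_{v_0e_0}\subseteq\mathfrak g^f$, so $P_s=\exp\mathfrak u_s$ has order $q^3$. The neutral polarization for $\lambda=(2,1)$ has order $q^2$. The two induced representations therefore have different dimensions, and no chain of isomorphisms joins them.

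Your concluding sentence (``isomorphic to a subquotient of, and in fact injects into, a multiple of'') is thus not supported by the steps you list. Exactness of $J_{V,\chi}$ does not make the direction irrelevant either. Only a surjection $\Gamma_\lambda\twoheadrightarrow\Ind_{P_s}^{\GL_D(k)}\chi_f$ yields an injection after applying $\Hom(-,\Delta(\tau,c))$; a map the other way proves nothing.

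The missing idea is what fixes the direction of the non-invertible steps. Computing on pairs of Jordan chains shows that $\mathfrak g^f$ has only nonpositive $h$-degrees. Hence, for $t>0$, every vector of $\mathfrak g^f\cap\mathfrak g_1^{s_t}$ has positive $z$-degree, where $z=s-h$. So centralizer vectors only enter $\mathfrak u_t$ as $t$ increases and never leave it.

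Now let $\mathfrak l_t$ (respectively $\mathfrak r_t$) be the sum of three pieces:
\begin{itemize}
\item the radical of $\omega_f$ on $\mathfrak u_t$;
\item a fixed maximal isotropic $\mathfrak m\subseteq\mathfrak g_1^h\cap\mathfrak g_0^z$;
\item the negative (respectively positive) $z$-degree part of $\mathfrak u_t$.
\end{itemize}
Both are maximal isotropic. The finite Fourier argument gives $\Ind_{\exp\mathfrak l_t}\chi_f\simeq\Ind_{\exp\mathfrak r_t}\chi_f$; this is the correct form of your polarization-independence step.

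The degree fact gives $\mathfrak r_a\subseteq\mathfrak l_b$ for successive critical values $a<b$. Averaging over $\exp\mathfrak r_a\backslash\exp\mathfrak l_b$ then gives a surjection $\Ind_{\exp\mathfrak r_a}\chi_f\twoheadrightarrow\Ind_{\exp\mathfrak l_b}\chi_f$. Chaining from $t=0$, where the induced representation is $\Gamma_\lambda$, to $t=1$ gives the required surjection onto $\Ind_{P_s}\chi_f$. Without this sign control on $\mathfrak g^f$, a centralizer vector could leave $\mathfrak u_t$, the inclusion would reverse, and the resulting surjection would go in the useless direction.
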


\begin{proof}
Since $f$ has degree $-2$, the space $\mathfrak g_{>1}^s$ lies in the radical of $\omega_f$ on $\mathfrak u_s$. Every maximal isotropic subspace contains this radical. Hence $\mathfrak g_{>1}^s\subseteq\mathfrak p_s$. Since $[\mathfrak u_s,\mathfrak u_s]\subseteq\mathfrak g_{>1}^s$, $\mathfrak p_s$ is a Lie algebra. Its nilpotence class is at most $D-1$. Order the basis by decreasing $s$-labels. Then every matrix in $\mathfrak u_s$ is strictly upper triangular. Every product of $D$ such matrices is therefore zero. For $X,A\in\mathfrak u_s$, the exponential and logarithm are the finite polynomials $\exp X:=\sum_{r=0}^{D-1}\frac{X^r}{r!}$ and $\log(I+A):=\sum_{r=1}^{D-1}\frac{(-1)^{r+1}A^r}{r}$. Their denominators are invertible because $p>2D$. These polynomials are inverse to one another. For $X,Y\in\mathfrak p_s$, put $Z:=\log(\exp X\exp Y)$. The Baker--Campbell--Hausdorff formula gives $Z=X+Y+\tfrac12[X,Y]+R$, where $R\in\mathfrak g_{>2}^s$. Every term in $R$ is a product of at least three factors from $\mathfrak u_s$. Since $\mathfrak g_{\ge2}^s\subseteq\mathfrak p_s$, we have $Z\in\mathfrak p_s$. Together with $(\exp X)^{-1}=\exp(-X)$, this proves that $P_s$ is a subgroup. Isotropy gives $\tr(f[X,Y])=0$. The grading gives $\tr(fR)=0$. Thus $\tr(fZ)=\tr(fX)+\tr(fY)$, which proves $\chi_f(\exp X\exp Y)=\chi_f(\exp X)\chi_f(\exp Y)$.

Since $f$ lowers the grading, it is nilpotent. Let $\lambda$ be its Jordan partition. Choose a Jordan basis whose vectors are homogeneous for $s$. On a chain of length $a$, assign the labels $a-1,a-3,\ldots,1-a$. Let $h$ denote this grading. Put $z:=s-h$. The $z$-label is constant on each Jordan chain. Thus $f$ has $z$-degree zero. For rational $t\in[0,1]$, define $s_t:=h+tz$, $\mathfrak u_t:=\mathfrak g_{\geq1}^{s_t}$, $\mathfrak v_t:=\mathfrak g_{>1}^{s_t}$, and $\mathfrak w_t:=\mathfrak g_1^{s_t}$. Put $\mathfrak g^f:=\{X\in\mathfrak g:[X,f]=0\}$. The trace pairing gives
\[
R_t:=\operatorname{rad}(\omega_f|_{\mathfrak u_t}) =\mathfrak v_t+(\mathfrak g^f\cap\mathfrak w_t).
\]
To determine the $h$-degrees in $\mathfrak g^f$, consider chains $e_0,\ldots,e_{a-1}$ and $v_0,\ldots,v_{b-1}$, where $fe_j=e_{j+1}$ and $fv_j=v_{j+1}$ until the last vector. A map commuting with $f$ is determined by the image of $e_0$. Its homogeneous components have $e_0\mapsto v_r$ with $r\geq\max(0,b-a)$, because $f^a v_r=0$. Such a component has $h$-degree $(b-1-2r)-(a-1)=b-a-2r\leq0$. Thus $\mathfrak g^f$ has only nonpositive $h$-degrees. The radical formula at $t=0$ now shows that $\omega_f$ is nondegenerate on $\mathfrak g_1^h$. Since $f$ has $z$-degree zero, the form pairs opposite $z$-degrees. Its restriction to $\mathfrak g_1^h\cap\mathfrak g_0^z$ is therefore nondegenerate. Choose a maximal isotropic subspace $\mathfrak m$ for this restriction. Put $\mathfrak l_t:=\mathfrak m+ (\mathfrak u_t\cap\mathfrak g_{<0}^z)+R_t$ and $\mathfrak r_t:=\mathfrak m+ (\mathfrak u_t\cap\mathfrak g_{>0}^z)+R_t$. Modulo $R_t$, the form pairs every positive $z$-degree space nondegenerately with the opposite negative $z$-degree space. Its zero $z$-degree space is $\mathfrak g_1^h\cap\mathfrak g_0^z$. The choice of $\mathfrak m$ therefore makes $\mathfrak l_t$ and $\mathfrak r_t$ maximal isotropic. They are Lie subalgebras because they contain $R_t$, which contains $[\mathfrak u_t,\mathfrak u_t]$. On their exponential subgroups, the character is $\exp X\mapsto\psi^{-1}(\tr(fX))$, again denoted by $\chi_f$. The commutator pairs $\mathfrak l_t/(\mathfrak l_t\cap\mathfrak r_t)$ with $\mathfrak r_t/(\mathfrak l_t\cap\mathfrak r_t)$ nondegenerately. Summing against this pairing gives an intertwining operator between the two induced representations. The analogous sum in the opposite direction gives an intertwining operator. The composition of these operators is multiplication by the size of either quotient. This scalar is nonzero. The same finite Fourier calculation as in (\ref{finiteJe}) therefore gives
\begin{equation}\label{polarization}
  \Ind_{\exp\mathfrak l_t}^{\GL_D(k)}\chi_f \simeq\Ind_{\exp\mathfrak r_t}^{\GL_D(k)}\chi_f.
\end{equation}

Let $0=t_0<\cdots<t_r=1$ list the endpoints and all values of $t$ for which a matrix unit of nonzero $z$-degree has $s_t$-degree one. Each such matrix unit reaches degree one at most once. There are finitely many matrix units, so this list is finite. Fix two successive values $a:=t_j$ and $b:=t_{j+1}$. Then $\mathfrak r_a\subseteq\mathfrak l_b$. Indeed, a vector of degree one with positive $z$-degree moves into $\mathfrak v_b$. A vector of $\mathfrak v_a$ can leave $\mathfrak v_b$ only by reaching degree one at $b$. Its $z$-degree is then negative, so it lies in $\mathfrak l_b$. A vector of $\mathfrak g^f\cap\mathfrak w_a$ has positive $z$-degree when $a>0$, since its $h$-degree is nonpositive. It remains in $R_b$. This intersection is zero at $a=0$. Finally, $\mathfrak m$ belongs to every $\mathfrak l_b$.

For subgroups $B_1\subseteq B_2\subseteq\GL_D(k)$, let $\chi$ be a character of $B_2$. For $F\in\Ind_{B_1}^{\GL_D(k)}(\chi|_{B_1})$, put $(\mathcal A F)(g):= \sum_{b\in B_1\backslash B_2}\chi(b)^{-1}F(bg)$. The summand is independent of the chosen representative of $B_1b$. Replacing $b$ by $bb_0$ gives $(\mathcal A F)(b_0g)=\chi(b_0)(\mathcal A F)(g)$ for $b_0\in B_2$. Thus $\mathcal A$ maps to $\Ind_{B_2}^{\GL_D(k)}\chi$. On that subspace it equals multiplication by $[B_2:B_1]$. This scalar is nonzero, so $\mathcal A$ is surjective.

We combine these surjections with (\ref{polarization}) for $j=0,\ldots,r-1$. At $t=0$, the grading is $h$. By the definition of $\Gamma_\lambda$, the induced representations in (\ref{polarization}) at $t=0$ are isomorphic to $\Gamma_\lambda$. This gives a surjection
\[
  \Gamma_\lambda\longrightarrow \Ind_{P_s}^{\GL_D(k)}\chi_f.
\]
The induced representations for different maximal isotropic choices at either endpoint are identified by the same finite Fourier argument. If $f^n\ne0$, then $\lambda_1>n$, so $\lambda\not\leq(n^c)$. Applying $\Hom_{\GL_D(k)}(-,\Delta(\tau,c))$ to this surjection gives
\[
  \Hom_{\GL_D(k)} \left(\Ind_{P_s}^{\GL_D(k)}\chi_f,\Delta(\tau,c)\right) \lhook\joinrel\longrightarrow \Hom_{\GL_D(k)}(\Gamma_\lambda,\Delta(\tau,c))=0.
\]
Frobenius reciprocity identifies the source with the $\chi_f$-isotypic subspace of $\Delta(\tau,c)$. Averaging identifies that subspace with $J_{P_s,\chi_f}\Delta(\tau,c)$. This proves the vanishing. The interpolation between gradings follows Lemmas~3.2.1 and 3.2.6--3.2.7 of \cite{GGS17}. All averaging maps used here are over finite groups.
\end{proof}

For a character supported on adjacent blocks of a standard unipotent radical, we give successive blocks labels decreasing by two. Let $f$ represent this character under the trace pairing. Its nonzero blocks lie on the opposite lower adjacent blocks. Then $f$ has degree $-2$. Also $\mathfrak g_1^s=0$, and $\exp\mathfrak g_{\geq2}^s$ is that radical. Lemma~\ref{compare} therefore applies directly to every character supported on adjacent blocks used in Section~\ref{Fq}.

\end{document}